\documentclass[letterpaper,12pt]{amsart}

\usepackage{tkz-euclide}
\usepackage{mathrsfs}

\usepackage[abbrev]{amsrefs}
\usepackage{amssymb,amsmath,amsthm}
\usepackage{color}
\usepackage[xcolor=red]{changes}

\usepackage{enumerate}
\usepackage{ulem,ifthen,xcolor,xkeyval,pdfcolmk}

\usepackage{graphicx}

\makeatletter
\@namedef{Changes@AuthorColor}{red}
\colorlet{Changes@Color}{red}
\makeatother

\definechangesauthor[color=blue]{nc}
\definechangesauthor[color=red]{zl}

\newtheorem{prop}{Proposition}[section]
\newtheorem{thm}{Theorem}[section]

\newtheorem{lem}{Lemma}[section]

\newtheorem{remark}{Remark}
\newtheorem{corl}{Corollary}[section]

\newtheorem{conj}{Conjecture}

\newcommand{\eps}{\varepsilon}

\def \<{\langle}
\def \>{\rangle}

\def \R{\mathbb R}
\def \C{\mathbb C}

\def \H{{\cal H}}

\def \H^0{{\cal H}^0 or}

\def \p{\partial}
\def \n{\nabla}
\def \beq{\begin{equation}}
\def \eeq{\end{equation}}

\def \n{\nabla}

\def \eref{\eqref}

\setlength{\topmargin}{0.7in} \setlength{\textheight}{7in}
\setlength{\textwidth}{6in} \setlength{\oddsidemargin}{0.0in}
\setlength{\evensidemargin}{0.0in} 

\begin{document}



\title{$L^p$-spectral theory for the Laplacian on forms}

\author{Nelia Charalambous}
\address{Department of Mathematics and Statistics, University of Cyprus, Nicosia, 1678, Cyprus} \email[Nelia Charalambous]{nelia@ucy.ac.cy}

 \author{Zhiqin Lu} \address{Department of
Mathematics, University of California,
Irvine, Irvine, CA 92697, USA} \email[Zhiqin Lu]{zlu@uci.edu}

\thanks{The first author is partially supported by a University of Cyprus Internal Grant.
The second author is partially supported by the DMS-19-08513 }
 \date{\today}

  \subjclass[2000]{Primary: 58J50;
Secondary: 47A10}

\keywords{$L^p$-spectrum, Weyl criterion, hyperbolic space}

\begin{abstract}
 In this article, we find sufficient conditions on an open Riemannian manifold so that a Weyl criterion holds for the $L^p$-spectrum of the Laplacian on $k$-forms, and also prove the decomposition of the $L^p$-spectrum depending on the order of the forms.   We then show that the resolvent set of an operator such as the Laplacian on $L^p$   lies outside a parabola whenever the volume of the manifold has an exponential volume growth rate, removing the requirement on the manifold to be of bounded geometry. We conclude by  providing a detailed description of the $L^p$  spectrum of the Laplacian on $k$-forms over hyperbolic space.
\end{abstract}

\maketitle

\section{Introduction} \label{S1}

In this article, we consider the Laplacian on differential $k$-forms over an open (complete noncompact) Riemannian manifold $(M,g)$. On smooth differential $k$-forms, the Laplacian is given by
\[
\Delta=d\delta +\delta d
\]
where $d$ is exterior differentiation and $\delta$ is the dual operator of $d$ with respect to the $L^2$-pairing. We are interested in the spectrum of the Laplacian acting on $L^p$-integrable $k$-forms $L^p(\Lambda^k(M))$ for all $1\leq p\leq \infty$. For $p\in[1,\infty]$ one can define the Laplacian operator on $L^p(\Lambda^k(M))$ via the heat semigroup. The $L^p$-spectrum  and essential spectrum for the Laplacian on $k$-forms, $\sigma(p,k)$ and $\sigma_\textup{ess}(p,k)$ respectively, are then defined in a similar way to the $L^2$-spectrum (see Section \ref{S0} for the details). Note that if the manifold is compact, the spectrum is trivially the same for all $p$, hence studying the $L^p$-spectrum only makes sense in the noncompact case.

 The $L^p$-spectrum is often more complex and hence more interesting in comparison to the $L^2$-spectrum. One of its interesting features  is the fact that it could actually vary with $p$ over noncompact manifolds, and this variation can be controlled by the geometric properties of the manifold.  The classic examples are hyperbolic space, Kleinian group and certain conformally compact manifolds, where the $L^p$-spectrum of the Laplacian on functions,  $\sigma_\textup{ess}(p,0)$, is
a parabolic region of the complex plane for all $p\neq 2$ and reduces to  an  unbounded  interval of the real line for $p=2$ ~\cites{ChRo,LR,DST}.

On the other hand, $\sigma (p,k)$ was proved to be independent of $p$ on manifolds with uniformly subexponential volume growth and whose Ricci curvature and curvature tensor on $k$-forms are bounded below (see ~\cites{HV1,HV2,sturm} for the case $k=0$ who consider both the Laplacian and Schr\"odinger operators, and ~\cite{CharJFA} for all other $k$).

Our first goal in this article is to study the Weyl criterion  for the $L^p$-spectrum. Surprisingly such a criterion is not known.
The classical Weyl criterion states that a point $\lambda$ will belong to the $L^2$-spectrum of the Laplacian if and only if we can find a sequence of approximate $L^2$-eigenforms corresponding to $\lambda$. This criterion does not generalize to the $L^p$-spectrum, in other words it is not clear that we can always find a sequence of approximate $L^p$ eigenforms whenever $\lambda$ is in the $L^p$-spectrum  (see Theorem \ref{Weyl})\footnote{In general very little is known for the spectrum of operators over a Banach space. For a discussion on how the residual and point spectrum are related see \cite{RSI}*{Proposition p. 194}.}.  Our first goal is to find sufficient conditions on the manifold so that the $L^p$-Weyl criterion holds for the Laplacian on forms. In Section~\ref{S0}, Theorem \ref{thm2}, using heat kernel bounds, we prove that for $p\geq 2$ the $L^p$-Weyl criterion holds whenever the Ricci curvature and the Weitzenb\"ock tensor are bounded below, and the volume of balls of radius $1$ is uniformly bounded below  by a positive constant. In the same section we also prove that a point in the $L^p$-spectrum on $k$-forms must come from either the $(k+1)$ or the $(k-1)$-form spectrum  (Proposition \ref{kspec}). We achieve these results by proving  precise upper Gaussian estimates for the heat kernel on forms and  its first order derivatives,  as well as pointwise and integral estimates for the resolvent operator and its derivatives. These are found in Section \ref{S2}.

Our next goal is to generalize the set of manifolds over which the $L^p$-spectrum of the Laplacian on $k$-forms is contained in a parabolic
region. First, we prove  the finite propagation speed of the wave kernel for the Laplacian on forms (Theorem \ref{thmFP}).
 Then in Section \ref{S4} we  show that the resolvent set of an operator such as the Laplacian on $L^p$ lies outside a parabola whenever the volume of the manifold has exponential volume growth rate (see Section \ref{S4} for the precise definition).
\begin{thm} \label{thm3}
We consider a complete manifold $M$ over which the Ricci curvature and the Weitzenb\"ock tensor on $k$-forms are bounded below.  Denote by $\gamma$  the exponential rate of volume growth of $M$ and $\lambda_1$   the infimum of the spectrum of the Laplacian on $L^2$ integrable $k$-forms.     Let $1\leq p\leq \infty$, and $z$ be a complex number such that
\[
|{\rm Im}(z)|>\gamma\left|\frac 1p-\frac 12\right|.
\]
 Then,  for any real number $c$ satisfying $c\leq\lambda_1$, the operator
$
(H-z^2)^{-1}
$
is  bounded on $L^p(\Lambda^k(M))$, where
\[
H=\Delta-c
\]
is a nonnegstive operator in $L^2(\Lambda^k(M))$.
\end{thm}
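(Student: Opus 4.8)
The plan is to reduce matters to a complex-time estimate for the heat semigroup of $H$ and then recover the resolvent as a rotated Laplace-transform (contour) integral. First the reductions: since $c\le\lambda_{1}$, the operator $H=\Delta-c$ is nonnegative and self-adjoint on $L^{2}(\Lambda^{k}(M))$, so $(H-z^{2})^{-1}$ is bounded on $L^{2}$ whenever $z^{2}\notin[0,\infty)$, in particular whenever $\operatorname{Im}z\ne 0$; this is the case $p=2$. Since $(H-z^{2})^{*}=H-\bar z^{2}$ and the region $\{\,|\operatorname{Im}z|>\gamma|\tfrac1p-\tfrac12|\,\}$ is symmetric under $p\mapsto p'$, it suffices to treat $1\le p<2$, and if convenient one may interpolate with the $L^{2}$ bound.

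The key estimate I would prove is the holomorphic semigroup bound
\[
\|e^{-wH}\|_{L^{p}\to L^{p}}\ \le\ C\,(1+|w|)^{N}\exp\!\Big(\gamma^{2}\big(\tfrac1p-\tfrac12\big)^{2}\,\tfrac{|w|^{2}}{\operatorname{Re}w}\Big),\qquad \operatorname{Re}w>0 .
\]
For $p=2$ this is just $\|e^{-wH}\|_{2\to2}\le1$. For general $p$ it should follow from the precise Gaussian heat-kernel (and first-order derivative) estimates of Section~\ref{S2} together with the exponential volume growth $\operatorname{vol}B(x,r)\lesssim e^{\gamma r}$, via Davies' exponential-weight technique: an energy argument using the finite propagation speed of Theorem~\ref{thmFP} (and the coercivity guaranteed by $c\le\lambda_{1}$) gives the weighted $L^{2}$ bound $\|e^{b\varphi}e^{-wH}e^{-b\varphi}\|_{2\to2}\le C(1+|w|)^{N}e^{b^{2}|w|^{2}/\operatorname{Re}w}$ for every $1$-Lipschitz $\varphi$, and one feeds this into a Hölder argument with the weight gradient $b$ taken equal to $\gamma(\tfrac1p-\tfrac12)$ — the threshold at which the exponential weight $e^{-b\varphi}$ just lies in the exponent space $L^{r}$, $\tfrac1r=\tfrac1p-\tfrac12$, dictated by the volume growth. (On hyperbolic space one computes $\|e^{-tH}\|_{p\to p}=e^{[\,c-\lambda_{1}+\gamma^{2}(1/p-1/2)^{2}\,]t}$ exactly, so the constant above is sharp.)

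Granting this, fix $z$ with $\operatorname{Im}z>0$ and put $\alpha=\gamma|\tfrac1p-\tfrac12|$. For $\psi\in(-\pi/2,\pi/2)$, a contour rotation (Cauchy's theorem, using analyticity of $w\mapsto e^{-wH}$ on the right half-plane and the decay just recorded) gives
\[
(H-z^{2})^{-1}\ =\ e^{i\psi}\!\int_{0}^{\infty}e^{-s e^{i\psi}H}\,e^{s e^{i\psi}z^{2}}\,ds
\]
whenever the right side converges in $L^{p}$-operator norm; by the semigroup bound its integrand has norm $\le C(1+s)^{N}\exp\!\big(s[\alpha^{2}\sec\psi+\operatorname{Re}(e^{i\psi}z^{2})]\big)$, so convergence holds as soon as $\alpha^{2}\sec\psi+\operatorname{Re}(e^{i\psi}z^{2})<0$. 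Such a $\psi$ exists precisely when $\inf_{|\psi|<\pi/2}\big(\alpha^{2}\sec\psi+\operatorname{Re}(e^{i\psi}z^{2})\big)<0$; multiplying through by $\cos\psi$ and using $\cos\psi\cos(\psi+\arg(z^{2}))=\tfrac12[\cos(\arg z^{2})+\cos(2\psi+\arg z^{2})]$, this is equivalent to $2\alpha^{2}<|z|^{2}-\operatorname{Re}(z^{2})=2(\operatorname{Im}z)^{2}$, i.e.\ to $|\operatorname{Im}z|>\alpha$ — exactly the hypothesis. Hence $(H-z^{2})^{-1}$ is bounded on $L^{p}$ throughout the stated region for $1\le p<2$, and by duality for $2<p\le\infty$. (One can also argue directly with the resolvent kernel: the weighted $L^{2}$ estimate plus Sobolev regularity on unit balls, whose geometry is controlled by the Ricci lower bound, give $|G_{z}(x,y)|\lesssim(\operatorname{vol}B(x,1)\,\operatorname{vol}B(y,1))^{-1/2}e^{-s\,d(x,y)}$ for $d(x,y)\ge1$ and any $s<\operatorname{Im}z$, the near-diagonal part being a standard, locally $L^{p}$-bounded second-order resolvent, after which a Schur/interpolation estimate against the volume growth finishes the proof.)

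The hard part is the \emph{sharp} constant. Cheap routes lose a factor: Riesz–Thorin interpolation of the $L^{1}$ and $L^{2}$ heat bounds yields only $\gamma^{2}(\tfrac1p-\tfrac12)$, not its square, in the exponent — hence only $|\operatorname{Im}z|>\gamma\sqrt{2|\tfrac1p-\tfrac12|}$ after the contour step — and a crude Schur estimate of the resolvent kernel against $e^{\gamma r}$ gives only $|\operatorname{Im}z|>\gamma$, with a spurious dependence on the Ricci bound if the ball-counting is done carelessly. Extracting the correct quadratic dependence $\gamma^{2}(\tfrac1p-\tfrac12)^{2}$, equivalently the threshold $\gamma|\tfrac1p-\tfrac12|$, requires using the full strength of the Section~\ref{S2} estimates — the $\operatorname{vol}B(x,\sqrt t)$-normalized heat kernel bounds and the derivative bounds — inside Davies' weighted argument, organised so that the Hölder exponent $|\tfrac1p-\tfrac12|$ multiplies the volume-growth rate $\gamma$ within a single exponential rather than combining additively with it. Finite propagation speed (Theorem~\ref{thmFP}) is precisely the ingredient that makes the weighted $L^{2}$ step, and hence the whole argument, go through with no bounded-geometry hypothesis on $M$.
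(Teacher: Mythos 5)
Your endgame is fine: the contour--rotation identity $(H-z^{2})^{-1}=e^{i\psi}\int_{0}^{\infty}e^{-se^{i\psi}H}e^{se^{i\psi}z^{2}}\,ds$ and the optimization over $\psi$ showing that $\inf_{|\psi|<\pi/2}\bigl(\alpha^{2}\sec\psi+\operatorname{Re}(e^{i\psi}z^{2})\bigr)<0$ exactly when $|\operatorname{Im}z|>\alpha$ are correct, and this is a genuinely different organization from the paper, which never forms a complex-time semigroup: it proves the $L^{1}$ bound for the resolvent directly, writing $(H-z^{2})^{-1}(H+\xi^{2})^{-m}$ as $(-iz)^{-1}\int_{0}^{\infty}e^{izt}\cos(t\sqrt H)\,\vec g_{m,\xi}\,dt$ and estimating this kernel via finite propagation speed (Lemma \ref{lem41}), and then obtains $1<p<2$ by Stein interpolation of the resolvent family $T(x+iy)=(H+\tfrac{\gamma^{2}}{4}(x+a+iy+ib)^{2})^{-1}$ rather than of the semigroup.

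The genuine gap is your central estimate $\|e^{-wH}\|_{p\to p}\le C(1+|w|)^{N}\exp(\gamma^{2}(\tfrac1p-\tfrac12)^{2}|w|^{2}/\operatorname{Re}w)$, which you assert but do not prove, and whose sketched proof breaks at exactly the point this theorem is designed to overcome. The Davies weighted bound $\|e^{b\varphi}e^{-wH}e^{-b\varphi}\|_{2\to2}\le e^{b^{2}|w|^{2}/\operatorname{Re}w}$ is fine, but ``feeding it into a H\"older argument'' to close at $L^{p}\to L^{p}$ (or even $L^{1}\to L^{1}$) requires controlling $\|f\|_{L^{2}}$ by $\|f\|_{L^{p}}$ on unit balls, i.e.\ an $L^{p}\to L^{2}$ smoothing step; without a uniform lower bound on $\operatorname{vol}(B_{x}(1))$ (which Theorem \ref{thm3}, unlike Theorem \ref{thm2}, does not assume) this forces you to insert a factor such as $(H+\xi^{2})^{-m}$ and to track the $\operatorname{vol}(B_{y}(1))^{-1/2}$ normalizations until they cancel against the annulus volumes $\operatorname{vol}(A_{R})\lesssim\phi(y)^{-2}e^{(\gamma+\eps)R}$ --- this cancellation is the actual content of Corollary \ref{corl32} and Lemma \ref{lem41}, and you would also then need a resolvent identity such as \eqref{5_e2} to remove the inserted factor. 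Moreover, you correctly observe that Riesz--Thorin between $p=1$ and $p=2$ loses the quadratic exponent, but you never supply the replacement: the direct Davies argument at exponent $p\in(1,2)$ with constant $\gamma^{2}(\tfrac1p-\tfrac12)^{2}$ is not written down, and the paper's actual mechanism for recovering the sharp quadratic dependence is not a sharp $L^{p}$ heat bound at all but Stein interpolation applied to the quadratically parametrized resolvent family after only the $p=1$ and $p=2$ endpoints are established. As written, your proof therefore rests on an unproved estimate whose proof would have to reproduce most of the paper's Sections \ref{S2} and \ref{S4}.
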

Our result generalizes that of Taylor \cite{tay} in that it removes the requirement that the manifold should be of bounded geometry. In particular, we no longer require that the injectivity radius of the manifold be bounded below. We achieve this through our precise estimates for the resolvent kernel and by using the resolvent to effectively replace the parametrix method in \cite{tay}  for the wave kernel.

 We visualize the result in Theorem~\ref{thm3} with the picture below, where the set outside the shaded parabolic region represents the set of complex numbers $w$ for which the resolvent $(H-w)^{-1}$ is bounded in $L^p$.  Note that a parabolic region is always contained in a cone with vertex on the real line. It is worth mentioning that in \cites{LiPe} it was proved that the $L^p$-spectrum is always contained in a cone which depends only on $p$ and in \cite{We} Weber provides an example of a surface where the $L^p$-spectrum is the maximal cone. However, upon careful examination of their results, we see that in the case $p=1$, the cone degenerates into a half-plane. Our results imply that under the assumption of a lower bound for the Ricci and Weizenb\"ock tensor, there exists a strict cone that contains the spectrum, confirming the following conjecture in this special case. \\

\begin{center}
\begin{tikzpicture}[scale=.6]

  \draw[black, line width = .3mm, fill=gray!30!white ]   plot[smooth, domain=-1.7:1.7] (2* \x * \x-1, \x);

  \draw[black, line width = .3mm, ]   plot[smooth, domain=0:2.7] ( 2.5*\x-2 , \x);
  \draw[black, line width = .3mm, ]   plot[smooth, domain=0:2.7] ( 2.5*\x-2 , -\x);

  \draw[black,   thick, ->] (-5, 0) -- (6, 0) node[below]{$x$};
  \draw[black,  thick, ->] (0, -3) -- (0, 3) node[left]{$y$};
\end{tikzpicture}
\end{center}

\begin{conj}
Let $M$ be any complete noncompact Riemannian manifold with the exponential rate of volume growth $\gamma<\infty$ (see Section ~\ref{S4} for the definition).
Assume that the Weitzenb\"ock curvature on $k$-forms has a lower bound for some $0\leq k\leq n$.
Then there exist two real numbers $c\leq 0$ and $a>0$, such that  $(\Delta-w)^{-1}$ is bounded on $L^p (\Lambda^k(M))$  for any complex number $w$ which satisfies
\[
{\rm Re} (w)\leq a|{\rm Im} (w)|-c.
\]
\end{conj}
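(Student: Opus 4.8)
The plan is to establish the conjecture in the case to which our standing hypotheses apply — that is, when the Ricci curvature is \emph{also} bounded from below — by deriving it from Theorem~\ref{thm3}; the general statement, with only a lower Weitzenb\"ock bound, lies beyond the present methods, for the reason indicated at the end.

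Fix $p\in[1,\infty]$ and set $\beta=\gamma\,\bigl|\frac1p-\frac12\bigr|$. Choose any $c_0\le\lambda_1$ — sharpest is $c_0=\lambda_1$ — so that $H=\Delta-c_0$ is nonnegative on $L^2(\Lambda^k(M))$. By Theorem~\ref{thm3}, $(\Delta-w)^{-1}=(H-z^2)^{-1}$ is bounded on $L^p(\Lambda^k(M))$ for every $w$ of the form $w=c_0+z^2$ with $|{\rm Im}(z)|>\beta$, and the first step is to describe this family of $w$. Writing $z=x+iy$ yields ${\rm Re}(w)=c_0+x^2-y^2$ and ${\rm Im}(w)=2xy$; fixing the value $v={\rm Im}(w)$ and a $y$ with $|y|>\beta$ forces $x=v/(2y)$ and ${\rm Re}(w)=c_0-y^2+v^2/(4y^2)$, which is strictly decreasing in $y^2$ on $(\beta^2,\infty)$ with supremum (not attained) $c_0-\beta^2+v^2/(4\beta^2)$. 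Hence $(\Delta-w)^{-1}$ is bounded on $L^p(\Lambda^k(M))$ for all $w$ with
\[
{\rm Re}(w)<c_0-\beta^2+\frac{{\rm Im}(w)^2}{4\beta^2},
\]
i.e. on the exterior of a rightward-opening parabola with vertex $(c_0-\beta^2,0)$ — precisely the picture above. (When $\beta=0$, i.e. $p=2$ or $\gamma=0$, this degenerates correctly to boundedness off the half-line $[c_0,\infty)$.)

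The second step is elementary: the exterior of a parabola contains a half-cone. For any $a>0$ the inequality $a|v|-c\le c_0-\beta^2+v^2/(4\beta^2)$ holds for every real $v$ once the quadratic $\frac1{4\beta^2}v^2-a|v|+(c_0-\beta^2+c)$ has nonpositive discriminant, i.e. once $c\ge\beta^2(1+a^2)-c_0$. With $c_0=\lambda_1$ and $a>0$ small, any such $c$ then gives boundedness of $(\Delta-w)^{-1}$ on $\{{\rm Re}(w)\le a|{\rm Im}(w)|-c\}$; one may take $c\le0$, as in the conjecture, as soon as $\lambda_1>\beta^2=\gamma^2(\frac1p-\frac12)^2$ (in particular for $p$ not too far from $2$), and in all cases the $L^p$-spectrum of $\Delta$ on $k$-forms is confined to a \emph{strict} cone — the substance of the statement. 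The remaining bookkeeping, namely the explicit dependence of $a$ and $c$ on $\gamma$, $p$, $\lambda_1$ and the borderline exponents, is routine.

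The \emph{main obstacle} — and the reason the full conjecture is out of reach by this route — is that Theorem~\ref{thm3} rests on the Gaussian heat-kernel and resolvent-kernel estimates for forms of Section~\ref{S2} and on the finite propagation speed of the wave kernel on forms, each of which uses the lower Ricci bound essentially. A proof of the conjecture in general would have to control the propagation of heat and waves on $k$-forms from the lower Weitzenb\"ock bound and the exponential volume growth rate alone — exactly the analytic input current techniques do not provide — and this is where genuinely new ideas are needed.
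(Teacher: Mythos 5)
This statement is a conjecture: the paper gives no proof of it, only the remark in the preceding paragraph that Theorem~\ref{thm3} confirms it in the special case where the Ricci curvature is also bounded below. Your proposal does exactly that, supplying the elementary geometry the paper leaves implicit, and it is correct: the image of $\{|{\rm Im}\,z|>\beta\}$ under $z\mapsto c_0+z^2$ is the open region $\{{\rm Re}(w)<c_0-\beta^2+{\rm Im}(w)^2/(4\beta^2)\}$, and your discriminant computation shows this contains $\{{\rm Re}(w)\le a|{\rm Im}(w)|-c\}$ once $c>\beta^2(1+a^2)-c_0$ (take the discriminant strictly negative, since only the \emph{open} exterior of the parabola has been shown to lie in the resolvent set). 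You are also right to flag the sign condition as the one point of friction with the literal statement: $c\le 0$ forces $\lambda_1\ge\beta^2(1+a^2)$, and this genuinely fails in examples the paper itself computes --- for $\mathbb H^{N+1}$, $k=0$, $p=1$ one has $\lambda_1=\beta^2=N^2/4$ and $0\in\sigma(1,0)$, so no cone with vertex on the nonnegative real axis can work; the figure accompanying the conjecture (whose cone has vertex at a negative abscissa) indicates the restriction $c\le 0$ is not meant literally, and the substance is the strict cone, which your argument does establish. Your closing paragraph correctly identifies why the general case (Weitzenb\"ock bound only) is beyond the paper's methods, which is precisely why it is stated as a conjecture rather than a theorem.
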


\vspace{.1in}

 In other words, if we drop the condition on the lower bound for the Ricci curvature, we believe that a generalization of Theorem \ref{thm3} should hold, with the spectrum contained in a strict sector rather than a parabola. In particular, if $k=0$ there is no condition on curvature, and the result becomes conformally invariant when the conformal factor is bounded. \\

In Theorem \ref{thm41-2} we extend the result  of Theorem~\ref{thm3} to a more general class of operators that are functions of the Laplacian.\\

 We conclude by providing a detailed description of the $L^p$-spectrum for $k$-forms over hyperbolic space $\mathbb H^{N+1}$, using Theorem~\ref{thm3} and Proposition \ref{kspec}. Let \begin{equation}\label{m-1-1}
 Q_{p,k} =\left\{ \left(\tfrac N2-k\right)^2+z^2 \, \Big| \; z\in \mathbb{C} \ \ \text{with} \ \ |\textup{Im}\, z|\leq \left|\tfrac Np -\tfrac N2\right| \right\}
\end{equation}
for $k\leq (N+1)/2$ and $1\leq p\leq\infty$, and let $Q_{p,k}=Q_{p,N+1-k}$ if $k>(N+1)/2$.

\begin{thm} \label{thm1}
The   spectrum  $\sigma(p,k)$ of the Laplacian on  $L^p$-integrable $k$-forms over the hyperbolic space $\mathbb{H}^{N+1}$  is given by
\[
\begin{split}
&\sigma(p,k)=Q_{p,k},   \qquad \qquad \qquad \,{\rm for}  \ \ 0\leq k \leq N/2; \qquad {\rm and} \\
&\sigma(p,\tfrac{N+1}{2})=Q_{p, \tfrac{N+1}{2}}\cup\{0\}, \quad {\rm if\,\, } N\,\, {\rm is \,\,odd}.
\end{split}
\]
 For $(N+1)/2\leq k \leq N+1$,
 $\sigma (p,k)=\sigma (p,N+1-k)$.

Moreover, for $p>2$  every point in the interior of the parabola $Q_{p,k}$ in \eref{m-1-1} is an eigenvalue for the Laplacian on  $L^p(\Lambda^k(\mathbb{H}^{N+1}))$, whereas for $1\leq p\leq 2$, none of the points in $Q_{p,k}$ is an eigenvalue.
\end{thm}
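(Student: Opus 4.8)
The proof of Theorem~\ref{thm1} will proceed by combining three ingredients that have been prepared in the earlier parts of the paper: (i) the containment of $\sigma(p,k)$ in a parabolic region coming from Theorem~\ref{thm3}, (ii) the reduction of the $k$-form spectrum to the $(k\pm 1)$-form spectrum via Proposition~\ref{kspec}, and (iii) explicit harmonic analysis on $\mathbb H^{N+1}$. First I would recall the Hodge decomposition on $L^2(\Lambda^k(\mathbb H^{N+1}))$: every $k$-form spectral value must arise either from an exact form $d\alpha$ with $\alpha$ a $(k-1)$-form, or a coexact form $\delta\beta$ with $\beta$ a $(k+1)$-form (plus possibly a harmonic $L^2$ contribution, which on $\mathbb H^{N+1}$ is nonzero only in the middle degree when $N$ is odd and accounts for the extra $\{0\}$). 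On exact/coexact forms the Laplacian intertwines with the Laplacian on the potential, so $\Delta d = d\Delta$ and $\Delta\delta=\delta\Delta$, which is the analytic content underlying Proposition~\ref{kspec}; this lets me bootstrap from the function case $k=0$.

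\medskip

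\textbf{Upper bound for the spectrum.} For the inclusion $\sigma(p,k)\subseteq Q_{p,k}$, I would first identify the bottom of the $L^2$-spectrum of the Laplacian on coclosed (equivalently, the relevant component of) $k$-forms over $\mathbb H^{N+1}$; by Donnelly's computation this is $\lambda_1 = \left(\tfrac N2-k\right)^2$ for $k\le N/2$ (with the obvious symmetry $k\leftrightarrow N+1-k$). The exponential volume growth rate of $\mathbb H^{N+1}$ is $\gamma = N$. Plugging $c=\lambda_1=\left(\tfrac N2-k\right)^2$ into Theorem~\ref{thm3}, the operator $(\Delta-\lambda_1-z^2)^{-1}$ is bounded on $L^p(\Lambda^k)$ whenever $|\mathrm{Im}\,z|>N\left|\tfrac1p-\tfrac12\right| = \left|\tfrac Np-\tfrac N2\right|$, which says exactly that the complement of $Q_{p,k}$ lies in the resolvent set, i.e. $\sigma(p,k)\subseteq Q_{p,k}$. (One must check that the Weitzenböck tensor on $k$-forms over $\mathbb H^{N+1}$ is indeed bounded below — this is immediate since the curvature is constant.)

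\medskip

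\textbf{Lower bound and the eigenvalue statement.} For the reverse inclusion and the statement about eigenvalues, I would construct explicit $L^p$-eigenforms. On functions ($k=0$) the relevant objects are the generalized spherical functions / radial eigenfunctions $\varphi_z$ on $\mathbb H^{N+1}$, which for $\mathrm{Re}$ of the spectral parameter in the appropriate strip decay like $e^{-(N/2)r}$ times an oscillatory factor $e^{\pm i(\mathrm{Im}\,z)r}$; such a function lies in $L^p$ precisely when $N/2 - |\mathrm{Im}\,z| > N/p$, i.e. when $\left(\tfrac N2\right)^2+z^2$ is in the interior of $Q_{p,0}$, and for $p>2$ this gives a genuine $L^p$-eigenfunction while for $p\le2$ the threshold is not met (boundary points fail too, since $e^{-(N/2)r}$ is not in $L^2(\mathbb H^{N+1})$ against the volume element $\sim e^{Nr}$). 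For general $k$ I would either apply $d$ or $\delta$ to the scalar eigenforms after twisting by the appropriate bundle-valued spherical function — using the homogeneity of $\mathbb H^{N+1}$ to reduce the $k$-form Laplacian on coexact forms to a scalar Schrödinger-type operator on the half-line whose potential shifts the effective bottom from $\left(\tfrac N2\right)^2$ to $\left(\tfrac N2-k\right)^2$ — and check $L^p$-membership of the resulting forms. The harmonic $L^2$ $k$-form in the middle degree (for $N$ odd) is in every $L^p$ by Dodziuk's / Donnelly's explicit description, contributing the isolated point $0$ to $\sigma(p,\tfrac{N+1}{2})$.

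\medskip

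\textbf{Main obstacle.} The genuinely technical step is the construction and $L^p$-estimation of the bundle-valued eigenforms on $k$-forms: one has to diagonalize the Laplacian on coexact $k$-forms over $\mathbb H^{N+1}$ via the Helgason–Fourier transform adapted to forms (or, equivalently, carry out the $SO(N+1,1)$-representation-theoretic decomposition), determine the precise exponential decay rate of the resulting eigenforms in the radial variable, and verify that this rate crosses the $L^p$-integrability threshold exactly at the boundary of $Q_{p,k}$. The symmetry $\sigma(p,k)=\sigma(p,N+1-k)$ then follows from Hodge star duality, which is an $L^p$-isometry. Matching the interior-eigenvalue claim for $p>2$ with the non-eigenvalue claim for $p\le2$ amounts to tracking this single decay exponent against the volume growth, and the asymmetry is exactly the familiar one from the $k=0$ case of \cites{DST,ChRo,LR}.
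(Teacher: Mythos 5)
Your architecture for $0\leq k\leq N/2$ --- containment via Theorem \ref{thm3} with $c=(\tfrac N2-k)^2$ and $\gamma=N$, genuine $L^p$-eigenforms from the separated radial ODE for $p>2$, and duality/Hodge star for the remaining ranges --- is essentially the paper's route (the $L^p$-integrability computation you sketch is carried out in Section \ref{S7}). But there are two genuine gaps. The first is the middle degree: for $N$ odd and $k=(N+1)/2$ the infimum of the $L^2$-spectrum is $\lambda_1=0$, not $(\tfrac N2-k)^2=\tfrac14$, so Theorem \ref{thm3} cannot be invoked with $c=\tfrac14$; with the admissible $c=0$ it only confines the spectrum to the larger parabola with vertex $-N^2(\tfrac1p-\tfrac12)^2$ and says nothing about the gap between the isolated point $\{0\}$ and $Q_{p,(N+1)/2}$. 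You mention Proposition \ref{kspec} in your plan but never deploy it; the paper's Theorem \ref{thm51} is exactly the missing step: $\sigma(p,\tfrac{N+1}2)\setminus\{0\}\subset\sigma(p,\tfrac{N-1}2)\cup\sigma(p,\tfrac{N+3}2)$ together with the key coincidence $P_{p,(N+1)/2}=P_{p,(N-1)/2}=P_{p,(N+3)/2}$ of \eqref{Qhalf-2}, which is what pins the middle-degree spectrum down to $Q_{p,(N+1)/2}\cup\{0\}$. Relatedly, your assertion that the $L^2$ harmonic middle-degree form lies in every $L^p$ contradicts Theorem \ref{thm62} (such forms exist only for $p>2N/(N+1)$); the paper instead places $0$ in the spectrum for all $p$ by cutting off the formally harmonic form \eqref{halfEF} to build approximate eigenforms in $L^1$ and $L^2$ and interpolating.

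The second gap is the claim that no point of $Q_{p,k}$ is an eigenvalue for $1\leq p\leq 2$. Showing that the regular solution of the separated ODE misses the $L^p$ threshold only excludes eigenforms of the form $\phi(r)\eta$ with $\eta$ a single spherical eigenform; a putative $L^p$ eigenform is an infinite superposition of such components, and for $p\neq 2$ there is no orthogonality to conclude that each component is individually in $L^p$. The paper's argument is different and genuinely needed: ultracontractivity of the heat semigroup on $\mathbb{H}^{N+1}$ gives $L^p\to L^q$ boundedness for all $q\geq p$, so the eigenform relation $e^{-t\Delta}\omega=e^{-t\lambda}\omega$ forces any $L^p$ eigenform with $p\leq 2$ into $L^2$, where Donnelly's theorem (no $L^2$ eigenvalues apart from $0$ in the middle degree) yields the contradiction. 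Without some such bootstrapping device, the non-existence half of the ``Moreover'' statement is not proved.
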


 We also prove the following interesting result for harmonic forms.
\begin{thm} \label{thm62}
Let $N$ be a positive odd integer. Then an $L^p$-integrable harmonic $(N+1)/2$-form exists over $\mathbb{H}^{N+1}$ if and only if
\[
p>\frac{2N}{N+1}.
\]
\end{thm}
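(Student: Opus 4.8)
The plan is to characterize $L^p$-integrable harmonic $(N+1)/2$-forms on $\mathbb{H}^{N+1}$ using the explicit description of the $L^p$-spectrum already established. First I would recall that Theorem \ref{thm1} tells us, for $N$ odd and $k=(N+1)/2$, that $0\in\sigma(p,k)$, where in fact $0$ appears as an isolated point together with the parabolic region $Q_{p,k}$; since $\left(\tfrac N2 - k\right)^2 = \tfrac14 \neq 0$, the point $0$ lies outside the closure of $Q_{p,k}$ when $p\neq 2$, so $0$ is genuinely isolated in the spectrum (and for $p=2$ the gap $(N/2-k)^2 = 1/4 > 0$ also separates $0$ from the continuous part). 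The key reduction: the existence of a nonzero $L^p$-harmonic form is equivalent to $0$ being an eigenvalue of the Laplacian on $L^p(\Lambda^{(N+1)/2})$. So Theorem \ref{thm62} amounts to pinning down exactly for which $p$ the isolated spectral point $0$ is an eigenvalue versus merely residual/continuous spectrum. The eigenvalue statement in Theorem \ref{thm1} addresses the interior of $Q_{p,k}$ but not $0$ itself, so this requires a separate argument.

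The core of the proof should be a direct construction/estimation using the explicit geometry of $\mathbb{H}^{N+1}$. I would work in geodesic polar coordinates $ds^2 = dr^2 + \sinh^2 r\, d\theta^2$ on $S^N$, and use the standard decomposition of middle-degree forms. On $\mathbb{H}^{N+1}$ a natural closed and coclosed $(N+1)/2$-form can be built from the volume form of totally geodesic copies of $\mathbb{H}^{(N+1)/2}$ or, more concretely, one expects the relevant harmonic form to be (a multiple of) $d$ applied to a radial $(N-1)/2$-form, or expressible via the function $r\mapsto (\sinh r)^{-something}$ times a fixed form on the sphere factor. The precise computation: a harmonic middle form decomposes into components whose radial profiles solve a second-order ODE, the $L^2$-analysis of which is classical (Dodziuk, Donnelly), giving a one-dimensional space of harmonic $L^2$ middle forms when $N$ is odd. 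The task is then to compute the $L^p$-norm $\int_0^\infty |f(r)|^p (\sinh r)^{N}\,dr$ (with the appropriate pointwise norm weight, which for a middle form on the sphere of radius $\sinh r$ contributes a factor $(\sinh r)^{-p(N+1)/2 \cdot (\text{something})}$, combining to an effective power) and determine for which $p$ this integral converges at $r\to\infty$. Tracking the asymptotics $f(r)\sim e^{-(N+1)r/2}$ (the decay rate forced by harmonicity and $L^2$-membership) against the volume growth $e^{Nr}$ and the form-norm rescaling yields convergence iff the exponent is negative, which I expect to reduce precisely to $p > \tfrac{2N}{N+1}$.

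The main obstacle, and where care is needed, is getting the weight bookkeeping exactly right: a $(N+1)/2$-form has components that are $j$-forms on the $r=\text{const}$ sphere wedged with $dr^0$ or $dr^1$, and the pointwise norm of such a component in the hyperbolic metric scales like $(\sinh r)^{-j}$ or $(\sinh r)^{-(j-1)}$ times the coefficient, so the effective $L^p$ integrand is $|\text{coeff}(r)|^p (\sinh r)^{N - pj}$ (or similar). One must identify which component the harmonic $L^2$ form actually occupies — by the middle-degree symmetry and self-duality considerations it should be the component with $j=(N+1)/2$ or its Hodge dual — and then the critical exponent emerges from $N - p(N-1)/2 - \tfrac{p(N+1)}{2} < 0$ type inequality, i.e. $N < pN$... which is not quite it, so the honest computation of the decay rate $e^{-(N+1)r/2}$ versus the combined weight is essential. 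I would also need to rule out the borderline case $p = \tfrac{2N}{N+1}$ (logarithmic divergence or exact power) — this should come out as non-integrable, giving the strict inequality. Finally, I would invoke that no \emph{other} harmonic forms exist beyond scalar multiples of this one, using the $L^2$ uniqueness result together with the observation that an $L^p$ harmonic form which decays must in fact be the $L^2$ one (or handle $p<2$ and $p>2$ by interpolation/duality as appropriate), so that the existence question is entirely governed by the $L^p$-norm of the single explicit form.
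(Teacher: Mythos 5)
Your existence direction ($p>\tfrac{2N}{N+1}$) is essentially the paper's argument: write down an explicit closed and co-closed middle-degree form in polar coordinates, track the pointwise decay $|\omega|\sim e^{-(N+1)r/2}$ against the volume growth $e^{Nr}$, and read off convergence of $\int_0^\infty |\omega|^p(\sinh r)^N\,dr$ exactly when $p>\tfrac{2N}{N+1}$. Your reduction of the $p<2$ case to the $L^2$ case via the heat semigroup (ultracontractivity plus $e^{-t\Delta}\omega=\omega$) is also the paper's route.

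The genuine gap is in the non-existence direction, and it stems from a false premise: the space of $L^2$-harmonic $(N+1)/2$-forms on $\mathbb{H}^{N+1}$ is \emph{infinite}-dimensional, not one-dimensional. By Donnelly's classification (which the paper quotes), every $L^2$-harmonic middle form is $\omega=\sum_k a_k\omega_k+\sum_k b_k\omega_k'$, where $\omega_k$ is built from the $k$-th co-closed eigenform $\eta_k$ of $S^N$ and the radial profile $w_k(r)=(\tanh\tfrac r2)^{\sqrt{\lambda_k}-1/2}/\cosh\tfrac r2$. Consequently it is not enough to check that the single explicit form (or even each $\omega_k$ individually) fails to be $L^p$ for $p\le\tfrac{2N}{N+1}$: for $p\neq 2$ the $L^p$-norm does not decompose along the spherical harmonics, so an infinite combination could in principle be $L^p$ even though no summand is. The paper closes this by a weighted duality argument you do not supply: pair $\omega$ against $d_S\eta_\ell$ with weight $\tfrac{1}{1+r}$ in the product metric on $[1,\infty)\times S^N$; orthogonality on the sphere isolates the coefficient $a_\ell$, H\"older with the weight $f^{(\frac{N+1}{2}-\frac Np)q}$ (bounded precisely because $p\le\tfrac{2N}{N+1}$) shows the pairing is finite whenever $\omega\in L^p$, while the explicit value $a_\ell\sqrt{2\lambda_\ell}\int_1^\infty(\tanh\tfrac r2)^{\sqrt{\lambda_\ell}}\tfrac{1}{1+r}\,dr$ diverges unless $a_\ell=0$. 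This also settles the borderline $p=\tfrac{2N}{N+1}$, which your single-form divergence check does not. Without an argument of this kind (or some other device that rules out all infinite combinations at once), the ``only if'' half of the theorem is not proved.
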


These results are proved in Sections \ref{S3} and \ref{S7}.

 Note that when $N$ is an odd number, and if $p\leq 2N/(N+1)(<2)$, or $p\geq 2N/(N-1)$, then $0\in Q_{p, \tfrac{N+1}{2}}$; hence $\sigma(p,\tfrac{N+1}{2})=Q_{p, \tfrac{N+1}{2}}$. Combining this with the result of Theorem \ref{thm62}, we get the interesting observation that for $p\leq 2$ an $L^p$-integrable harmonic $(N+1)/2$-form exists only for the $p$ for which $\{0\}$ is an isolated point in the $L^p$-spectrum.


\vspace{0.1in}

{\bf Acknowledgement.} The authors would like to thank Michael Taylor for the clarifying remarks regarding the parametrix method.

\section{Resolvent kernel estimates} \label{S2}
We assume that $M$ is a complete Riemannian manifold of dimension $n$. Throughout
this paper, we let $0\leq k\leq n=\dim M$ be an integer.  In this section, we provide pointwise and integral estimates for the kernel of the resolvent $(\Delta+\alpha)^{-m}$, as well as its first-order derivatives, where $\alpha,m>0$  are positive numbers. Our estimates are new to the best of our knowledge. In particular, we believe that Corollary~\ref{cor211} is new and  would be useful in a broader context.

 Throughout the paper,
we use $\langle \cdot, \cdot \rangle$ to denote the inner product on vector fields and $k$-forms, and we denote the norm associated with this inner product by $|\cdot|$.  We will use  $\int_M f$ to denote the integral of a function over the Riemannian manifold without specifying the volume form if the context is clear; moreover, the constants  $C, C',C'', C_1,C_2, \ldots$ may differ from line to line even if they have the same superscript or subscript.

\begin{lem} \label{lemheat}
Let $M$ be a complete manifold of dimension $n$ with Ricci curvature bounded below ${\rm Ric}\geq -K_1$ and Weitzenb\"ock tensor on $k$-forms bounded below $\mathcal{W}_k\geq -K_2$. Then the heat kernel on $k$-forms, $\vec{h}_t$, has  the following  Gaussian upper bound
\[
|\vec{h}_t(x,y)| \leq C_1  \, {\rm vol}(B_x(\sqrt t))^{-1/2}{\rm vol}(B_y(\sqrt t))^{-1/2} e^{\sqrt{K_1 t} + K_2 t} \; e^{-\frac{d^2(x,y)}{C_2 t}},
\]
where ${\rm vol}(B_x(r))$ denotes the Riemannian volume of the geodesic ball or radius $r$ at $x$.

If in addition the  Weitzenb\"ock tensor on $(k+1)$-forms is bounded below, $\mathcal{W}_{k+1}\geq -K_2$, then
\[
|d\vec{h}_t(x,y)|\leq
C_1  \, t^{-1/2} \, {\rm vol}(B_x(\sqrt t))^{-1/2}{\rm vol}(B_y(\sqrt t))^{-1/2} e^{\sqrt{K_1 t} +2K_2 t} \; e^{-\frac{d^2(x,y)}{C_2 t}}.
\]
Similarly, if the  Weitzenb\"ock tensor on $(k-1)$-forms is bounded below,  $\mathcal{W}_{k-1}\geq -K_2$, then we have
\[
|\delta\vec{h}_t(x,y)|\leq
C_1  \, t^{-1/2} \,  {\rm vol}(B_x(\sqrt t))^{-1/2}{\rm vol}(B_y(\sqrt t))^{-1/2} e^{\sqrt{K_1 t} + 2K_2 t} \; e^{-\frac{d^2(x,y)}{C_2 t}}.
\]
In all of the above $C_1, C_2$ denote generalized constants that only depend on $n$ and possibly the curvature bounds.
\end{lem}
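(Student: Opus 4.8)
The plan is to derive the Gaussian bound on $|\vec h_t(x,y)|$ first, and then obtain the bounds on $|d\vec h_t|$ and $|\delta\vec h_t|$ by a commutation-and-interpolation trick. For the first estimate, I would begin from the Weitzenböck formula $\Delta = \nabla^*\nabla + \mathcal W_k$ on $k$-forms. The lower bound $\mathcal W_k \ge -K_2$ together with the domination of the form heat semigroup by the scalar (Schrödinger) semigroup gives the pointwise Kato-type inequality $|\vec h_t(x,y)| \le e^{K_2 t}\, p_t(x,y)$, where $p_t$ is the heat kernel of the scalar operator $-\Delta_0$ (or, more precisely, of $\nabla^*\nabla$ acting on functions via $|\nabla u|$). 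Then I would invoke the classical Li–Yau / Grigor'yan–Saloff-Coste type Gaussian upper bound for the scalar heat kernel on a manifold with $\mathrm{Ric}\ge -K_1$: namely $p_t(x,y)\le C_1\,\mathrm{vol}(B_x(\sqrt t))^{-1/2}\mathrm{vol}(B_y(\sqrt t))^{-1/2}\,e^{\sqrt{K_1 t}}\,e^{-d^2(x,y)/(C_2 t)}$, where the $e^{\sqrt{K_1 t}}$ factor is the standard correction term coming from the Ricci lower bound (it is what replaces the volume-doubling constant in the noncollapsed case). Combining the two inequalities yields the stated bound with the factor $e^{\sqrt{K_1 t}+K_2 t}$.

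For the derivative estimates, the key observation is that $d$ intertwines the Hodge Laplacian on $k$-forms with the Hodge Laplacian on $(k+1)$-forms: $d\,\Delta^{(k)} = \Delta^{(k+1)}\,d$, hence $d\vec h_t^{(k)}(x,y) = $ (the $(k+1)$-form heat kernel)$\,\circ\, d_y$ in an appropriate distributional sense, and more usefully $d\vec h_t^{(k)} = \vec h_{t/2}^{(k+1)} * d\vec h_{t/2}^{(k)}$. The standard route is: (i) use the $L^2$-analyticity of the semigroup to get $\|d e^{-t\Delta^{(k)}}\|_{2\to 2} = \|(\Delta^{(k)})^{1/2} e^{-t\Delta^{(k)}}\|_{2\to 2}^{1/2}\cdot(\dots) \lesssim t^{-1/2}e^{ct}$, which produces the $t^{-1/2}$ prefactor; (ii) use the semigroup property to split $e^{-t\Delta} = e^{-\frac t2\Delta}e^{-\frac t2\Delta}$, apply the already-established Gaussian bound on one factor together with the on-diagonal/derivative bound on the other, and track the Gaussian exponential through a Davies–Gaffney (finite-propagation or weighted $L^2$) argument to recover the off-diagonal factor $e^{-d^2(x,y)/(C_2t)}$. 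Since applying $d$ moves us to $(k+1)$-forms, one needs $\mathcal W_{k+1}\ge -K_2$ to control the intermediate $(k+1)$-form semigroup, which accounts for the extra $K_2 t$ in the exponent ($e^{\sqrt{K_1 t}+2K_2 t}$); the $\delta$ estimate is identical with $(k+1)$ replaced by $(k-1)$, using $\delta\Delta^{(k)} = \Delta^{(k-1)}\delta$.

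I expect the main obstacle to be step (ii) for the derivative bounds: cleanly propagating the \emph{Gaussian} off-diagonal decay through the gradient. The on-diagonal gradient estimate $|d\vec h_t(x,x)|\lesssim t^{-1/2}\mathrm{vol}(B_x(\sqrt t))^{-1}e^{\sqrt{K_1t}+2K_2t}$ follows fairly directly from analyticity plus the Gaussian bound on $\vec h_{t/2}$, but upgrading this to the full Gaussian requires a Davies-type exponential weighting argument (conjugating the semigroup by $e^{\alpha\rho}$ for a bounded-gradient approximation $\rho$ of the distance function, then optimizing over $\alpha$), or alternatively an integrated-maximum-principle argument à la Grigor'yan applied to $|d\vec h_t|^2$ using the Bochner formula for $d\vec h_t$ — here the Weitzenböck bound on $(k+1)$-forms is exactly what makes the relevant differential inequality work. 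The boundedness of the manifold's geometry is \emph{not} assumed, so one must be careful that all constants depend only on $n$, $K_1$, $K_2$ and not on injectivity radius; this is why the volume factors $\mathrm{vol}(B_x(\sqrt t))^{-1/2}$ appear explicitly rather than being absorbed into a uniform constant.
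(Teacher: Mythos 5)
Your first estimate is proved exactly as in the paper: semigroup domination (Hess--Schrader--Uhlenbrock) gives $|\vec h_t|\le e^{K_2t}h(t,x,y)$, and the scalar Gaussian bound of Saloff-Coste under ${\rm Ric}\ge -K_1$ supplies the rest. For the derivative bounds, however, you take a genuinely different route. The paper's argument is parabolic-PDE in flavor: since $d$ commutes with $\Delta$, the $(k+1)$-form $d\vec h_t$ solves the heat equation, and the Bochner formula on $(k+1)$-forms plus Kato's inequality show that $e^{-K_2t}|d\vec h_t|$ is a pointwise subsolution of the scalar heat equation; one then gets weighted $L^2$ bounds for $|d\vec h_t|$ from those of $\vec h_t$ and $h$ as in Cheng--Li--Yau, and converts them to pointwise Gaussian bounds by the parabolic Moser iteration of Saloff-Coste (which is available under a Ricci lower bound alone). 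Your primary route is semigroup-theoretic: the factorization $de^{-t\Delta^{(k)}}=e^{-\frac t2\Delta^{(k+1)}}\,de^{-\frac t2\Delta^{(k)}}$, the spectral bound $\|de^{-s\Delta}\|_{2\to2}\lesssim s^{-1/2}$ for the $t^{-1/2}$ prefactor, and Davies--Gaffney weighted $L^2$ off-diagonal estimates to recover the Gaussian factor. This works --- the off-diagonal $L^2$ bound for $\sqrt{s}\,de^{-s\Delta}$ follows from finite propagation speed of the Hodge--Dirac operator or a direct energy argument, and the splitting of $M$ into $\{d(\cdot,x)\ge d(x,y)/2\}\cup\{d(\cdot,y)\ge d(x,y)/2\}$ closes the Cauchy--Schwarz argument --- and it correctly locates where $\mathcal W_{k+1}\ge -K_2$ enters (through the intermediate $(k+1)$-form semigroup, producing the extra $e^{K_2t}$). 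You rightly flag the passage from on-diagonal to full Gaussian decay for the gradient as the delicate step; the ``integrated maximum principle applied to $|d\vec h_t|$ via the Bochner formula'' that you offer as an alternative is, in essence, the paper's actual proof. The trade-off: your route avoids Kato's inequality and Moser iteration but requires careful bookkeeping of weighted $L^2$ estimates; the paper's route needs only the local parabolic mean value inequality, which is exactly what the Ricci lower bound provides without any bounded-geometry assumption.
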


\begin{proof}  These estimates are known for the case of functions. The proof for the case of forms is similar, but we include it here for the sake of completeness.  When the Weitzenb\"ock tensor on $k$-forms is bounded below, using the domination technique as in \cite{HSU}, one can show that the heat kernel on functions dominates the heat kernel on $k$-forms  (see also \cites{CharJFA,DLi,Ros})
\begin{equation}\label{kheat2}
|\vec{h}_t(x,y) | \leq e^{K_2 t}  h(t,x,y)
\end{equation}
where $h(t,x,y)$ is the heat kernel of the Laplacian on functions.

By  \cite{SaCo}*{Theorem 6.1}, using the Ricci curvature lower bound, we get
\begin{equation}\label{kheat}
|h(t,x,y) |\leq C_1  \, {\rm vol}(B_x(\sqrt t))^{-1/2}\text{vol}(B_y(\sqrt t))^{-1/2} e^{\sqrt{K_1 t}}   \; e^{-\frac{d^2(x,y)}{C_2t}},
\end{equation}
where $C_1, C_2$ only depend on $n$.  The estimate for $\vec h_t$ follows.

Recall that the Hodge Laplacian on $k$-forms over $M$ may  be written as
\begin{equation}\label{bochner}
\Delta = \nabla^*\nabla +\mathcal{W}_k
\end{equation}
where $\nabla^*\nabla$ is the covariant Laplacian and $\mathcal{W}_k$ is the Weitzenb\"ock tensor. Using~\eqref{bochner}, we have
\begin{equation*}
 \frac 12 \Delta (|\omega|^2) = \langle \Delta \omega, \omega \rangle-|\n \omega|^2 -  \langle \mathcal{W} \omega, \omega \rangle.
\end{equation*}
By combining the  above formula for $(k+1)$-forms  together with Kato's inequality,    $|\n |\omega| \,|^2 \leq |\n \omega|^2$,  we get that $|d\vec{h}_t(x,y)|$ satifies
\begin{equation} \label{heat2}
\Delta|d\vec{h}_t(x,y)| + \partial/\partial t |d\vec{h}_t(x,y)| \leq K_2 |d\vec{h}_t(x,y)|,
\end{equation}
 whenever the Weitzenb\"ock tensor on $(k+1)$-forms is bounded below by $-K_2$.
As a result,
\begin{equation}
\Delta(e^{-K_2 t}|d\vec{h}_t(x,y)|) + \partial/\partial t (e^{-K_2 t}|d\vec{h}_t(x,y)| )\leq 0.
\end{equation}
Similarly, $|\delta\vec{h}_t(x,y)|$ satifies the same inequality, whenever  the Weitzenb\"ock tensor on $(k-1)$-forms is bounded below by $-K_2$.

 As in the proof in \cite{CLY}*{Theorem 6}, we can first obtain $L^2$-estimates for $|d\vec{h}_t|$ and $|\delta\vec{h}_t|$ using the $L^2$ estimates  for $|\vec{h}_t|$ and $h$. Then, the upper estimates follow by using the parabolic version of the Moser iteration as in \cite{SaCo}*{Theorems 5.1, 6.5} and the proof of Theorem 6 in \cite{CLY}.  \end{proof}

\begin{corl}\label{cor211}
 Let $x,y\in M$ be two distinct points. Under the same respective assumptions (depending on the order of the form) as in Lemma
\ref{lemheat}, we have
\[
|\vec{h}_t(x,y)| \leq C_1  \,  {\rm vol}(B_y(\sqrt t))^{-1 } e^{ C_3t} \; e^{-\frac{d^2(x,y)}{C_2 t}},
\]
and
\[
\begin{split}
& |d\vec{h}_t(x,y)| \leq C_1  \,  t^{-1/2}{\rm vol}(B_y(\sqrt t))^{-1 } e^{ C_3t} \; e^{-\frac{d^2(x,y)}{C_2 t}};\\
& |\delta\vec{h}_t(x,y)| \leq C_1  \,  t^{-1/2}{\rm vol}(B_y(\sqrt t))^{-1 } e^{ C_3t} \; e^{-\frac{d^2(x,y)}{C_2 t}}.
\end{split}
\]
\end{corl}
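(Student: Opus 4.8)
The plan is to deduce Corollary~\ref{cor211} directly from Lemma~\ref{lemheat} by trading the symmetric factor $\vol(B_x(\sqrt t))^{-1/2}\vol(B_y(\sqrt t))^{-1/2}$ for the one-sided factor $\vol(B_y(\sqrt t))^{-1}$. The price of this trade is an exponential-in-$d(x,y)$ factor together with a factor polynomial in $d(x,y)/\sqrt t$, both of which the Gaussian $e^{-d^2(x,y)/(C_2t)}$ will absorb at the cost of worsening $C_2$ and introducing a harmless $e^{C_3t}$. Since for $x=y$ the claimed inequality is literally Lemma~\ref{lemheat} (with $d(x,y)=0$ and $e^{\sqrt{K_1t}+K_2t}\leq C e^{C_3t}$ because $\sqrt{K_1t}\leq\tfrac12(K_1t+1)$), we may assume $x\neq y$ and write $d=d(x,y)>0$.

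First I would invoke the Bishop--Gromov relative volume comparison, valid since ${\rm Ric}\geq -K_1$: together with the inclusion $B_y(\sqrt t)\subseteq B_x(\sqrt t+d)$ it gives a constant $C_4=C_4(n)$ (in fact one may take $C_4=1$) with
\[
\vol(B_y(\sqrt t))\;\leq\;\vol(B_x(\sqrt t+d))\;\leq\;C_4\,(1+d/\sqrt t)^{n}\,e^{C_4\sqrt{K_1}\,(\sqrt t+d)}\,\vol(B_x(\sqrt t)),
\]
hence $\vol(B_x(\sqrt t))^{-1/2}\leq C_4^{1/2}(1+d/\sqrt t)^{n/2}e^{(C_4/2)\sqrt{K_1}(\sqrt t+d)}\vol(B_y(\sqrt t))^{-1/2}$. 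Substituting this into the first estimate of Lemma~\ref{lemheat} yields
\[
|\vec h_t(x,y)|\;\leq\;C\,(1+d/\sqrt t)^{n/2}\,e^{(C_4/2)\sqrt{K_1}\,d}\;\vol(B_y(\sqrt t))^{-1}\,e^{\sqrt{K_1t}+(C_4/2)\sqrt{K_1t}+K_2t}\,e^{-d^2/(C_2t)}.
\]

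It then remains to absorb the two $d$-dependent prefactors into the Gaussian. Splitting $e^{-d^2/(C_2t)}=e^{-d^2/(2C_2t)}\cdot e^{-d^2/(2C_2t)}$, the elementary inequality $(1+u)^{n/2}\leq C(n,C_2)\,e^{u^2/(4C_2)}$ with $u=d/\sqrt t$ kills the polynomial factor against the first half, while the elementary inequality $\alpha\beta\leq\tfrac{\alpha^2}{4C_2}+C_2\beta^2$ with $\alpha=d/\sqrt t$, $\beta=\tfrac{C_4}{2}\sqrt{K_1}\sqrt t$ gives $e^{(C_4/2)\sqrt{K_1}d}\leq e^{d^2/(4C_2t)}\,e^{C_2C_4^2K_1t/4}$, which is killed against the second half. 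Finally $\sqrt a\leq\tfrac12(a+1)$ converts the residual $e^{\sqrt{K_1t}+(C_4/2)\sqrt{K_1t}}$ into $C\,e^{C't}$; collecting all factors of the form $e^{(\cdots)t}$ into a single $e^{C_3t}$ with $C_3$ depending only on $n,K_1,K_2$, absorbing constants into $C_1$, and renaming $2C_2$ as $C_2$, one obtains the first inequality of the corollary.

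For the bounds on $|d\vec h_t(x,y)|$ and $|\delta\vec h_t(x,y)|$ one repeats the argument verbatim starting from the corresponding estimates of Lemma~\ref{lemheat}, which require in addition the lower bound $\mathcal{W}_{k+1}\geq -K_2$, respectively $\mathcal{W}_{k-1}\geq -K_2$. The extra factor $t^{-1/2}$ is inert under every manipulation above and simply reappears in the conclusion, the only other change being that $K_2t$ is replaced by $2K_2t$, which is again absorbed into $C_3t$. As for where the content lies: there is no serious obstacle, and the one point to get right is the scaling in the volume comparison, namely that the cost of replacing $\vol(B_x(\sqrt t))$ by $\vol(B_y(\sqrt t))$ is at most $(1+d/\sqrt t)^n\,e^{c\sqrt{K_1}\,d}$, a quantity the Gaussian comfortably dominates.
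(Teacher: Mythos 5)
Your argument is correct and is essentially the paper's own proof: the same Bishop--Gromov comparison trades $\vol(B_x(\sqrt t))^{-1/2}$ for $\vol(B_y(\sqrt t))^{-1/2}$ at the cost of $(1+d/\sqrt t)^{n/2}e^{c(\sqrt t+d)}$, and the same Young-inequality absorption of $e^{cd}$ into half the Gaussian (the paper's inequality \eqref{impo}) together with the boundedness of $(1+z)^{n/2}e^{-z^2/c}$ finishes it. The only differences are cosmetic bookkeeping of constants.
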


\begin{proof}
From the volume comparison theorem, we have
\begin{equation} \label{vol2-4}
{\rm vol}(B_x(\sqrt t))\geq C \,{\rm vol}(B_x(\sqrt t+d(x,y))) \left(1+\frac{d(x,y)}{\sqrt t}\right)^{-n} e^{-C'(t+d(x,y))},
\end{equation}
and since  ${\rm vol}(B_x(\sqrt t+d(x,y)))\geq  {\rm vol}(B_y(\sqrt t))$, we obtain
\begin{equation}\label{vol2-1}
{\rm vol}(B_x(\sqrt t))^{-1/2}\leq C \,{\rm vol}(B_{y}(\sqrt t))^{-1/2}\left(1+\frac{d(x,y)}{\sqrt t}\right)^{n/2}e^{C'(t+d(x,y))}
\end{equation}
 for any $x, y \in M$. For the rest of the paper,
we shall repeatedly use the following essential inequality:
for any $\sigma>0$, we have
\begin{equation}\label{impo}
-\frac{d(x,y)^2}{4C_2t} -  \sigma^2 t\leq - C_2^{-1/2}\sigma d(x,y) .
\end{equation}
 This inequality will allow us to use the Gaussian term in our estimates in order to generate an exponentially decaying term with respect to the distance $d(x,y)$, plus an additional exponentially increasing term in time.

By Lemma~\ref{lemheat} and  for a possibly larger $C_2$, the above estimates and \eqref{impo} yield
\[
|\vec{h}_t(x,y)| \leq C_1  \,  {\rm vol}(B_y(\sqrt t))^{-1 } e^{ C_3t} \left(1+\frac{d(x,y)}{\sqrt t}\right)^{n/2}\; e^{-\frac{d^2(x,y)}{C_2 t}}.
\]
Since the function $(1+z)^{n/2} e^{-z^2/c}$ is bounded for any $c>0$, the estimate of the corollary follows by taking a possibly larger $C_2$ once again.

The estimates for $d\vec{h}_t(x,y)$ and $\delta\vec{h}_t(x,y)$ follow by using the same method.
 \end{proof}

\begin{lem} \label{lem33}
Let $0\leq k\leq n$.
Suppose that an operator $T$ acting on $L^2(\Lambda^k(M))$ has integral kernel $\vec{k}(x,y)$ which satisfies
\begin{equation}\label{assum-3}
\sup_{x} \int_M |\vec{k}(x,y)|^{r^*} dy+\sup_{y} \int_M |\vec{k}(x,y)|^{r^*} dx \leq C
\end{equation}
 for some $1\leq r^* \leq \infty$.

Then, for any $1\leq p, q, r^* \leq \infty$ such that $ {\displaystyle 1+\frac{1}{q} =  \frac{1}{p} + \frac{1}{r^*}}$, the operator $T$ is bounded from $L^p(\Lambda^k(M))$ to $L^q(\Lambda^k(M))$.
\end{lem}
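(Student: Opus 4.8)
The plan is to recognize Lemma~\ref{lem33} as a standard Schur-test/interpolation statement about integral operators, and to prove it by combining the classical Young-type inequality for kernels with the Riesz--Thorin interpolation theorem. The hypothesis \eqref{assum-3} is exactly a uniform $L^{r^*}$-bound on the kernel in each variable separately, which is the natural generalization of the condition appearing in Young's convolution inequality (where $r^*$ plays the role of the exponent of the convolving kernel), and the arithmetic relation $1+\tfrac1q=\tfrac1p+\tfrac1{r^*}$ is precisely the Young exponent relation.

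First I would dispose of the two endpoint-type cases by direct computation. The case $p=1$, $q=r^*$: by Minkowski's integral inequality, $\|Tf\|_{L^{r^*}}\le \int_M |f(y)|\,\bigl\||\vec k(\cdot,y)|\bigr\|_{L^{r^*}_x}\,dy \le C^{1/r^*}\|f\|_{L^1}$, using the second term in \eqref{assum-3}. The case $q=\infty$, which forces $\tfrac1p+\tfrac1{r^*}=1$, i.e. $p=(r^*)'$: by H\"older's inequality pointwise in $x$, $|Tf(x)|\le \bigl\||\vec k(x,\cdot)|\bigr\|_{L^{r^*}_y}\|f\|_{L^p}\le C^{1/r^*}\|f\|_{L^p}$, using the first term in \eqref{assum-3}. (One should be slightly careful that $\vec k(x,y)$ is a kernel taking values in $\mathrm{Hom}(\Lambda^k_y,\Lambda^k_x)$, so $|\vec k(x,y)|$ denotes the pointwise operator norm and $|\vec k(x,y)f(y)|\le |\vec k(x,y)|\,|f(y)|$; this is the only place the form-valued nature enters, and it causes no difficulty.) A third easy case is $p=q=r^*=1$ handled by Fubini, but this is subsumed.

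Next I would interpolate. Fix $r^*$ and regard $T$ as acting on the scale of $L^p$ spaces. The two bounds just established say $T:L^1\to L^{r^*}$ and $T:L^{(r^*)'}\to L^\infty$ are bounded, with norm at most $C^{1/r^*}$. The pairs $(\tfrac1{p_0},\tfrac1{q_0})=(1,\tfrac1{r^*})$ and $(\tfrac1{p_1},\tfrac1{q_1})=(\tfrac1{(r^*)'},0)=(1-\tfrac1{r^*},0)$ both lie on the line $\tfrac1q=\tfrac1p-\tfrac1{(r^*)'}=\tfrac1p+\tfrac1{r^*}-1$ in the $(\tfrac1p,\tfrac1q)$-square, and every admissible pair in the statement lies on the segment joining them. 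By the Riesz--Thorin interpolation theorem applied with $\theta\in[0,1]$ chosen so that $\tfrac1p=(1-\theta)\cdot 1+\theta(1-\tfrac1{r^*})$, one gets $T:L^p\to L^q$ bounded with norm at most $C^{1/r^*}$, which is the claim. The form-valued setting poses no obstruction to Riesz--Thorin since $\Lambda^k(M)$ is a (measurable field of) Hilbert space and the complex interpolation method applies verbatim to Bochner $L^p$-spaces.

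The only genuine subtlety — hence the step I would treat most carefully — is the case $r^*=\infty$, where \eqref{assum-3} reads $\sup_{x,y}|\vec k(x,y)|\le C$-ish only after noting that the stated integral condition with $r^*=\infty$ should be read as $\operatorname*{ess\,sup}$, and where the exponent relation degenerates to $p=q$; then boundedness $L^p\to L^p$ is not automatic from a mere sup bound on the kernel without a finite-measure or decay hypothesis, so in our applications $r^*=\infty$ will only be invoked together with \eqref{assum-3} genuinely finite, which already encodes the needed integrability. In practice $r^*<\infty$ in all uses (the kernel bounds from Corollary~\ref{cor211} give Gaussian decay, making $|\vec k(x,\cdot)|\in L^{r^*}$ for every finite $r^*$), so the interpolation argument above covers every case we need, and no additional obstacle arises.
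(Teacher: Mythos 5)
Your proof is correct and is essentially the argument the paper uses: the paper reduces to the scalar case via $|T\omega(x)|\le\int_M|\vec k(x,y)||\omega(y)|\,dy$ and then cites the standard Young/Schur inequality from Sogge, which is exactly the endpoint-plus-Riesz--Thorin argument you spell out. One small correction to your final paragraph: when $r^*=\infty$ the relation $1+\tfrac1q=\tfrac1p$ forces $p=1$, $q=\infty$ (handled directly by the essential-sup bound on the kernel), whereas the degenerate case $p=q$ corresponds to $r^*=1$, i.e.\ the classical Schur test, which your interpolation between $L^1\to L^1$ and $L^\infty\to L^\infty$ already covers; this mix-up does not affect the validity of the proof.
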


The above Lemma is known as Schur's test, or Young's inequality  and a proof when $T$ is an operator on $L^2(\mathbb{R}^n)$ can be found in~\cite{Sog}*{Theorem 0.3.1}.  For our case, the proof is identical since
\[
\|T\omega\|_{L^q}  \leq \left( \int_M\left|\int_M |\vec{k}(x,y)|\cdot|\omega(y)|  \, dy\right|^q dx \right)^{1/q},
\]
 and as a result all of the  H\"older estimates in \cite{Sog} can easily be extended to our operator by our assumption on the integrability of the kernel.  \\

Note that the condition on $p, q, r^*$ implies that the above result only works for $q \geq p$.

Using the above lemma, we will now obtain the $L^p$ to $L^q$ boundedness for various operators related to the resolvent  operator $(\Delta+\alpha)^{-m}$, where $\alpha\gg 0$ and $m>0$. Some of them will be used in the following section for the proof of the $L^p$-Weyl criterion,  but also in Section \ref{S4} for the proof of Theorem \ref{thm3}. For reasons that will become apparent in the upper estimate, we change our notation a bit and denote $\alpha=\xi^2$. For any $m>0$ and any $\xi \in \mathbb{R}$ we denote by $\vec{g}_{m,\xi}(x,y)$ the kernel of the resolvent operator $(\Delta+\xi^2)^{-m}$.

\begin{thm} \label{corl31}
Suppose that $M$ is a complete noncompact manifold with Ricci curvature bounded below.   Let
\[
\phi(x)=\frac{1}{\sqrt{{\rm vol}(B_x(1))}}.
\]
For any $\alpha>0$, $1\leq r^*\leq \infty$, let $r$ be the conjugate number to $r^*$, that is $1/r+1/r^*=1$.
Let $p,q,r^*$ as in Lemma \ref{lem33}. If  the Weitzenb\"ock tensor on $k$-forms is bounded below and
\[
m>\frac{n}{2r},
\]
then  the operators
\[
\begin{split}
& T=\phi^{1/r^*-1}(\Delta + \xi^2)^{-m} \phi^{1/r^*-1};\\
& T'=\phi^{2/r^*-2}(\Delta + \xi^2)^{-m} ;\\
& T''= (\Delta + \xi^2)^{-m} \phi^{2/r^*-2}.
\end{split}
\]
  are bounded from $L^p$ to $L^q$ for any $\xi$ large enough.

If
\[
m>\frac{n}{2r}+\frac 12,
\]
 and the Weitzenb\"ock tensor on $(k+1)$-forms is bounded below  then the operators
 \[
\begin{split}
& T_1=\phi^{1/r^*-1}d(\Delta + \xi^2)^{-m} \phi^{1/r^*-1};\\
& T'_1=\phi^{2/r^*-2}d(\Delta + \xi^2)^{-m} ;\\
& T''_1= d(\Delta + \xi^2)^{-m} \phi^{2/r^*-2}
\end{split}
\]
 are bounded from $L^p$ to $L^q$ for any $\xi$ large enough.

 If
\[
m>\frac{n}{2r}+\frac 12,
\]
and the Weitzenb\"ock tensor on $(k-1)$-forms is bounded below then the operators
 \[
\begin{split}
& T_2=\phi^{1/r^*-1}\delta(\Delta + \xi^2)^{-m} \phi^{1/r^*-1};\\
& T'_2=\phi^{2/r^*-2}\delta(\Delta + \xi^2)^{-m} ;\\
& T''_2= \delta(\Delta +\xi^2)^{-m} \phi^{2/r^*-2}.
\end{split}
\]
are bounded from $L^p$ to $L^q$ for any $\xi$ large enough.
\end{thm}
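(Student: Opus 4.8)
The plan is to reduce everything to a single kernel estimate and then invoke Lemma~\ref{lem33} (Schur's test). Recall the representation
\[
(\Delta+\xi^2)^{-m}=\frac{1}{\Gamma(m)}\int_0^\infty t^{m-1}e^{-\xi^2 t}e^{-t\Delta}\,dt,
\]
so that the kernel $\vec g_{m,\xi}(x,y)$ is $\frac{1}{\Gamma(m)}\int_0^\infty t^{m-1}e^{-\xi^2 t}\vec h_t(x,y)\,dt$, and similarly $d\vec g_{m,\xi}$ and $\delta\vec g_{m,\xi}$ are obtained by inserting $d\vec h_t$ or $\delta\vec h_t$. I would plug in the Gaussian bounds of Corollary~\ref{cor211}, namely $|\vec h_t(x,y)|\le C_1\,{\rm vol}(B_y(\sqrt t))^{-1}e^{C_3 t}e^{-d^2(x,y)/(C_2 t)}$ (and the versions with an extra $t^{-1/2}$ for $d\vec h_t$, $\delta\vec h_t$), and use \eqref{impo} to trade the Gaussian factor for an exponential decay $e^{-C_2^{-1/2}\xi d(x,y)}$ at the cost of absorbing $e^{C_3 t}$ once $\xi$ is large enough (say $\xi^2\ge 2C_3$). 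What remains after this step is a purely local estimate: I need to control $\int_0^\infty t^{m-1-\epsilon}{\rm vol}(B_y(\sqrt t))^{-1}\,dt$ over the range $d(x,y)\lesssim \sqrt t$ (the short-time part), where $\epsilon=0$ or $1/2$ according to whether a first-order derivative is present.

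The key local input is the volume lower bound from the Bishop--Gromov / volume comparison inequality under ${\rm Ric}\ge -K_1$: there are constants so that ${\rm vol}(B_y(\sqrt t))\ge c\,t^{n/2}$ for $t\le 1$ and ${\rm vol}(B_y(\sqrt t))\ge c\,{\rm vol}(B_y(1))\,e^{-C\sqrt t}$ for $t\ge 1$, which means ${\rm vol}(B_y(\sqrt t))^{-1}\le C\phi(y)^2 t^{-n/2}$ for $t\le 1$ and ${\rm vol}(B_y(\sqrt t))^{-1}\le C\phi(y)^2 e^{C\sqrt t}$ for $t\ge 1$. The factor $e^{C\sqrt t}$ from large $t$ is again harmless once $\xi$ is large, being absorbed into $e^{-\xi^2 t/2}$. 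So the convergence of $\int_0^1 t^{m-1}\cdot t^{-n/2}\,dt$ is exactly the condition $m>n/2$ in the un-derivative case; but we want the Schur exponent $r^*$ to enter, and this is where the conjugation by powers of $\phi$ does its work. Writing $T$'s kernel as $\phi(x)^{1/r^*-1}\vec g_{m,\xi}(x,y)\phi(y)^{1/r^*-1}$, the factor $\phi(y)^{2}$ coming from the volume bound partially cancels against $\phi(y)^{1/r^*-1}$, leaving $\phi(y)^{1/r^*+1}$; one then checks that $\int_M |{\rm kernel}|^{r^*}\,dy<\infty$ uniformly in $x$ (and symmetrically in $x$) because the $t$-integral now contributes $t^{-n/(2r)}$-type behaviour after raising to the $r^*$ power and integrating in $y$ using the exponential spatial decay $e^{-c\xi d(x,y)}$ together with the volume growth of annuli. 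Carrying this bookkeeping out gives precisely the stated thresholds $m>n/(2r)$ and, when a $d$ or $\delta$ is present, $m>n/(2r)+\tfrac12$ (the extra $t^{-1/2}$ eats half a power of $t$). The variants $T'$, $T''$, $T'_i$, $T''_i$ are handled the same way: there the weight $\phi^{2/r^*-2}$ sits on only one side, so one uses the one-sided form of \eqref{vol2-1} to convert ${\rm vol}(B_x(\sqrt t))^{-1/2}$ into ${\rm vol}(B_y(\sqrt t))^{-1/2}$ (or vice versa), at the cost of a polynomial-in-$d/\sqrt t$ factor which is swallowed by the Gaussian exactly as in the proof of Corollary~\ref{cor211}.

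The integrability-in-space step needs a uniform bound on the volume of annuli, $\,{\rm vol}(B_y(R+1)\setminus B_y(R))$, which under ${\rm Ric}\ge -K_1$ grows at most like $e^{cR}$; since the kernel decays like $e^{-c\xi R}$ at distance $R$, choosing $\xi$ large makes $\int_M e^{r^*(-c\xi d(x,y))}\,(\text{volume density})\,dy$ finite and uniformly bounded in $x$, and this is the only place the ``$\xi$ large enough'' hypothesis is genuinely used (beyond absorbing $e^{C_3 t}$). I would organize the write-up as: (i) the Laplace-transform representation and substitution of the Gaussian bounds; (ii) the splitting $t\le 1$ versus $t\ge 1$ and the two volume estimates; (iii) verification of \eqref{assum-3} for $T$ by an explicit double estimate (the $\sup_x\int dy$ and $\sup_y\int dx$ pieces, which are symmetric after using \eqref{vol2-1}); then (iv) remark that $T'$, $T''$ are the cases where $1/r^*-1$ is moved entirely to one side, with the comparison \eqref{vol2-1} supplying the missing weight, and (v) that $T_1,T_2$ etc.\ only change $t^{m-1}$ to $t^{m-3/2}$, shifting the threshold by $\tfrac12$. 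The main obstacle I anticipate is purely technical: keeping the exponents straight through the conjugation so that the power of $\phi(y)$ surviving after using the volume bound is exactly compatible with finiteness of $\int |{\rm kernel}|^{r^*}dy$, and making sure the ``symmetric'' $\sup_y\int dx$ bound does not need a stronger hypothesis — it does not, because \eqref{vol2-1} is symmetric in $x,y$ up to the harmless polynomial/exponential factors already controlled in Corollary~\ref{cor211}.
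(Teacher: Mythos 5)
Your overall strategy coincides with the paper's: represent $(\Delta+\xi^2)^{-m}$ as a Laplace transform of the heat semigroup, insert the Gaussian bounds of Corollary~\ref{cor211}, use \eqref{impo} and the weight comparison \eqref{vol2-1}, and close with Lemma~\ref{lem33}. But there is a genuine gap at the one step that actually produces the threshold $m>n/(2r)$. After trading the Gaussian for the decay $e^{-c\xi d(x,y)}$ you reduce to ``the purely local estimate $\int_0^\infty t^{m-1}\mathrm{vol}(B_y(\sqrt t))^{-1}\,dt$'', which is a \emph{pointwise} bound on the kernel and diverges exactly when $n/(2r)<m\le n/2$; you then assert that the conjugation by powers of $\phi$ upgrades $t^{-n/2}$ to $t^{-n/(2r)}$. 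It cannot: $\phi(y)$ carries no $t$-dependence, and its only role is to normalize the non-uniformity of $\mathrm{vol}(B_y(1))$ across the manifold (which is why the theorem needs no uniform lower volume bound). Likewise, performing the spatial integration with only the decay $e^{-c\xi d(x,y)}$ and the annulus volume growth, as you propose, gains a factor $\mathrm{vol}(B_y(1))$ --- again $t$-independent --- so your accounting closes only under the stronger hypothesis $m>n/2$ (respectively $m>n/2+1/2$).

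The missing mechanism is that one must keep (half of) the Gaussian through the spatial integration: $\int_M e^{-r^* d^2(x,y)/(C_2 t)}\,dx\lesssim \mathrm{vol}(B_y(\sqrt t))\,e^{Ct}$, so that $\int_M|\vec h_t(x,y)|^{r^*}\,dx\lesssim \mathrm{vol}(B_y(\sqrt t))^{1-r^*}e^{Ct}$. One power of $\mathrm{vol}(B_y(\sqrt t))$ is thus recovered; the $r^*$-th root of what remains is $\mathrm{vol}(B_y(\sqrt t))^{-1/r}$, and comparison with the factor $\mathrm{vol}(B_y(1))^{1/r}$ supplied by the $\phi$-weights yields $\max(1,t^{-n/(2r)})$ --- whence $m>n/(2r)$. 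One must also justify moving the $r^*$-th power past the $t$-integral: the paper does both things at once via the co-area formula, the splitting $f(\sqrt t)^{-1}=f(\sqrt t)^{-1/r}f(\sqrt t)^{-1/r^*}$ and H\"older's inequality in $t$, followed by the explicit estimate $\int_0^\infty \tfrac{f(\rho)}{f(\sqrt t)}\,\rho\,e^{-\rho^2/(C_2t)}\,d\rho\le Ct\,e^{C't}$; Minkowski's integral inequality for the $L^{r^*}(dx)$-norm of the $t$-integral would be an acceptable substitute, but some such device is indispensable and is absent from your outline. The remaining points --- absorbing $e^{C_3t}$ for $\xi$ large, handling the one-sided weights of $T',T''$ via \eqref{vol2-1}, and the extra $t^{-1/2}$ shifting the threshold by $1/2$ for $T_1,T_2$ --- are fine.
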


\begin{proof}
We only provide the proof for the case of $T$, since the remaining cases are proved in a similar manner.  The additional $+1/2$ in the condition for $m$ comes from the additional  $t^{-1/2}$ factor in the upper estimates for the derivatives of the heat kernel.

The kernel $\vec k_{m,\xi}(x,y)$ of $T$ is given by
\[
\vec k_{m,\xi}(x,y)=\phi(x)^{\frac{1}{r^*}-1}\phi(y)^{\frac{1}{r^*}-1}\vec g_{m,\xi}(x,y),
\]
where
\begin{equation} \label{resol}
\vec g_{m,\xi}(x,y)=c_n\int_0^\infty t^{m-1}e^{-\xi^2 t} \,\vec h_{t}(x,y)\, dt.
\end{equation}
By the volume estimate, we have
\[
\frac{{\rm vol}(B_x(1))}{{\rm vol}(B_y(1))} \leq \frac{{\rm vol}(B_y(1+d(x,y)))}{{\rm vol}(B_y(1))} \leq C\, (1+d(x,y))^n \, e^{C' d(x,y)} \leq C\,  e^{C' d(x,y)} .
\]
Bearing in mind that $1-r^*\leq 0$,  we therefore have
\begin{equation}\label{reso0}
|\vec k_{m,\xi}(x,y)|^{r^*}\leq \,C_1 \, \phi(y)^{2- 2r^*}   \, e^{C' d(x,y)} \, |\vec g_{m,\xi}(x,y)|^{r^*}.
\end{equation}

Substituting the upper estimate for the heat kernel from Corollary \ref{cor211} we get
\[
\begin{split}
|\vec k_{m,\xi}(x,y)|^{r^*}&\leq \,C_1 \,  \phi(y)^{2 -2r^*}   e^{C' d(x,y)} \left( \int_0^\infty t^{m-1} {\rm vol}(B_y(\sqrt t))^{-1 }  e^{-\xi^2 t}  e^{ C_3t} \; e^{-\frac{d^2(x,y)}{C_2 t}} \; dt \right)^{r^*}\\
&\leq  \,C_1 \,  \phi(y)^{2 -2r^*}   \left( \int_0^\infty t^{m-1} {\rm vol}(B_y(\sqrt t))^{-1 }  e^{- \frac 12 \xi^2 t}  \; e^{-\frac{d^2(x,y)}{C_2 t}} \; dt \right)^{r^*}
\end{split}
\]
for all $\xi$ large enough, after using \eqref{impo}.
Fixing $y$, let
\begin{equation}\label{af}
f(a)={\rm vol}(B_y(a ))
\end{equation}
for $a>0$. Then $f(1)=\phi(y)^{-2}$. Using the co-area formula, we obtain
\[
\int_M |\vec k_{m,\xi}(x,y)|^{r^*}dx\leq C_1\int_0^\infty   \, f'(\rho) \left( \int_0^\infty \frac{f(1)^{1/r}}{f(\sqrt t)}   \; t^{m-1}  e^{-\frac 12  \xi^2 t}   \; e^{-\frac{\rho^2}{C_2 t}} \, dt \right)^{r^*}d\rho .
\]
By volume comparison, $ f(1)/f(\sqrt t)\leq C \max\left(1, t^{-n/2}\right)$, hence, after splitting $f(\sqrt t)= f(\sqrt t)^{1/r} f(\sqrt t)^{1/r^*},$ we get
\[
\int_M |\vec k_{m,\xi}(x,y)|^{r^*}dx\leq C_1\int_0^\infty   \, f'(\rho) \left( \int_0^\infty  f(\sqrt t)^{-1/r^*}  \, d\mu \right)^{r^*}d\rho .
\]
where $d\mu =t^{m-1} \max\left(1, t^{-n/2r}\right)  e^{-  \frac 12   \xi^2 t}   \; e^{-\frac{\rho^2}{C_2 t}} \, dt$.  By the H\"older inequality,
\[
\begin{split}
&\left( \int_0^\infty   f(\sqrt t)^{-1/r^*}     d\mu \right)^{r^*} \leq \left( \int_0^\infty  d \mu \right)^{r^*/r} \cdot \int_0^\infty    f(\sqrt t)^{-1}   \;   d\mu.
\end{split}
\]

Note that
\[
\int_0^\infty  d \mu \leq C
\]
whenever $m> n/2r$, therefore
\[
\begin{split}
\int_M |\vec k_{m,\xi}(x,y)|^{r^*}dx&\leq C_1\int_0^\infty   \int_0^\infty   \, \frac{f'(\rho)}{f(\sqrt t)}  \max\left(1, t^{-n/2r}\right)  \; t^{m-1}  e^{-\frac 12   \xi^2 t}   \; e^{-\frac{\rho^2}{C_2 t}} \, dt\, d\rho \\
&= C_1\int_0^\infty   \int_0^\infty   \, \frac{f(\rho)}{f(\sqrt t)}  \,\rho \, \max\left(1, t^{-n/2r}\right)  \; t^{m-2}  e^{-  \frac 12 \xi^2 t}   \; e^{-\frac{\rho^2}{C_2 t}}  \, d\rho \, dt
\end{split}
\]
after applying Fubini's theorem and integration by parts. We estimate,
\[
\int_0^{\sqrt t}  \frac{f(\rho)}{f(\sqrt t)}  \,\rho \, e^{-\frac{\rho^2}{C_2 t}}  \, d\rho  \leq  \int_0^{\sqrt t} \,\rho   \; e^{-\frac{\rho^2}{C_2 t}}  \, d\rho \leq  \int_0^{\infty} \,\rho   \; e^{-\frac{\rho^2}{C_2 t}}  \, d\rho \leq C \, t
\]
and
\[
\begin{split}
\int_{\sqrt t}^\infty  \frac{f(\rho)}{f(\sqrt t)}  \,\rho \; e^{-\frac{\rho^2}{C_2 t}}  \, d\rho  \leq  \int_0^{\infty} \,\rho   \, \left(\frac{\rho}{\sqrt t}\right)^{n}\; e^{C' \rho}\; e^{-\frac{\rho^2}{C_2 t}}  \, d\rho \leq   C  t\, e^{C' t}
\end{split}
\]
 where we have employed similar methods as in the proof of Corollary \ref{cor211} to get the upper bounds,  including \eqref{impo} and the fact that $z e^{-z^2/c}$ is bounded for any $c>0$.  Combining the above, we have
\begin{equation*}\label{reso2}
\begin{split}
\int_M |\vec k_{m,\xi}(x,y)|^{r^*}dx&\leq C_1  \int_0^\infty   \,  \max\left(1, t^{-n/2r}\right)  \; t^{m-1}   \, e^{C' t} \,e^{- C_4  \xi^2 t}   \; dt \leq C
\end{split}
\end{equation*}
whenever $m>n/2r$,  and for all $\xi$ large enough.

\end{proof}

\begin{corl} \label{corl32}
Suppose that $M$ is a complete noncompact manifold with Ricci curvature and the Weitzenb\"ock tensor on $k$-forms both bounded below. Let $m>n/4$ and $C_5>0$. Then  for any $\xi$ large enough
\begin{equation}\label{upper-e}
\int_M\phi(y)^{-2}   e^{C_5 d(x,y)} |\vec g_{m,\xi}(x,y)|^{2}dx\leq C_6
\end{equation}
and  for $C_7>0$ small enough\footnote{$C_7$ depends only on $C_2$ from the Gaussian estimate.}
\begin{equation}\label{upper-e2}
\int_M\phi(y)^{-2}   e^{C_7\xi d(x,y)} |\vec g_{m,\xi}(x,y)|^{2}dx\leq C_6.
\end{equation}
\end{corl}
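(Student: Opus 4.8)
The plan is to derive Corollary~\ref{corl32} as a specialization of the kernel integrability bounds already established, namely the estimates in Theorem~\ref{corl31} together with the pointwise Gaussian bound of Corollary~\ref{cor211}. The key observation is that \eqref{upper-e} and \eqref{upper-e2} are precisely the $r^*=2$ case of the computation carried out in the proof of Theorem~\ref{corl31}: when $r^*=2$ we have $r=2$, the condition $m>n/2r$ becomes $m>n/4$, and the weight $\phi(x)^{1/r^*-1}\phi(y)^{1/r^*-1}$ becomes $\phi(x)^{-1/2}\phi(y)^{-1/2}$, so that $|\vec k_{m,\xi}(x,y)|^{r^*}=\phi(x)^{-1}\phi(y)^{-1}|\vec g_{m,\xi}(x,y)|^2$. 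The extra factor $e^{C_5 d(x,y)}$ (resp.\ $e^{C_7\xi d(x,y)}$) is exactly the type of exponentially growing term that the proof of Theorem~\ref{corl31} already absorbs using \eqref{impo} and the volume comparison inequality \eqref{vol2-1}; one only needs to track where it enters and check it does not destroy convergence.

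\textbf{Step 1.} Start from Corollary~\ref{cor211}, which gives $|\vec g_{m,\xi}(x,y)|\le c_n\int_0^\infty t^{m-1}e^{-\xi^2 t}|\vec h_t(x,y)|\,dt$ with $|\vec h_t(x,y)|\le C_1\,{\rm vol}(B_y(\sqrt t))^{-1}e^{C_3 t}e^{-d^2(x,y)/C_2 t}$. Insert the extra weight $\phi(y)^{-2}={\rm vol}(B_y(1))$, and the extra exponential $e^{C_5 d(x,y)}$. Using \eqref{impo} with $\sigma^2=\tfrac12\xi^2$, the part of the Gaussian $e^{-d^2/(2C_2 t)}$ is traded for $e^{-C_2^{-1/2}(\xi/\sqrt2)\,d(x,y)}$ plus an $e^{\xi^2 t/2}$ which is dominated by the $e^{-\xi^2 t}$ from the resolvent once $\xi$ is large. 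Then for $\xi$ large the surviving distance factor $e^{C_5 d(x,y)}e^{-c\xi d(x,y)}$ is bounded by $1$, which disposes of the $e^{C_5 d}$ term; for \eqref{upper-e2} one keeps a bit of the decay, writing $e^{C_7\xi d}e^{-c\xi d}\le 1$ provided $C_7<c$, which pins down the dependence of $C_7$ on $C_2$ alone as claimed in the footnote.

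\textbf{Step 2.} After the distance factor is neutralized, what remains is
\[
\int_M \phi(y)^{-2}|\vec g_{m,\xi}(x,y)|^2\,dx \le C_1\int_M \left(\int_0^\infty \frac{f(1)^{1/2}}{f(\sqrt t)}\,t^{m-1}e^{-\tfrac12\xi^2 t}e^{-\tfrac{d^2(x,y)}{C_2 t}}\,dt\right)^2 dx,
\]
with $f(a)={\rm vol}(B_y(a))$ as in \eqref{af}. This is word for word the integral estimated in the proof of Theorem~\ref{corl31} in the case $r^*=r=2$: apply the co-area formula in $x$ to convert $dx$ into $f'(\rho)\,d\rho$, split $f(\sqrt t)=f(\sqrt t)^{1/2}f(\sqrt t)^{1/2}$, use the Hölder inequality in the inner $t$-integral, use $f(1)/f(\sqrt t)\le C\max(1,t^{-n/2})$ from volume comparison, integrate by parts in $\rho$, and bound the two $\rho$-integrals over $[0,\sqrt t]$ and $[\sqrt t,\infty)$ exactly as there. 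Convergence of the final $t$-integral $\int_0^\infty \max(1,t^{-n/2})\,t^{m-1}e^{C't}e^{-C_4\xi^2 t}\,dt$ near $t=0$ needs $m-1-n/2>-1$, i.e.\ $m>n/4$, and near $t=\infty$ it converges for $\xi$ large; this yields $C_6$.

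\textbf{The main obstacle} is essentially bookkeeping rather than a new idea: one must verify that introducing the fixed exponential weight $e^{C_5 d(x,y)}$ (which does \emph{not} shrink with $\xi$) is still harmless, and this is where the two-part use of \eqref{impo}—splitting off half the Gaussian to beat $e^{C_5 d}$ and keeping the other half against the resolvent exponential—must be arranged carefully, together with choosing $\xi$ large depending on $C_3, C_5$ and the volume-comparison constant $C'$. The second estimate \eqref{upper-e2} is the more delicate of the two because there the competing term $e^{C_7\xi d}$ scales with $\xi$ exactly like the gain from \eqref{impo}, so the smallness of $C_7$ relative to $C_2^{-1/2}$ is genuinely needed and should be stated as such. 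Everything else is a transcription of the Theorem~\ref{corl31} argument with $p=q=2$, $r^*=2$.
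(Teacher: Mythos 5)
Your proposal is correct and follows essentially the same route as the paper: the paper's own proof simply observes that the bound established in the proof of Theorem~\ref{corl31} is really $\int_M \phi(y)^{2-2r^*}e^{C'd(x,y)}|\vec g_{m,\xi}(x,y)|^{r^*}\,dx\leq C_6$ for any $C'>0$, takes $r^*=r=2$ (so $m>n/4$), and absorbs $e^{C_7\xi d(x,y)}$ by choosing $C_7$ with $-\xi^2+(C_7\xi)^2C_2\leq -C_4\xi^2$, exactly as in your Step~1. One small transcription slip: after the H\"older splitting the surviving power is $\max(1,t^{-n/2r})=\max(1,t^{-n/4})$, not $\max(1,t^{-n/2})$, which is what makes the final $t$-integral converge near $0$ under $m>n/4$ (your displayed integrand with $t^{-n/2}$ would instead require $m>n/2$).
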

\begin{proof}
In the proof of the previous theorem we have in fact shown that
\[
\int_M  C_1 \, \phi(y)^{2- 2r^*}   \, e^{C' d(x,y)} \, |\vec g_{m,\xi}(x,y)|^{r^*} \leq C_6
\]
for any $C'>0$, $\xi$ large enough and $m>n/2r$. Replacing $C'$ by $C_7\xi$, we note that $e^{C_7\xi d(x,y)}$ may also be absorbed by the Gaussian term when applying \eqref{impo} in the proof of the theorem  if we choose $C_7$ so that $-\xi^2+(C_7\xi)^2 C_2 \leq -C_4 \xi^2$ for some $C_4>0$. Hence,
\[
\int_M  C_1 \, \phi(y)^{2- 2r^*}   \, e^{C_7\xi d(x,y)} \, |\vec g_{m,\xi}(x,y)|^{r^*} \leq C_6
\]
for all $\xi$ large enough. The Corollary follows by taking $r^*=2$.
\end{proof}

 \begin{corl} \label{lem31}
 Let $m>0$. Under the same respective assumptions (depending on the order of the form) as in Theorem~\ref{corl31}, we have that
$\vec{g}_{m,\xi}(x,y)$ is $L^1$-integrable with a uniform upper bound  for $\xi$ large enough. Therefore the operator $(\Delta+\alpha)^{-m} $  is bounded on $L^1(\Lambda^k(M))$  for $\alpha(=\xi^2)$ large enough.

Similarly, for $m>1/2$,
 $d\vec{g}_{m,\xi}(x,y)$,  and $\delta\vec{g}_{m,\xi}(x,y)$ are  $L^1$-integrable with a uniform upper bound  for $\xi$ large enough. Therefore the operators  $d(\Delta+\alpha)^{-m} $ and $\delta (\Delta+ \alpha )^{-m}$ are also bounded on $L^1(\Lambda^k(M))$  for $\alpha$ large enough.
\end{corl}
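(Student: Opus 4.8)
The plan is to recognize Corollary~\ref{lem31} as the endpoint case $r^*=1$ (equivalently $r=\infty$) of Theorem~\ref{corl31} combined with Lemma~\ref{lem33}. When $r^*=1$ the exponents $\tfrac1{r^*}-1$ and $\tfrac2{r^*}-2$ both vanish, so the weights $\phi^{1/r^*-1}$ and $\phi^{2/r^*-2}$ are identically $1$; hence the operators $T,T',T''$ of Theorem~\ref{corl31} all reduce to $(\Delta+\xi^2)^{-m}$, the family $T_1,T_1',T_1''$ to $d(\Delta+\xi^2)^{-m}$, and $T_2,T_2',T_2''$ to $\delta(\Delta+\xi^2)^{-m}$. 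Moreover the thresholds $m>\tfrac{n}{2r}$ and $m>\tfrac{n}{2r}+\tfrac12$ become $m>0$ and $m>\tfrac12$, exactly as in the statement; and for $r^*=1$ the relation $1+\tfrac1q=\tfrac1p+\tfrac1{r^*}$ of Lemma~\ref{lem33} forces $p=q$, so the relevant mapping property is boundedness on $L^1$.

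First I would extract, from the proof of Theorem~\ref{corl31}, the kernel bound that is actually established there. Combining the resolvent representation~\eqref{resol} with the Gaussian estimate of Corollary~\ref{cor211} and absorbing the time-exponential factor $e^{C_3 t}$ into $e^{-\xi^2 t}$ for $\xi$ large gives
\[
|\vec g_{m,\xi}(x,y)|\le C_1\int_0^\infty t^{m-1}\,{\rm vol}(B_y(\sqrt t))^{-1}\,e^{-\frac12\xi^2 t}\,e^{-\frac{d^2(x,y)}{C_2 t}}\,dt .
\]
Fixing $y$, writing $f(\rho)={\rm vol}(B_y(\rho))$ as in~\eqref{af}, and applying the co-area formula followed by Fubini and integration by parts — this is precisely the computation in Theorem~\ref{corl31} specialized to $r^*=1$, where the factor $\max(1,t^{-n/2r})$ is just $1$ — reduces $\int_M|\vec g_{m,\xi}(x,y)|\,dx$ to
\[
C_1\int_0^\infty\!\!\int_0^\infty \frac{f(\rho)}{f(\sqrt t)}\,\rho\, t^{m-2}\,e^{-\frac12\xi^2 t}\,e^{-\frac{\rho^2}{C_2 t}}\,d\rho\,dt .
\]
Splitting the $\rho$-integral at $\rho=\sqrt t$ and using volume comparison (as in Theorem~\ref{corl31}) bounds this by $C_1\int_0^\infty t^{m-1}e^{C't}e^{-C_4\xi^2 t}\,dt$, which is finite and uniformly bounded for $\xi$ large precisely when $m>0$. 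Since $|\vec h_t(x,y)|=|\vec h_t(y,x)|$, the representation~\eqref{resol} gives $|\vec g_{m,\xi}(x,y)|=|\vec g_{m,\xi}(y,x)|$, so the two terms in~\eqref{assum-3} coincide; thus the hypothesis of Lemma~\ref{lem33} holds with $r^*=1$, and $(\Delta+\alpha)^{-m}$ is bounded on $L^1(\Lambda^k(M))$ for $\alpha=\xi^2$ large, with a uniform norm bound.

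For $d\vec g_{m,\xi}$ and $\delta\vec g_{m,\xi}$ I would repeat this verbatim, using instead the $(k+1)$- and $(k-1)$-form Gaussian bounds of Corollary~\ref{cor211}; the only change is the extra $t^{-1/2}$ in the kernel estimate, which turns the final integrand $t^{m-1}$ into $t^{m-3/2}$ and so moves the convergence threshold to $m>\tfrac12$. Lemma~\ref{lem33} with $r^*=1$ then yields the boundedness of $d(\Delta+\alpha)^{-m}$ and $\delta(\Delta+\alpha)^{-m}$ on $L^1(\Lambda^k(M))$ for $\alpha$ large. I do not expect a genuine obstacle. The one point meriting a remark is that the H\"older step in the proof of Theorem~\ref{corl31} — the splitting $f(\sqrt t)=f(\sqrt t)^{1/r}f(\sqrt t)^{1/r^*}$ and the ensuing application of H\"older with exponents $r,r^*$ — degenerates at $r^*=1$, where $f(\sqrt t)^{1/r}\equiv1$ and H\"older's inequality is a trivial identity; one should check (it is immediate) that the chain of estimates there remains valid, and in fact simplifies, in this endpoint case, so that no new argument is needed.
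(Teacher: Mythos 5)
Your proposal is correct and is essentially the paper's own argument: the proof there literally begins ``This follows from Theorem~\ref{corl31} by letting $r^*=1$,'' and your careful verification that the weights trivialize, the thresholds become $m>0$ and $m>\tfrac12$, the H\"older step degenerates harmlessly, and both integrals in \eqref{assum-3} coincide by symmetry is exactly the content of that specialization. The paper additionally records a shorter direct proof of the first claim, bounding $|\vec g_{m,\xi}(x,y)|\le c_n\int_0^\infty e^{-(\xi^2-K_2)t}t^{m-1}h(t,x,y)\,dt$ via the domination \eqref{kheat2} and using $\int_M h(t,x,y)\,dx\le 1$, which bypasses the Gaussian/co-area machinery entirely, but this is only a convenience.
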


\begin{proof} This follows from Theorem \ref{corl31} by letting $r^*=1$. However, we can provide a shorter proof for the $L^1$-boundedness of $(\Delta+\alpha)^{-m}$.

 Let $\alpha=\xi^2$. Given that the  Weitzenb\"ock tensor is bounded below,  estimate \eref{kheat2} and formula \eqref{resol}  imply that
\[
|\vec{g}_{m,\xi}(x,y)|\leq c_n \int_0^\infty e^{-(\xi^2-K_2)t}t^{m-1} h(t,x,y) dt.
\]
By \cite{GrHeat}*{Theorem 7.16}, the heat kernel of the Laplacian on functions satisfies
\[
\int_M h(t,x,y) dx\leq 1.
\]
Therefore,
\[
\int_M |\vec{g}_{m,\xi}(x,y)|dx\leq \int_0^\infty e^{-(\xi^2-K_2)t}t^{m-1}  dt\leq C<\infty
\]
whenever $m>0$ and $\xi^2>K_2$. This proves that the operator $(\Delta+\xi^2)^{-m}$ is bounded on $L^1(\Lambda^k(M))$ for $m>0$ and $\xi^2>K_2$.
\end{proof}

\begin{remark}
If we assume $m$ is sufficiently large, then the proofs of Theorem~\ref{corl31}, Corollary~\ref{corl32}, and Corollary~\ref{lem31} are standard  and follow from the well-known heat kernel and volume estimates.  See~\cite{sturm} for details. In our results, the number $m$ is optimal.
\end{remark}

\begin{remark}
We would like to remark that since  Lemma \ref{lem33} holds for a more general class of operators we could use it to obtain resolvent estimates as in Theorem \ref{corl31} for operators other than the Laplacian. For example, consider a self-adjoint operator $\mathcal{L}$ acting on $L^2$-integrable sections of bundles over Riemannian manifolds (with or without boundary). Whenever $\mathcal{L}$ can be extended to the space of $L^p$-integrable sections via the heat semigroup, then one can use the kernel of the operator on $L^2$ to obtain results for the boundedness of $\mathcal{L}$ between the various $L^p$ spaces  (see Section \ref{S0} for the case of the Laplacian on forms). For example, one could extend the above lemma to the square of Dirac operators on Clifford bundles over complete manifolds whenever the Clifford contraction and Ricci curvature are bounded below (see \cite{ChGr} for how the square Dirac operator can be extended to $L^p$ via its heat semigroup).
\end{remark}

\section{Weyl Criterion for   the  $L^p$-Spectrum} \label{S0}

In this section we will provide the detailed definition of the $L^p$-spectrum of the Laplacian on a complete manifold $M$ for $p\in[1,\infty]$.  We begin by recalling the extension of the Laplacian on $L^p(\Lambda^k(M))$ using the semi-group of the operator. For any $t\geq 0$, the heat operator  $e^{-t\Delta}$  is a bounded operator  on $L^2(\Lambda^k(M))$.  Whenever  $\mathcal{W}_k$ is bounded below, $e^{-t\Delta}$  extends to a  semigroup of operators on $L^p(\Lambda^k(M))$ for any $p\in[1,\infty]$, with $L^\infty(\Lambda^k(M))$ defined as $(L^1(\Lambda^k(M)))^*$ (see for example~\cite{Char2}).   The Laplacian on $L^p(\Lambda^k(M))$, $\Delta_p$, is defined as the infinitesimal generator of this semigroup when acting on $L^p$.   We will denote the domain of the infinitesimal generator $\Delta_p$   by ${\mathcal Dom}(\Delta_p)$, and for simplicity we will often refer to $\Delta_p$ as the Laplacian acting on $L^p(\Lambda^k(M))$.

Analogously to the $L^2$ case, the spectrum of $\Delta_p$ on $k$-forms consists of all points $\lambda\in \mathbb{C}$ for which $\Delta_p-\lambda I$ fails to be invertible on   $L^p$. The  essential spectrum of  $\Delta_p$ on $k$-forms, $\sigma_\textup{ess}(p,k)$, consists of the cluster points in the spectrum and of isolated eigenvalues of  infinite multiplicity. Both the spectrum and the essential spectrum are closed subsets of $ \mathbb{C}$. Whenever the domain is clear from the context we will denote the Laplacian on $L^p(\Lambda^k(M))$ by $\Delta$, even though it varies with $p$ and $k$.

For $p$ and $p^*$ dual dimensions such that $1/p + 1/p^* =1$, it is well known that $L^p(\Lambda^k(M))$ and $L^{p^*}(\Lambda^k(M))$ are dual spaces, and   $\Delta_p$ and $\Delta_{p^*}$ are dual operators. This  implies that $\sigma(p,k)=\sigma(p^*,k),$ and the same is true for the essential spectrum. However, the nature of the points in the $L^p$ and $L^{p^*}$ spectrum can significantly differ. For example, Taylor  proved that over symmetric spaces of noncompact type, every point inside a parabolic region is an eigenvalue for the $L^p$-spectrum of the Laplacian on functions for $p>2$\cite{tay}. Similarly, Ji and Weber proved that over locally symmetric spaces of rank 1, the $L^p$-spectrum for $p\geq 2$  for the Laplacian on functions contains an open subset of $\mathbb{C}$ in which every point is an eigenvalue, whereas for $1< p < 2$ the set of $L^p$ integrable eigenvalues is a discrete set  \cites{JW1}. In the case of finite volume the opposite scenario can occur. In  \cites{JW2}, the same authors show that over certain non-compact locally symmetric space of higher rank and with finite volume every point in a parabolic region, except for a discrete set, is an eigenvalue for $1< p < 2$.  In Theorem \ref{thm1} we see that this absence in symmetry with regards to the spectrum also holds for the Laplacian on $k$-forms over the hyperbolic space $\mathbb{H}^{N+1}$. It is also reflected in the following analytic result, Theorem \ref{Weyl},  which illustrates why a Weyl criterion might not be possible on a general manifold. In Theorem \ref{thm2}, we will then find sufficient conditions on the manifold such that a Weyl criterion holds.  \begin{thm} \label{Weyl}
A complex number $\lambda$ is in the spectrum of $\Delta=\Delta_p$ on $k$-forms, if and only if one of the following holds:

$(a)$ For any $\eps>0$,
there is a $k$-form $\omega\in {\mathcal Dom}(\Delta_p)$  such that
\begin{equation}\label{approx-2}
\|\Delta\omega-\lambda\,\omega\|_{L^p}\leq \eps\|\omega\|_{L^p}
\end{equation}
or,

$(b)$ For any $\eps>0$,
there is a  $k$-form $\omega\in {\mathcal Dom}(\Delta_{p^*})$ such that
\[
\|\Delta\omega-\lambda\,\omega\|_{L^{p^*}}\leq \eps\|\omega\|_{L^{p^*}},
\]
where $p^*$ satisfies $1/p + 1/{p^*} =1$.
\end{thm}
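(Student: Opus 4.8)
The plan is to exploit the duality between $\Delta_p$ and $\Delta_{p^*}$ together with the standard Banach-space characterization of the spectrum, namely that $\lambda\notin\sigma(\Delta_p)$ if and only if $\Delta_p-\lambda I$ is bounded below \emph{and} has dense range. So I would first observe that $\lambda\in\sigma(\Delta_p)$ precisely when at least one of these two conditions fails. If $\Delta_p-\lambda I$ is not bounded below, then for every $\eps>0$ there is a nonzero $\omega\in{\mathcal Dom}(\Delta_p)$ with $\|\Delta\omega-\lambda\omega\|_{L^p}\le\eps\|\omega\|_{L^p}$, which is exactly alternative $(a)$. The substantive point is therefore to show that the failure of density of the range is equivalent to alternative $(b)$.

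For that, I would use the Hahn--Banach theorem: the range of $\Delta_p-\lambda I$ fails to be dense in $L^p(\Lambda^k(M))$ if and only if there is a nonzero continuous functional on $L^p$ that annihilates it, i.e. a nonzero $\eta\in L^{p^*}(\Lambda^k(M))$ with $\langle(\Delta_p-\lambda I)\omega,\eta\rangle=0$ for all $\omega\in{\mathcal Dom}(\Delta_p)$. Here I need the identification $(L^p)^*=L^{p^*}$, which holds for $1\le p<\infty$, and the convention $L^\infty=(L^1)^*$ quoted in the text handles the endpoint. By the definition of the dual operator recalled just before the theorem, this condition says exactly that $\eta\in{\mathcal Dom}(\Delta_{p^*})$ and $(\Delta_{p^*}-\lambda I)\eta=0$; in particular $\eta$ is a genuine eigenform, so trivially $\|\Delta\eta-\lambda\eta\|_{L^{p^*}}=0\le\eps\|\eta\|_{L^{p^*}}$ for every $\eps>0$, giving $(b)$. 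Conversely, if $(b)$ holds with a \emph{single} genuine eigenform one is done immediately; the mild subtlety is that $(b)$ only supplies \emph{approximate} eigenforms $\omega_\eps$, so I would instead argue that the existence of approximate eigenforms for $\Delta_{p^*}$ means $\Delta_{p^*}-\lambda I$ is not bounded below, hence $\lambda\in\sigma(\Delta_{p^*})=\sigma(\Delta_p)$, using the already-stated fact that dual operators have the same spectrum.

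Putting the pieces together: $\lambda\in\sigma(\Delta_p)$ $\iff$ $\Delta_p-\lambda I$ is not bounded below or does not have dense range $\iff$ ($(a)$ holds) or ($\Delta_{p^*}-\lambda I$ is not injective, equivalently not bounded below on a suitable sense, equivalently $\lambda\in\sigma(\Delta_{p^*})$) $\iff$ $(a)$ or $(b)$. The cleanest route is actually symmetric: $\lambda\in\sigma(\Delta_p)=\sigma(\Delta_{p^*})$, and for \emph{either} operator, being in the spectrum means either it is not bounded below (approximate eigenforms exist, cases $(a)$ or $(b)$ respectively) or its range is not dense, which by Hahn--Banach and duality forces the \emph{other} operator to have a kernel element, again landing in $(a)$ or $(b)$.

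I expect the main obstacle to be bookkeeping at the endpoints $p=1$ and $p=\infty$ and making the duality/closedness statements precise: one must check that $\Delta_p$ is closed and densely defined (so that Hahn--Banach applies to its range and so that $(\Delta_p)^*=\Delta_{p^*}$ in the appropriate sense), and that the annihilator of the range of $\Delta_p-\lambda I$ really consists of elements of ${\mathcal Dom}(\Delta_{p^*})$ rather than merely of distributional solutions. These facts follow from the heat-semigroup construction of $\Delta_p$ recalled at the start of this section and the duality of the semigroups $e^{-t\Delta_p}$ and $e^{-t\Delta_{p^*}}$; once they are in place the argument is the standard Banach-space spectral dichotomy applied across the duality pairing.
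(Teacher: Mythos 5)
Your proposal is correct and follows essentially the same route as the paper: both arguments rest on the dichotomy that $\lambda$ lies in the resolvent set iff $\Delta_p-\lambda I$ is bounded below and has dense range, use Hahn--Banach to convert failure of density of the range into a genuine eigenform $\eta\in{\mathcal Dom}(\Delta_{p^*})$ annihilating the range, and invoke $\sigma(\Delta_p)=\sigma(\Delta_{p^*})$ to fold that case into alternative $(b)$. The paper simply organizes this as a contrapositive (assuming neither $(a)$ nor $(b)$ and deriving boundedness of $(\Delta-\lambda)^{-1}$ on the dense range), and, like you, leaves the domain/duality bookkeeping for the adjoint largely implicit.
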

\begin{proof}
It is clear that if $(a)$  holds  then $\lambda$ is in the $L^p$-spectrum by definition. If $(b)$ holds, then $\lambda$ is again in the $L^p$-spectrum since  the $L^p$-spectrum and the $L^{p^*}$-spectrum are equal.

For the converse, suppose that neither $(a)$ nor $(b)$ hold for $\lambda$. Then, there exists $\eps_o>0$ for all  $\omega\in {\mathcal Dom}(\Delta_p)$
\[
(i) \ \ \|(\Delta -\lambda)\,\omega\|_{L^p}\geq \eps_o\|\omega\|_{L^p}
\]
and  for all  $\omega\in {\mathcal Dom}(\Delta_{p^*})$
\[
(ii) \|(\Delta -\lambda)\,\omega\|_{L^{p^*}}\geq \eps_o\|\omega\|_{L^{p^*}}.
\]
We claim that the set
\[
A = \{ \ (\Delta-\lambda)\,\omega \ \Big| \ \omega \in {\mathcal Dom}(\Delta_p) \}
\]
is dense in ${\mathcal Dom}(\Delta_p)$.  If not, then there exists a non-zero $k$-form $\eta \in {\mathcal Dom}(\Delta_{p^*})$ such that
\[
\left( (\Delta-\lambda)\omega, \eta \right) =0
\]
for all $\omega \in {\mathcal Dom}(\Delta_p)$, where $(\,\, , \,\,)$ denotes the $(L^p,L^{p^*})$-pairing. It easily follows that    $(\Delta-\lambda)\eta =0$ which contradicts $(ii)$.

By $(i)$, $(\Delta -\lambda)^{-1}$ is defined on $A$, and we have
\[
\|(\Delta -\lambda)^{-1}(\Delta-\lambda)\,\omega\|_{L^p} =\| \omega\|_{L^p} \leq \tfrac{1}{\eps_o} \|(\Delta-\lambda)\,\omega\|_{L^p}
\]
for any $\omega \in A$. It follows that $(\Delta -\lambda)^{-1}$ is bounded on $A$, and the proposition follows since $A$ is dense in ${\mathcal Dom}(\Delta_p)$.
\end{proof}

When $p=2$, the two conditions in Theorem \ref{Weyl}  are identical, and the theorem is reduced to the classical Weyl criterion.  It is then natural to ask under which additional conditions on the manifold would~\eqref{approx-2} suffice for $p\neq 2$.     We will use the estimates of the previous section to obtain the following  $L^p$-Weyl criterion for the spectrum.

\begin{thm} \label{thm2}
Suppose that $M$ is a complete open manifold with Ricci curvature and the Weitzenb\"ock tensor on $k$-forms bounded below, and such that volume of geodesic balls of radius one is uniformly bounded below. Fix $p \geq 2$. Then,  a complex number $\lambda$ is in the spectrum of $ \Delta=\Delta_p$  on $k$-forms if and only if for any $\eps>0$
there is a $k$-form $\omega\in  {\mathcal Dom}(\Delta_p)$  such that
\[
\|\Delta\omega-\lambda\,\omega\|_{L^p}\leq \eps\|\omega\|_{L^p}.
\]
\end{thm}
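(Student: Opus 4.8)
The plan is to prove the nontrivial direction: assuming $\lambda \in \sigma(p,k)$, produce approximate $L^p$-eigenforms as in \eqref{approx-2}. By Theorem \ref{Weyl} we already know that one of conditions $(a)$ or $(b)$ holds; condition $(a)$ \emph{is} the desired conclusion, so the entire difficulty is to rule out the ``bad'' case in which only $(b)$ holds, i.e. where there exist approximate eigenforms in $L^{p^*}$ (with $1 < p^* \le 2 \le p$) but not in $L^p$. The strategy is to take the $L^{p^*}$ approximate eigenforms supplied by $(b)$ and \emph{smooth} them into $L^p$ approximate eigenforms, using the resolvent operator $(\Delta+\xi^2)^{-m}$ as the smoothing device, exploiting the $L^{p^*}\to L^p$ mapping properties packaged in Theorem \ref{corl31} and Corollary \ref{corl32}.

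Concretely, here is the order of steps I would carry out. First, fix $\lambda$ in the spectrum and, arguing by contradiction, suppose $(a)$ fails; then $(b)$ holds, so for each $\eps>0$ pick $\eta = \eta_\eps \in \mathcal{Dom}(\Delta_{p^*})$ with $\|(\Delta-\lambda)\eta\|_{L^{p^*}} \le \eps \|\eta\|_{L^{p^*}}$, normalized so $\|\eta\|_{L^{p^*}}=1$. Second, set $\omega = \phi^{2/r^*-2}(\Delta+\xi^2)^{-m}\eta$ (or a similar composition using the operators $T', T''$ from Theorem \ref{corl31}) for a suitably chosen $m > n/(2r)$ and $\xi$ large; here $r^*$ is chosen so that the Lemma \ref{lem33} exponents match $p^* \mapsto p$, which is possible precisely because $p \ge 2 \ge p^*$ (so $q=p \ge p^*$, consistent with the constraint $q\ge p$ noted after Lemma \ref{lem33}), and the weight $\phi$ is bounded below away from zero by the hypothesis on volumes of unit balls, so multiplication by powers of $\phi$ is harmless on $L^p$. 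Third, since $(\Delta+\xi^2)^{-m}$ commutes with $\Delta$, compute
\[
(\Delta-\lambda)\omega = \phi^{2/r^*-2}(\Delta+\xi^2)^{-m}(\Delta-\lambda)\eta,
\]
so that $\|(\Delta-\lambda)\omega\|_{L^p} \le C \|(\Delta-\lambda)\eta\|_{L^{p^*}} \le C\eps$ by the $L^{p^*}\to L^p$ bound. Fourth, and this is the crux, I must produce a \emph{lower} bound $\|\omega\|_{L^p} \ge c > 0$ (uniformly in $\eps$), for otherwise the estimate is vacuous: if $\omega$ could be made small, nothing is gained. For this I would argue that $(\Delta+\xi^2)^{-m}$, being injective with dense range, cannot shrink the normalized $\eta_\eps$ to zero without $\eta_\eps$ itself concentrating in a way incompatible with the approximate-eigenform relation — more robustly, I would instead feed $(\Delta-\lambda)\eta$ itself through the resolvent estimates and use the resolvent identity to express $\eta$ in terms of $(\Delta+\xi^2)^{-m}\eta$ plus a controlled error, thereby transferring the unit lower bound on $\|\eta\|_{L^{p^*}}$ to a lower bound on $\|\omega\|_{L^p}$ up to the weight $\phi$ and a factor depending on $\xi,\lambda$ but not $\eps$. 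Finally, combining the upper bound $C\eps$ on $\|(\Delta-\lambda)\omega\|_{L^p}$ with the lower bound $c$ on $\|\omega\|_{L^p}$, and letting $\eps\to 0$, yields genuine $L^p$ approximate eigenforms, establishing $(a)$ and completing the proof.

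The main obstacle, as flagged above, is the fourth step: guaranteeing that smoothing does not annihilate the approximate eigenform, i.e. the uniform lower bound $\|\omega\|_{L^p}\gtrsim 1$. The clean way to handle this is to note that on $\mathcal{Dom}(\Delta_{p^*})$ one has the operator identity $\eta = (\Delta+\xi^2)^{-m}\bigl((\Delta-\lambda) + (\lambda+\xi^2)\bigr)^m \eta$ expanded binomially, isolating the term $(\lambda+\xi^2)^m (\Delta+\xi^2)^{-m}\eta$ whose $L^{p^*}$ (hence, after reweighting, $L^p$) norm is bounded below by $\|\eta\|_{L^{p^*}} - (\text{terms involving }(\Delta-\lambda)\eta) = 1 - O(\eps)$, since every remaining term carries at least one factor of $(\Delta-\lambda)\eta$ which is $O(\eps)$ after applying the bounded operators $d, \delta, (\Delta+\xi^2)^{-m}$ and their compositions controlled by Theorem \ref{corl31} and Corollary \ref{lem31}. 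Choosing $\xi$ once and for all (large enough for all those theorems to apply and for $\xi^2 > K_2$), the factor $(\lambda+\xi^2)^m$ is a fixed nonzero constant, so $\|\omega\|_{L^p} \ge c(\xi,\lambda) > 0$ for $\eps$ small. I expect the remaining bookkeeping — matching Hölder exponents in Lemma \ref{lem33}, absorbing powers of the weight $\phi$ using its two-sided bounds, and checking that all constants are independent of $\eps$ — to be routine given the machinery of Section \ref{S2}.
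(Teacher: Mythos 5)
Your overall strategy --- smoothing the $L^{p^*}$ approximate eigenforms supplied by condition $(b)$ of Theorem \ref{Weyl} through $(\Delta+\xi^2)^{-m}$ and invoking the $L^{p^*}\to L^p$ bound of Theorem \ref{corl31} with $r^*=p/2$ --- uses the right analytic ingredient, but the step you yourself flag as the crux does not close. The resolvent identity you invoke (in its correct iterated form $(\lambda+\xi^2)^m(\Delta+\xi^2)^{-m}\eta=\eta-\sum_{j=1}^{m}(\lambda+\xi^2)^{j-1}(\Delta+\xi^2)^{-j}(\Delta-\lambda)\eta$, which one should use in place of the binomial expansion, since the latter requires $\eta\in{\mathcal Dom}(\Delta_{p^*}^{m})$ and produces uncontrolled terms $(\Delta-\lambda)^{j}\eta$) yields a lower bound on $\|\omega\|_{L^{p^*}}$ of the form $c\,(1-C\eps)$. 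What you need is a lower bound on $\|\omega\|_{L^{p}}$ with $p\geq 2\geq p^*$, and on a noncompact manifold of infinite volume no ``reweighting'' converts one into the other: a form of unit $L^{p^*}$ norm spread over a region of large volume has arbitrarily small $L^{p}$ norm, and such spreading is exactly the behaviour of approximate eigenforms for exponents below $2$ (compare the $\omega_n$ of Proposition \ref{HypThm1}, supported on exponentially long slabs in $y$). So your construction may well produce $\omega_\eps$ with $\|(\Delta-\lambda)\omega_\eps\|_{L^p}\leq C\eps$ but $\|\omega_\eps\|_{L^p}\to 0$, in which case nothing follows. This is a genuine gap, not bookkeeping.

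The paper sidesteps the difficulty by never touching approximate eigenforms in $L^{p^*}$. Assuming $(a)$ fails, i.e.\ $\|(\Delta-\lambda)\omega\|_{L^p}\geq\eps_o\|\omega\|_{L^p}$ on ${\mathcal Dom}(\Delta_p)$, the operator $\Delta-\lambda$ is injective with closed range, so $\lambda$ can remain in the spectrum only if its range is not dense; the annihilator of the range then furnishes an \emph{exact} eigenform $\eta\in{\mathcal Dom}(\Delta_{p^*})$ with $\Delta\eta=\lambda\eta$. For an exact eigenform the smoothing is trivial to exploit: $(\Delta+\alpha)^{-m}\eta=(\lambda+\alpha)^{-m}\eta$, so the $L^{p^*}\to L^p$ boundedness of $(\Delta+\alpha)^{-m}$ (Theorem \ref{corl31} plus the two-sided bounds on $\phi$) shows that $\eta$ itself lies in $L^p$, indeed in ${\mathcal Dom}(\Delta_p)$, and then $(\Delta-\lambda)\eta=0$ with $\eta\neq 0$ contradicts the coercivity outright --- no quantitative transfer of a lower bound between the two norms is ever needed. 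If you replace the approximate eigenforms of your first step by this exact eigenform coming from non-density of the range, the rest of your machinery applies essentially verbatim and the proof closes.
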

\begin{proof}
By Theorem \ref{Weyl} it suffices to prove the converse. Suppose that  for some $\lambda$ there exists $\eps_o>0$ such that for all  $\omega\in  {\mathcal Dom}(\Delta_p) $
\begin{equation} \label{2_e2}
\|(\Delta -\lambda)\,\omega\|_{L^p}\geq \eps_o\|\omega\|_{L^p}
\end{equation}
Similarly to the  proof or Theorem \ref{Weyl} we consider the set
\[
A = \{ \ (\Delta-\lambda)\,\omega \mid\omega \in  {\mathcal Dom}(\Delta_p)  \ \}
\]
and suppose that it is dense in $ {\mathcal Dom}(\Delta_p)$. If not, there exists a non-zero $k$-form $\eta \in L^{p^*}$ with the properties that $\eta\in  {\mathcal Dom}(\Delta_{p^*})$ and such that $\Delta \eta =\lambda\eta$.  Then for any $\alpha >0$, and positive integer $m,$
\[
(\Delta + \alpha)^{-m} \eta =\frac{1}{(\lambda +\alpha)^m} \eta.
\]
 By Theorem \ref{corl31},  taking $r^*=p/2$, we get that the operator  $\phi^{4/p-2}(\Delta + \alpha)^{-m}$   is bounded from $L^{p^*}(\Lambda^k(M))$ to $L^p(\Lambda^k(M))$  for $\alpha$ large enough and $m>n/2$. Given the uniform lower bound on $\phi$,    $(\Delta + \alpha)^{-m}$ is  also bounded from $L^{p^*}(\Lambda^k(M))$ to $L^p(\Lambda^k(M))$. It follows that $\eta \in L^p(\Lambda^k(M))$, and  consequently, $\eta\in  {\mathcal Dom}(\Delta_p)$. By \eref{2_e2},
\[
\|(\Delta -\lambda)\,\eta\|_{L^p}\geq \eps_o\|\eta\|_{L^p}
\]
and hence $\eta=0$ since $\Delta \eta =\lambda\eta$, which is a contradiction.
\end{proof}

 Our final result for this section is the relationship between the $L^p$-spectra for three consecutive orders of forms.
\begin{prop}\label{kspec}
Suppose that $M^n$ is a complete manifold with Ricci curvature bounded below. Set $0\leq k \leq n-1$ and suppose that the Weitzenb\"ock tensor on $(k-1)$, $k$ and $(k+1)$-forms  is bounded below over $M$.  Then, for any  $1\leq p \leq \infty$
\[
\sigma (p,k)  \setminus\{0\} \subset \sigma (p,k-1) \cup \sigma (p,k+1)
\]
The same result also holds for the essential spectrum\footnote{We use the notation $\sigma (p,-1) =\emptyset$  and that the Weitzenb\"ock tensor on $(-1)$-forms is trivially bounded below.}.
\end{prop}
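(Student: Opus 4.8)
The plan is to show that if $\lambda \neq 0$ is \emph{not} in $\sigma(p,k-1)\cup\sigma(p,k+1)$, then $\Delta-\lambda$ is invertible on $L^p(\Lambda^k(M))$, hence $\lambda\notin\sigma(p,k)$. The key algebraic observation is the standard Hodge-theoretic identity: for a $k$-form $\omega$,
\[
\Delta(d\omega)=d(\Delta\omega),\qquad \Delta(\delta\omega)=\delta(\Delta\omega),
\]
so $d$ and $\delta$ intertwine the Laplacians on consecutive degrees. Moreover, on $k$-forms one has the decomposition $\Delta = d\delta+\delta d$, and if $\Delta\omega = \lambda\omega$ with $\lambda\neq 0$ then $\omega = \lambda^{-1}(d\delta\omega + \delta d\omega)$; the first summand is an exact $k$-form built from the $(k-1)$-form $\delta\omega$ and the second is a coexact $k$-form built from the $(k+1)$-form $d\omega$. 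This is the heuristic reason one expects a nonzero point of $\sigma(p,k)$ to ``come from'' degrees $k\pm1$.

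First I would set up the resolvent-level version of this identity. Pick $\alpha=\xi^2$ large (as permitted by the curvature hypotheses and Corollary~\ref{lem31}) so that $(\Delta+\alpha)^{-1}$ and $d(\Delta+\alpha)^{-1}$, $\delta(\Delta+\alpha)^{-1}$ are all bounded on the relevant $L^p$ spaces, and so that $\lambda+\alpha\neq 0$. Since $\lambda\notin\sigma(p,k-1)$ the operator $(\Delta-\lambda)^{-1}$ is bounded on $L^p(\Lambda^{k-1}(M))$, and similarly on $L^p(\Lambda^{k+1}(M))$. Using $\Delta d=d\Delta$ and $\Delta\delta=\delta\Delta$ on the appropriate cores, together with the spectral-calculus identity $(\Delta-\lambda)^{-1}-(\lambda+\alpha)^{-1}\big(I+(\lambda+\alpha)(\Delta-\lambda)^{-1}(\Delta+\alpha)^{-1}\big)$-type manipulations, I would write down an explicit candidate inverse $R$ for $\Delta-\lambda$ on $L^p(\Lambda^k(M))$. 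Concretely, starting from $\Delta = d\delta+\delta d$ one gets on a dense domain
\[
(\Delta-\lambda)^{-1} \;=\; \frac{1}{\lambda}\Big( \delta\,(\Delta-\lambda)^{-1}_{k+1}\, d \;+\; d\,(\Delta-\lambda)^{-1}_{k-1}\,\delta \;-\; I\Big),
\]
where $(\Delta-\lambda)^{-1}_{j}$ denotes the resolvent on $j$-forms; the point is that the right-hand side is manifestly a bounded operator on $L^p$, because $d$ and $\delta$ applied to $(\Delta-\lambda)^{-1}_j$ are bounded (factor through $(\Delta+\alpha)^{-1}$ using the resolvent identity and the boundedness of $d(\Delta+\alpha)^{-1}$, $\delta(\Delta+\alpha)^{-1}$ from Corollary~\ref{lem31} / Theorem~\ref{corl31}). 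Then one checks $(\Delta-\lambda)R = R(\Delta-\lambda)=I$ on a dense domain and extends by continuity.

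The main obstacle I anticipate is \emph{not} the algebra but the functional-analytic bookkeeping: verifying that $d$ and $\delta$ map $\mathcal{Dom}(\Delta_p)$ into the correct $L^p$ spaces of $(k\pm1)$-forms and that the commutation relations $\Delta d = d\Delta$, $\delta d (\Delta-\lambda)^{-1} = (\Delta-\lambda)^{-1}\delta d$ etc.\ hold as identities of bounded (or densely defined closable) operators on $L^p$, not merely formally on smooth compactly supported forms. This requires knowing that smooth compactly supported forms are a core for $\Delta_p$ (which follows from the heat-semigroup construction of $\Delta_p$ and the stochastic-completeness/domination estimates already invoked), and that the factorizations $d(\Delta+\alpha)^{-m}$, $\delta(\Delta+\alpha)^{-m}$ are bounded on $L^p$ for suitable $m$ — precisely what Corollary~\ref{lem31} and Theorem~\ref{corl31} supply under the stated lower bounds on $\mathcal{W}_{k-1},\mathcal{W}_k,\mathcal{W}_{k+1}$ and $\mathrm{Ric}$. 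Once the intertwining is justified on a core and the candidate resolvent $R$ is shown bounded, invertibility of $\Delta-\lambda$ on $L^p(\Lambda^k(M))$ is immediate, giving $\sigma(p,k)\setminus\{0\}\subset\sigma(p,k-1)\cup\sigma(p,k+1)$. For the essential spectrum I would run the same argument at the level of the Calkin algebra: the identity above shows $(\Delta-\lambda)$ is Fredholm on $L^p(\Lambda^k)$ whenever it is Fredholm on both neighboring degrees (the ``error'' terms are compact because they factor through resolvent differences that are compact under the given bounds), so a point outside $\sigma_{\mathrm{ess}}(p,k-1)\cup\sigma_{\mathrm{ess}}(p,k+1)$ lies outside $\sigma_{\mathrm{ess}}(p,k)$, modulo the point $0$.
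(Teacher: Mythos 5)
Your overall strategy is the same as the paper's: decompose $\Delta=d\delta+\delta d$, use the intertwining $d\Delta=\Delta d$, $\delta\Delta=\Delta\delta$ to push the resolvent onto degrees $k\pm1$, and control the resulting operators via the $L^p$-boundedness of $d(\Delta+\alpha)^{-m}$ and $\delta(\Delta+\alpha)^{-m}$ supplied by Corollary~\ref{lem31} and Theorem~\ref{corl31}. Your candidate inverse is algebraically correct (one checks $(\Delta-\lambda)R=I$ exactly as you indicate). However, there is a genuine gap at the step you call ``manifestly a bounded operator'': in the terms $\delta(\Delta-\lambda)^{-1}_{k+1}d$ and $d(\Delta-\lambda)^{-1}_{k-1}\delta$, the \emph{rightmost} $d$ and $\delta$ act on the bare input $k$-form, and these are unbounded on $L^p$. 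Your parenthetical fix only addresses the left compositions $\delta\circ(\Delta-\lambda)^{-1}_{k+1}$ and $d\circ(\Delta-\lambda)^{-1}_{k-1}$; it does nothing for the unregularized $d,\delta$ sitting next to the input, and commuting them past the resolvent reintroduces $(\Delta-\lambda)^{-1}_k$, which is exactly what you are trying to construct. The paper's way around this is the essential content of its proof: it first uses the resolvent identity to show that boundedness of $(\Delta-\lambda)^{-1}$ is equivalent to boundedness of $(\Delta-\lambda)^{-1}\Delta(\Delta+\alpha)^{-2m}$, and then splits the regularizer into two factors so that, as in \eqref{res1}, every occurrence of $d$ and $\delta$ appears in the bounded combination $d(\Delta+\alpha)^{-m}$ or $\delta(\Delta+\alpha)^{-m}$, with the resolvents on $(k\pm1)$-forms sandwiched in the middle. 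Your sketch names the right ingredients but does not assemble them, and the assembly is where the work is.

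Two smaller points. First, the paper establishes the containment on $L^1$ and then transfers it to $L^p$ by interpolating the auxiliary operators against the known $L^2$ statement; you should say explicitly for which $p$ your invocation of Corollary~\ref{lem31}/Theorem~\ref{corl31} gives $L^p\to L^p$ bounds. Second, your treatment of the essential spectrum rests on the claim that certain error terms ``are compact because they factor through resolvent differences that are compact under the given bounds''; on a noncompact manifold resolvent differences are in general \emph{not} compact, so this cannot be the mechanism. A safer route is to note that a point outside $\sigma_{\textup{ess}}(p,k-1)\cup\sigma_{\textup{ess}}(p,k+1)$ has a punctured neighborhood inside $\rho(p,k-1)\cap\rho(p,k+1)$, apply the containment of resolvent sets there, and control eigenvalue multiplicities via the injectivity of $\omega\mapsto(d\omega,\delta\omega)$ on a $\lambda$-eigenspace with $\lambda\neq0$.
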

\begin{proof} By Corollary \ref{lem31} the resolvent operator  $(\Delta + \alpha)^{-s}$ is bounded on $L^1(\Lambda^k(M))$ for any $s>0$, provided that $\alpha$ is large enough.

Set $\lambda\in \mathbb{C}\setminus 0$ such that $\lambda \in \rho (1,k-1) \cap \rho (1,k+1)$, where $\rho(p,k)=\mathbb{C} \setminus \sigma (p,k)$ is the resolvent set.  By the resolvent equation,  for any positive integer $m$
\[
\begin{split}
&
(\Delta-\lambda)^{-1}=(\Delta+\alpha)^{-1}+(\lambda+\alpha)(\Delta+\alpha)^{-2}
+\cdots+(\lambda+\alpha)^{m-1}(\Delta+\alpha)^{-m}\\
&-\frac{(\lambda+\alpha)^m}{\lambda}(\Delta+\alpha)^{-m}+\frac{(\lambda+\alpha)^m}{\lambda}(\Delta-\lambda)^{-1}\Delta(\Delta+\alpha)^{-m}.
\end{split}
\]
Thus $(\Delta-\lambda)^{-1}$ is bounded on $L^1(\Lambda^k(M))$ if and only if   $(\Delta-\lambda)^{-1}\Delta(\Delta+\alpha)^{-m}$ is bounded on $L^1(\Lambda^k(M))$. Since $m$ is arbitrary, we replace $m$ by $2m$ and write
\begin{equation} \label{res1}
\begin{split}
 &  (\Delta -\lambda )^{-1} \Delta (\Delta + \alpha)^{-2m}     \\
     &  \ \  =d (\Delta+\alpha)^{-m}  (\Delta -\lambda )^{-1} \delta (\Delta+\alpha)^{-m} +\delta
(\Delta+\alpha)^{-m}  (\Delta -\lambda )^{-1}  d (\Delta+\alpha)^{-m}.
\end{split}
\end{equation}
By Corollary \ref{lem31} the operators  $d(\Delta+\alpha)^{-m} $ and $\delta (\Delta+\alpha)^{-m}$ are bounded on $L^1$ under our curvature assumptions, for any $m>1/2$ and $\alpha$  large enough. At the same time, if $\lambda\in  \rho (1,k-1) \cap \rho (1,k+1)$ we have that  the first resolvent operator on the right side of \eref{res1}, $(\Delta -\lambda )^{-1}  $, is bounded on $L^1(\Lambda^{k-1}(M))$, and the second is bounded on $L^1(\Lambda^{k+1}(M))$ (the first operator vanishes when $k=0$). As a result, the operator on the right side of \eref{res1} is bounded on $L^1(\Lambda^k(M))$, and in consequence $\lambda\in \rho (1,k)$ if $\lambda\neq 0$  by the resolvent equation above.

 The result is known on $L^2$ by  \cite{Cha-Lu-2}. Therefore, using an interpolation argument, we get that $ (\Delta+\alpha)^{-m} $, $d(\Delta+\alpha)^{-m} $ and $\delta (\Delta+\alpha)^{-m}$ are bounded on $L^p$  under our curvature assumptions for any large enough $\alpha>0$.  Using a similar argument as the one above we conclude that the containment is true for the $L^p$-resolvent as well.
\end{proof}

\begin{remark}
In a recent article, we have demonstrated that, independently of the curvature conditions on the manifold, the $L^2$-spectrum of the Laplacian on 1-forms should always contain the spectrum of the Laplacian on functions.  Proposition \ref{kspec} illustrates that this is also the case for the $L^p$ spectrum. At the same time, Theorem \ref{thm1} is consistent with the above Proposition. However, the unimodality with respect to $k$  that appears in the form essential spectrum of the hyperbolic space is not a general characteristic of all manifolds, as there exist 4-dimensional flat manifolds whose 2-form essential spectrum is smaller than their 1-form essential spectrum \cite{Cha-Lu-2}.
\end{remark}

\section{Finite Propagation Speed}

In this section, we will provide a   proof for the finite propagation speed of the wave equation for the Hodge Laplacian on forms.

For the case of the Laplacian on functions, we refer the interested reader to \cites{Jaf}. Sikora in \cite{Sik}*{Theorem 6} considered operators more general than the Hodge Laplacian on forms, but our simplified argument works well for  our case.

\begin{thm} \label{thmFP}
Let $M$ be a complete Riemannian manifold, and $H=\Delta - \gamma_o$ be a nonnegative operator on the space of $L^2$ integrable $k$-forms for some $\gamma_o\geq 0$. Then the wave kernel corresponding to $H$ has finite propagation speed which is at most 1. In particular,   if $G(t,x,y)$ denotes the kernel of the operator $\cos(t\sqrt{H})$, then $G(t,x,y)=0$ whenever $d(x,y) \geq t$.
\end{thm}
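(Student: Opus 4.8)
The plan is to establish finite propagation speed via the standard energy method applied to the wave equation $\partial_t^2 u + H u = 0$, using the fact that $H = \Delta - \gamma_o$ differs from the Hodge Laplacian by a constant and that the principal symbol of $\Delta$ on $k$-forms is $|\xi|^2 \cdot \mathrm{Id}$, so the ``speed of propagation'' of the hyperbolic operator $\partial_t^2 + H$ is governed entirely by the Riemannian metric. First I would fix a point $y \in M$ and a radius $R > 0$, let $\omega_0$ be a smooth compactly supported $k$-form with $\mathrm{supp}\,\omega_0 \subset B_y(R)$, and consider the solution $u(t) = \cos(t\sqrt{H})\omega_0$, which lies in $L^2(\Lambda^k(M))$ for all $t$ by spectral calculus and solves $u'' + Hu = 0$ with $u(0) = \omega_0$, $u'(0) = 0$. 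The claim $G(t,x,y) = 0$ for $d(x,y) \geq t$ is equivalent to showing $\mathrm{supp}\,u(t) \subset \overline{B_y(R + |t|)}$ for all such data, and then letting $R \to 0$; by a density/duality argument this yields the stated vanishing of the kernel.

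The core step is the energy estimate. For a fixed spacetime point $(t_0, x_0)$ with $t_0 > 0$, I would consider the backward light cone $K = \{(t,x) : 0 \le t \le t_0,\ d(x,x_0) \le t_0 - t\}$ and define the local energy at time slice $t$ by
\[
E(t) = \frac{1}{2}\int_{B_{x_0}(t_0 - t)} \left( |u'(t)|^2 + |\nabla u(t)|^2 + \mathcal{W}_k + \gamma_o)|u(t)|^2 \right),
\]
or more robustly $E(t) = \tfrac12\int_{B_{x_0}(t_0-t)} (|u'|^2 + \langle (\Delta+\gamma_o)u, u\rangle)$ interpreted via $\langle\nabla^*\nabla u,u\rangle = |\nabla u|^2 + (\text{divergence term})$. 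Differentiating in $t$, the interior contribution gives $\int (\langle u'', u'\rangle + \langle (\Delta + \gamma_o)u, u'\rangle)$; integrating by parts in space, the Laplacian term produces $\int \langle u'', u'\rangle$ cancelled against the equation $u'' = -Hu = -(\Delta+\gamma_o)u + \ldots$ — wait, more carefully: $u'' = -Hu$, so $\langle u'',u'\rangle + \langle Hu, u'\rangle = 0$ pointwise, and integrating the $\langle Hu, u'\rangle = \langle (\Delta+\gamma_o)u - 2\gamma_o u + \ldots\rangle$ term by parts over the shrinking ball produces exactly a boundary flux term plus $\frac{d}{dt}$ of the elastic energy. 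The shrinking-domain term from differentiating the region of integration contributes $-\tfrac12\int_{\partial B_{x_0}(t_0-t)} (|u'|^2 + |\nabla u|^2 + \ldots)$. The key inequality is that the boundary flux term $\int_{\partial B} \langle \nabla_\nu u, u'\rangle \, dS$ is bounded in absolute value by $\tfrac12\int_{\partial B}(|\nabla u|^2 + |u'|^2)\,dS$ (Cauchy–Schwarz plus $|\nabla_\nu u| \le |\nabla u|$), so that $E'(t) \le 0$. Since $E(0) = 0$ (as $u(0), u'(0)$ both vanish on $B_{x_0}(t_0)$ when $d(x_0,y) \ge t_0 + R$), we get $E(t) \equiv 0$, hence $u \equiv 0$ on $K$, in particular $u(t_0, x_0) = 0$.

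The main obstacle is the functional-analytic regularity issue: $u(t) = \cos(t\sqrt H)\omega_0$ is a priori only an $L^2$-valued function of $t$, and the energy method requires enough smoothness to justify integration by parts and differentiation under the integral. I would handle this by choosing $\omega_0 \in \mathrm{Dom}(H^N)$ for large $N$ (e.g. smooth compactly supported, which lies in every $\mathrm{Dom}(H^N)$), so that $u \in C^\infty([0,\infty); \mathrm{Dom}(H^{N-1}))$ by spectral calculus and hence $u$ is genuinely smooth on spacetime by elliptic regularity for $\Delta$ (the Hodge Laplacian is elliptic); this makes all the integrations by parts on the compact region $K$ rigorous. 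A secondary technical point is that $\partial B_{x_0}(t_0 - t)$ need not be smooth for all $t$ (cut locus issues), but this is circumvented in the standard way by noting $d(\cdot, x_0)$ is Lipschitz with $|\nabla d| \le 1$ a.e., so one can either work with the coarea formula directly or approximate; alternatively, one phrases the whole argument using $\frac{d}{dt}\int_{\{d(x,x_0) < t_0 - t\}}$ and Lebesgue differentiation, which only needs the Lipschitz bound. I would also remark that the role of the lower bound on $\gamma_o$ / nonnegativity of $H$ is merely to ensure $\sqrt H$ and $\cos(t\sqrt H)$ are well-defined bounded operators via the spectral theorem; the finite speed itself comes purely from the metric via $|\nabla d|\le 1$.
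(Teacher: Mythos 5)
Your overall strategy---an energy estimate on the backward light cone, with shrinking balls $B_{x_0}(t_0-t)$, the boundary flux controlled by Cauchy--Schwarz, and the functional-analytic regularity handled by taking data in $\mathrm{Dom}(H^N)$---is exactly the paper's approach. However, there is one genuine gap: your choice of energy density does not work for the Hodge Laplacian on $k$-forms under the stated hypotheses. You propose $E(t)=\tfrac12\int_B\bigl(|u'|^2+|\nabla u|^2+\langle(\mathcal W_k+\gamma_o)u,u\rangle\bigr)$ (or the variant written via $\langle(\Delta+\gamma_o)u,u\rangle$ and $\nabla^*\nabla$). Integrating by parts through the rough Laplacian forces the Weitzenb\"ock term into the computation: after the cancellation of $\langle\nabla u_t,\nabla u\rangle$ you are left with an interior term $-\int_B\langle\mathcal W_k u,u_t\rangle$ that must be absorbed into $E$, and the density itself must be pointwise nonnegative in order to conclude $u\equiv0$ from $E\equiv0$. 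Both steps require a lower bound on $\mathcal W_k$, which Theorem \ref{thmFP} deliberately does \emph{not} assume (no curvature hypothesis at all); indeed the point of this section of the paper is to avoid bounded-geometry-type assumptions. The paper sidesteps this by taking the elastic energy density to be $|dG|^2+|\delta G|^2+\gamma_o|G|^2$, which is pointwise nonnegative and pairs with $\Delta=d\delta+\delta d$ via Stokes with no curvature term whatsoever; the boundary flux then takes the form $\int_{\partial B}\bigl(\langle dG,\nu\wedge G_t\rangle-\langle\nu\wedge\delta G,G_t\rangle\bigr)$ and is bounded by $\tfrac12\int_{\partial B}(|G_t|^2+|dG|^2+|\delta G|^2)$ after splitting $G_t$ into normal and tangential parts on the boundary. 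That substitution is the one missing idea; the rest of your argument then goes through.

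Two smaller points. First, once you add $\gamma_o|u|^2$ (equivalently, replace $\langle Hu,u\rangle$ by $\langle Hu,u\rangle+2\gamma_o|u|^2$) to make the density coercive, the interior terms no longer cancel exactly: a cross term $2\gamma_o\int_B\langle u_t,u\rangle$ survives, so you do not get $E'(t)\le 0$ as you assert, but rather $E'(t)\le\sqrt{\gamma_o}\,E(t)$ by Cauchy--Schwarz; since $E(0)=0$, Gronwall still yields $E\equiv0$. Second, you have a recurring sign slip ($\Delta+\gamma_o$ versus $H=\Delta-\gamma_o$); it does not affect the structure of the argument but should be cleaned up.
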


\begin{proof}
Fix $x,y \in M$  and let $\eps>0$. For $t<d(x,y)-\eps$ consider the cone
\[
C_{\eps} =\{ (z,t) \in M\times \mathbb{R}^+ \, \big| \; d(z,y) < d(x,y) -t-\eps \}.
\]
We will show  that $ G(t,x,z) =0$ for all $(z,t) \in C_\eps$.

Let $d=d(x,y)$ and denote by $B_y(r)$ the ball of radius $r$ centered at $y$ and by $\partial B_y(r)$ its boundary. Define
\[
E(t)= \frac 12 \int_{B_y (d-t-\eps)} \left[ \; |G_t|^2 + |dG|^2 +|\delta G|^2 +\gamma_o |G|^2 \; \right].
\]
Then
\begin{equation} \label{2_e1}
\begin{split}
E'(t) = & \int_{B_y (d-t-\eps)} \left[ \; \<G_{tt}, G_t \> + \<dG_t ,dG\> +\<\delta G_t, \delta G\>  + \gamma_o \< G_t, G\> \; \right]\\
&- \frac 12 \int_{\partial B_y (d-t-\eps)} \left[ \; |G_t|^2 + |dG|^2 +|\delta G|^2 +\gamma_o |G|^2 \; \right].
\end{split}
\end{equation}
Integration by parts gives
\[
\begin{split}
& \int_{B_y (d-t-\eps)} \left[ \; \< G_{tt}, G_t \> + \<dG_t , dG\> +\<\delta G_t, dG\>+ \gamma_o \<G_t, G\> \; \right] \\
&=\int_{B_y (d-t-\eps)} \left[ \; -\<\Delta G, G_t \> + \<dG_t , dG\> +\<\delta G_t, \delta G\> +2\gamma_o \<G_t, G\>   \; \right] \\
&= \int_{\partial B_y (d-t-\eps)} \left[ \; \<d G, \nu \wedge G_t \> - \< \nu \wedge \delta G, G_t\> \; \right]  +2\gamma_o  \int_{B_y (d-t-\eps)} \<G_t, G\> ,
\end{split}
\]
where $\nu$ is the co-normal to the outward unit vector field on the boundary.
By decomposing $G_t$ into its normal and tangential parts on the boundary and the Cauchy-Schwarz inequality we can prove that
\[
 \int_{\partial B_y (d-t-\eps)} \left[ \; \<d G, \nu \wedge G_t \> - \< \nu \wedge \delta G, G_t\>  \; \right] \leq \frac12 \int_{\partial B_y (d-t-\eps)} \left[ \; |G_t|^2+ |dG|^2 + |\delta G|^2 \; \right].
\]
 Substituting the above estimates into \eref{2_e1} we get
\[
E'(t)\leq  2 \gamma_o \int_{B_y (d-t-\eps)}  \<G_t, G \>  - \frac{\gamma_o}{2} \int_{\partial B_y (d-t-\eps)}    |G|^2 \leq  \sqrt{\gamma_o} E(t),
\]
where we have used the   Cauchy-Schwarz inequality  $2|\< G_t,  G \>|\leq \sqrt{\gamma_o^{-1}}|G_t|^2+\sqrt{\gamma_o}|G|^2 $. Since $E(0)=0$, and $\eps>0$ is arbitrary, we have that $E(t) = 0$ for all $t \leq d(x,y).$

\end{proof}

\section{The $L^p$-resolvent set} \label{S4}
We begin this section by defining the \emph{exponential rate of volume growth}  of a manifold $M$. The rate of volume growth of $M$, which we denote by $\gamma$, is the infimum of all real numbers satisfying the property: for any $\eps>0$, there is a constant $C(\eps)$, depending only on $\eps$ and the dimension of the manifold, such that for any $x\in M$ and any $R\geq 1$, we have
\begin{equation} \label{3-1a}
{\rm vol}(B_x(R))\leq C(\eps) {\rm vol}(B_x(1))e^{(\gamma+\eps)R}.
\end{equation}
$\gamma$ is defined to be $\infty$ if for any $\gamma>0$ and  any $C>0$, we can find a pair of $(x,R)$ such that the above inequality is reversed  with $C(\eps)=C$.

By the Bishop-Gromov Volume Comparison Theorem, if ${\rm Ric}(M)\geq -K$, then we have $\gamma\leq K$.
 The  exponential rate of volume growth  is $0$  for $\R^n$, and is $n-1$ for  the $n$-dimensional hyperbolic space.

 If the volume of the manifold is finite, then we have the following result
 \begin{prop}
 Let $\gamma$ be the exponential rate of volume growth of $M$ and suppose that $M$ has finite volume. Then for any $\eps>0$, there exists a constant $c(\eps)$, depending only on $\eps$ and the dimension of $M$, such that for any $x\in M$ and $R>1$,
\begin{equation}\label{3-1b}
{\rm vol}(M)-{\rm vol}(B_x(R))\geq c(\eps)\,{\rm vol}(B_x(1)) e^{-(\gamma+\eps)R}
\end{equation}
 \end{prop}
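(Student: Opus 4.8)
The plan is to derive the lower bound \eqref{3-1b} from the definition of the exponential rate of volume growth \eqref{3-1a} by a complementary-volume argument, exploiting the fact that $M$ has finite total volume. First I would fix $x \in M$ and $R > 1$, and observe that the complement $M \setminus B_x(R)$ is nonempty (otherwise $\mathrm{vol}(M) = \mathrm{vol}(B_x(R))$ and we must handle this degenerate case separately, or note that the inequality is vacuous since we may take any point $y$ far from $x$; if $M$ is bounded the statement needs a mild reinterpretation, but in the noncompact case of interest $M \setminus B_x(R) \neq \emptyset$). Pick a point $y \in M$ with $d(x,y) = \rho$ for some $\rho$ slightly larger than $R$ — more precisely, since $\mathrm{vol}(M) > \mathrm{vol}(B_x(R))$, there is a point $y$ with $d(x,y) \geq R$, and by letting $y$ range we can find such $y$ with $d(x,y)$ as close to $R$ as we like (or exactly some value $\rho \geq R$).

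The key geometric step is the inclusion $B_y(1) \subset B_x(\rho + 1) \setminus B_x(\rho - 1)$ when $d(x,y) = \rho$, so that
\[
\mathrm{vol}(B_y(1)) \leq \mathrm{vol}\big(B_x(\rho+1)\big) - \mathrm{vol}\big(B_x(\rho - 1)\big) \leq \mathrm{vol}(M) - \mathrm{vol}(B_x(\rho - 1)).
\]
Choosing $\rho$ appropriately (say $\rho \leq R + 1$, so that $B_x(\rho - 1) \subset B_x(R)$ or a controlled variant), this already gives $\mathrm{vol}(M) - \mathrm{vol}(B_x(R))$ bounded below by $\mathrm{vol}(B_y(1))$ up to shifting $R$ by a bounded amount. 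It then remains to compare $\mathrm{vol}(B_y(1))$ with $\mathrm{vol}(B_x(1))$: by the volume comparison inequality already used in the paper (cf.\ \eqref{vol2-4} and the estimate following it, or directly Bishop–Gromov with the lower Ricci bound implicit in a finite-volume exponential-growth setting — actually one only needs \eqref{3-1a} itself applied at $y$), we have $\mathrm{vol}(B_x(1)) \leq \mathrm{vol}(B_y(1 + d(x,y))) \leq C(\eps)\,\mathrm{vol}(B_y(1)) e^{(\gamma+\eps)(d(x,y)+1)}$, hence $\mathrm{vol}(B_y(1)) \geq c(\eps)\,\mathrm{vol}(B_x(1)) e^{-(\gamma+\eps)(R + O(1))}$. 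Absorbing the $O(1)$ shift into the constant $c(\eps)$ and, if necessary, slightly enlarging $\eps$ to a new $\eps'$ and renaming, yields \eqref{3-1b}.

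The main obstacle I anticipate is purely bookkeeping rather than conceptual: one must be careful that the point $y$ realizing a distance near $R$ actually exists (this uses completeness and noncompactness, together with the finiteness of volume to guarantee $M \setminus B_x(R) \neq \emptyset$), and one must track the additive constants in the exponent carefully when passing between $B_x(R)$, $B_x(\rho - 1)$, and $B_y(1+\rho)$ so that the final exponent reads $-(\gamma+\eps)R$ and not $-(\gamma+\eps)(R+c)$ with an uncontrolled $c$ — the standard fix is to prove the inequality with exponent $-(\gamma+\eps)(R+2)$ first and then observe $e^{-(\gamma+\eps)(R+2)} = e^{-2(\gamma+\eps)} e^{-(\gamma+\eps)R}$, folding the fixed factor $e^{-2(\gamma+\eps)}$ into $c(\eps)$. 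One should also double-check the edge case $R$ only slightly larger than $1$, where $B_x(\rho-1)$ may be very small, but there the desired lower bound is weakest and the argument is easiest.
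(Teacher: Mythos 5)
Your proposal is correct and follows essentially the same route as the paper: place a unit ball at a point $y$ with $d(x,y)=R+1$ inside $M\setminus B_x(R)$, then compare ${\rm vol}(B_y(1))$ with ${\rm vol}(B_x(1))$ by applying the growth bound \eqref{3-1a} at $y$ with radius $R+2$ and folding the resulting fixed exponential factor into $c(\eps)$. The bookkeeping concerns you raise are handled exactly as you suggest, and no enlargement of $\eps$ is needed.
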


 \begin{proof}
 For any $R>1$, let $y\in M$ such that $d(x,y)=R+1$. Then since $M\backslash B_x(R))\supset B_y(1)$, we    have
\[
{\rm vol}(M)-{\rm vol}(B_x(R))\geq {\rm vol}(B_{y}(1))\]
By the definition of $\gamma$,  there is a $C(\eps)$ such that
\[
{\rm vol}(B_y(R+2))\leq C(\eps) {\rm vol}(B_y(1))e^{(\gamma+\eps)(R+2)}.
\]
Since $B_x(1)\subset B_y(R+2)$, we have
\[
{\rm vol}(B_x(1))\leq {\rm vol}(B_y(R+2)).
\]
The proposition follows by combining the above three inequalities.
 \end{proof}

By the above proposition, it is clear that in general, $\gamma\neq 0$ even if the volume of the manifold is finite. For example, let $M$ be a complete Riemannian manifold of constant curvature $-1$ which has finite volume. For  a fixed point $x_o$, since ${\rm vol}(B_x(1))\sim e^{-(n-1) \,d(x,x_o)}$, we have $\gamma=n-1\neq 0$.

\begin{corl}
Let $x_o\in M$ be a fixed point. $\gamma$ is bounded below by the volume entropy, that is,
\[
\gamma\geq \limsup_{R\to\infty}\frac{\log {\rm vol}(B_{x_o}(R))}{R},
\]
 and
\[
\gamma\geq -\limsup_{R\to\infty}\frac{\log ({\rm vol}(M)-{\rm vol}(B_{x_o}(R)))}{R}
\]
if  ${\rm vol}(M)<\infty$.
\end{corl}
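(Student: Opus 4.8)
Both inequalities relating $\gamma$ to the volume entropy (and its "co-volume" analogue when $\mathrm{vol}(M)<\infty$).

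The plan is to read off each inequality directly from the two growth estimates already established: the defining inequality \eqref{3-1a} for $\gamma$, and the reverse inequality \eqref{3-1b} from the preceding Proposition. First I would fix the base point $x_o$ and write $\gamma$ as an infimum over all $\gamma'$ for which \eqref{3-1a} holds. Given such a $\gamma'$ and any $\eps>0$, applying \eqref{3-1a} at $x=x_o$ with exponent $\gamma'+\eps$ gives $\mathrm{vol}(B_{x_o}(R))\leq C(\eps)\,\mathrm{vol}(B_{x_o}(1))\,e^{(\gamma'+\eps)R}$ for all $R\geq 1$; taking logarithms, dividing by $R$, and letting $R\to\infty$ kills the constant and the $\mathrm{vol}(B_{x_o}(1))$ term, yielding
\[
\limsup_{R\to\infty}\frac{\log \mathrm{vol}(B_{x_o}(R))}{R}\leq \gamma'+\eps.
\]
Letting $\eps\to 0$ and then taking the infimum over admissible $\gamma'$ gives the first claim $\gamma\geq \limsup_{R\to\infty}\frac{\log \mathrm{vol}(B_{x_o}(R))}{R}$. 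One should note the mild subtlety that the definition of $\gamma$ is phrased with the $\eps$ built in, so the cleanest route is to use directly the fact that \eqref{3-1a} holds with $\gamma$ itself on the right (for every $\eps>0$), and absorb everything into the $\limsup$.

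For the second inequality I would assume $\mathrm{vol}(M)<\infty$ and invoke \eqref{3-1b} at $x=x_o$: for any $\eps>0$ there is $c(\eps)>0$ with
\[
\mathrm{vol}(M)-\mathrm{vol}(B_{x_o}(R))\geq c(\eps)\,\mathrm{vol}(B_{x_o}(1))\,e^{-(\gamma+\eps)R}
\]
for all $R>1$. Taking logarithms, dividing by $R$, and letting $R\to\infty$ removes the constant and gives $\limsup_{R\to\infty}\frac{\log(\mathrm{vol}(M)-\mathrm{vol}(B_{x_o}(R)))}{R}\geq -(\gamma+\eps)$, hence $\gamma\geq -\limsup_{R\to\infty}\frac{\log(\mathrm{vol}(M)-\mathrm{vol}(B_{x_o}(R)))}{R}-\eps$; letting $\eps\to 0$ finishes it. (Here $\mathrm{vol}(M)-\mathrm{vol}(B_{x_o}(R))>0$ for all $R$ since $M$ is noncompact, so the logarithm is well defined.)

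There is no real obstacle here — the Corollary is essentially a restatement of \eqref{3-1a} and \eqref{3-1b} in terms of exponential rates, and the only thing to be careful about is the bookkeeping with the $\eps$'s built into the definition of $\gamma$ and into the constants $C(\eps)$, $c(\eps)$, making sure each is sent to zero in the correct order (after the $\limsup$ in $R$ has already absorbed the constants). I would also remark in passing that combined with Bishop–Gromov ($\gamma\leq K$ when $\mathrm{Ric}\geq -K$), this pins down $\gamma$ exactly in the constant-curvature finite-volume examples mentioned just above.
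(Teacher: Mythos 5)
Your proposal is correct and follows essentially the same route as the paper: apply \eqref{3-1a} (resp.\ \eqref{3-1b}) at $x=x_o$, take logarithms, divide by $R$, let $R\to\infty$ so the constants and the $\mathrm{vol}(B_{x_o}(1))$ term vanish, and then send $\eps\to 0$. The bookkeeping with the order of limits and the sign in the finite-volume case is handled correctly.
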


\begin{proof}
 By~\eqref{3-1a}, we have
\[
\gamma+\eps\geq \frac{\log {\rm vol}(B_{x_o}(R))}{R}-\frac{\log (C(\eps){\rm vol}(B_{x_o}(1)))}{R}.
\]
The first case follows by  letting $R\to\infty$ and then $\eps\to 0$.

Similarly,
if ${\rm vol}(M)<\infty$, then by ~\eqref{3-1b},
\[
\gamma+\eps\geq -\frac{\log ({\rm vol}(M)-{\rm vol}(B_{x_o}(R)))}{R}+\frac{\log (c(\eps){\rm vol}(B_x(1)))}{R}.
\]
The second case again follows by letting $R\to\infty$ and  then $\eps\to 0$.
\end{proof}

We make the following remark.
\begin{remark}
In Sturm~\cite{sturm}, the concept of uniformly subexponential growth of manifold was introduced. This corresponds to $\gamma=0$. \end{remark}

One of the main technical results in this paper is the following

\begin{lem}\label{lem41} Let $M$ be a complete Riemannian manifold of dimension $n$, with Ricci curvature  and Weitzenb\"ock tensor on $k$-forms bounded below. We define the operator $H= \Delta-c $, where $c$ is a fixed constant  such that $\Delta-c$ is   a nonnegative operator   on $L^2(\Lambda^k(M))$.  For $\xi\in \mathbb{R}$ we denote by ${\vec g}_{m,\xi}(x,y)$ the kernel of the resolvent operator $(H+\xi^2)^{-m}$. Let  $m>n/4.$  Let $\sigma>0$, and define
\[
S(x,y) = \int_0^\infty e^{- \sigma t}|\cos(t\sqrt H)  {\vec g}_{m,\xi}(x,y)| dt.
\]
Denote  by $A_R$  the annulus $\{ x \mid R\leq d(x,y)<R+1\}$ for $ R\geq 0$.

Then  for any $\eps>0$, $y\in M$  and $\xi$ large enough
\begin{equation}\label{est-9}
\left(\int_{A_R} |S(x,y)|^2\, dx \right)^{1/2}\leq C\phi(y) \sigma^{-1}e^{-\left( \sigma-\eps \right)R}.
\end{equation}

Moreover, let $\gamma$ be the exponential rate of volume growth of $M$ and let $\sigma>\gamma/2$. Then
\[
\int_M|S(x,y)| dx\leq C  \sigma^{-1}(
2\sigma-\gamma)^{-1}.
\]
\end{lem}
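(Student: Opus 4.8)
The plan is to prove the two estimates in Lemma \ref{lem41} by combining the finite propagation speed of the wave kernel (Theorem \ref{thmFP}) with the weighted $L^2$-resolvent estimates from Corollary \ref{corl32}. The starting observation is that, since $\cos(t\sqrt H)$ has propagation speed at most $1$, the kernel of $\cos(t\sqrt H)\vec g_{m,\xi}$ at $(x,y)$ is an average of the values of $\vec g_{m,\xi}(\cdot,y)$ over the ball $B_x(t)$; more precisely, applying $\cos(t\sqrt H)$ in the $x$-variable only moves mass a distance $\leq t$. Hence for $x\in A_R$ and $t<R-1$, the operator $\cos(t\sqrt H)$ does not see the point $y$ at all in the sense that $|\cos(t\sqrt H)\vec g_{m,\xi}(x,y)|$ can be controlled by $\|\vec g_{m,\xi}(\cdot,y)\|$ restricted to the annulus of points within distance $t$ of $x$, i.e. at distance $\geq R-1-t$ from $y$.

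First I would set up the $L^2$ bound. Fix $y$ and write $u(x)=\vec g_{m,\xi}(x,y)$, so that by Corollary \ref{corl32}, $\int_M \phi(y)^{-2} e^{C_7\xi d(x,y)}|u(x)|^2\,dx\leq C_6$, and in particular $\int_{d(x,y)\geq \rho}|u(x)|^2\,dx \leq C_6\phi(y)^2 e^{-C_7\xi\rho}$. Using the $L^2$-boundedness of $\cos(t\sqrt H)$ on $L^2(\Lambda^k)$ (it is a bounded operator with norm $\leq 1$ on the Hilbert space $L^2$, since $H\geq0$) together with finite propagation speed, I would estimate $\big(\int_{A_R}|\cos(t\sqrt H)u|^2\big)^{1/2}$ by the $L^2$-mass of $u$ on the set $\{d(\cdot,y)\geq R-1-t\}$, which by the tail bound above is $\leq C\phi(y)e^{-c\xi(R-1-t)/2}$ for $t\leq R-1$ (and simply $\leq C\phi(y)$ for $t>R-1$). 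Then by Minkowski's integral inequality,
\[
\Big(\int_{A_R}|S(x,y)|^2\,dx\Big)^{1/2}\leq \int_0^\infty e^{-\sigma t}\Big(\int_{A_R}|\cos(t\sqrt H)u|^2\,dx\Big)^{1/2}\,dt,
\]
and splitting the $t$-integral at $t=R-1$, the first piece contributes $\lesssim \phi(y)e^{-cR}\int_0^\infty e^{-\sigma t}e^{ct}\,dt$ and the second piece contributes $\lesssim \phi(y)\int_{R-1}^\infty e^{-\sigma t}\,dt=\sigma^{-1}\phi(y)e^{-\sigma(R-1)}$. Since the constant $c\xi/2$ can be taken as large as we like by choosing $\xi$ large, the dominant term is $\sigma^{-1}\phi(y)e^{-\sigma(R-1)}$, giving the claimed $C\phi(y)\sigma^{-1}e^{-(\sigma-\eps)R}$ after absorbing the $e^{\sigma}$ into the constant (or directly, if one is slightly more careful, one gets the $\eps$-loss only from the comparison of $A_R$ geometry with the distance weight; either way the stated form follows).

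For the $L^1$ estimate I would sum the annular $L^2$-estimates against the volumes of the annuli. By Cauchy--Schwarz on each $A_R$,
\[
\int_{A_R}|S(x,y)|\,dx\leq \big(\mathrm{vol}(A_R)\big)^{1/2}\Big(\int_{A_R}|S(x,y)|^2\,dx\Big)^{1/2}\leq C\,\mathrm{vol}(B_y(R+1))^{1/2}\,\phi(y)\sigma^{-1}e^{-(\sigma-\eps)R}.
\]
Using $\phi(y)^2=\mathrm{vol}(B_y(1))^{-1}$ together with the definition \eqref{3-1a} of the exponential rate of volume growth $\gamma$, we have $\mathrm{vol}(B_y(R+1))\leq C(\eps)\mathrm{vol}(B_y(1))e^{(\gamma+\eps)(R+1)}$, so that $\mathrm{vol}(B_y(R+1))^{1/2}\phi(y)\leq C e^{(\gamma/2+\eps)R}$. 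Therefore $\int_{A_R}|S(x,y)|\,dx\leq C\sigma^{-1}e^{-(\sigma-\gamma/2-\eps)R}$, and summing over $R=0,1,2,\dots$ gives a geometric series that converges precisely when $\sigma>\gamma/2$ (choosing $\eps$ small enough), with sum of order $\sigma^{-1}(1-e^{-(\sigma-\gamma/2)})^{-1}\lesssim \sigma^{-1}(2\sigma-\gamma)^{-1}$ after the elementary bound $1-e^{-s}\geq s/2$ for $s$ in a bounded range (and $\geq$ a constant for $s$ large), which yields $\int_M|S(x,y)|\,dx\leq C\sigma^{-1}(2\sigma-\gamma)^{-1}$.

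The main obstacle I anticipate is making the finite-propagation-speed reduction rigorous at the level of kernels rather than operators: $S(x,y)$ involves the \emph{pointwise} kernel of $\cos(t\sqrt H)\vec g_{m,\xi}$, so I must justify that, for fixed $y$, the function $x\mapsto \cos(t\sqrt H)\vec g_{m,\xi}(x,y)$ is genuinely obtained by applying $\cos(t\sqrt H)$ to the $L^2$-function $u(\cdot)=\vec g_{m,\xi}(\cdot,y)$ (which requires $u\in L^2$, guaranteed by Corollary \ref{corl32} with $r^*=2$, i.e. $m>n/4$), and that Theorem \ref{thmFP} then forces the restriction of this function to $A_R$ to depend only on $u$ restricted to $\{d(\cdot,y)\geq R-1-t\}$. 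This is where the support condition $G(t,x,z)=0$ for $d(x,z)\geq t$ is used, via the adjoint/symmetry of the wave kernel; once that localization is in hand, the rest is the routine interplay of Minkowski's inequality, the exponential tail from Corollary \ref{corl32}, and the volume-growth bound \eqref{3-1a}.
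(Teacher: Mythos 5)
Your proof is correct and follows essentially the same strategy as the paper: both combine the finite propagation speed of $\cos(t\sqrt H)$ with the weighted $L^2$ resolvent bounds of Corollary \ref{corl32} via a support decomposition of $\vec g_{m,\xi}(\cdot,y)$, then use Minkowski's inequality for the $L^2(A_R)$ estimate and Cauchy--Schwarz together with the exponential volume growth bound for the $L^1$ estimate. The only differences are cosmetic: you cut off at the $t$-dependent radius $R-1-t$ whereas the paper uses a single radius $R'=\eps' R$ and balances the two resulting error terms, and the first piece of your $t$-integral should of course run only up to $R-1$ (as literally written, $\int_0^\infty e^{(c-\sigma)t}\,dt$ diverges for large $c$), which is exactly what your splitting already provides.
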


\begin{proof}
Let  $R'\geq 0$ and denote by  $\rho_{R'}(x)$ the characteristic function of the ball   $B_y(R')$, that is, the value of  $\rho_{R'}(x)$ is 1 on the ball $B_y(R')$  and $0$ otherwise.  We write
\[
{\vec g}_{m,\xi}(x,y)=g_{\xi}^1(x,y)+g_{\xi}^2(x,y)=\rho_{R'}(x)\,  {\vec g}_{m,\xi}(x,y)+(1-\rho_{R'}(x))\, {\vec g}_{m,\xi}(x,y).
\]
Then
\[
|S(x,y)|\leq |S^1(x,y)|+|S^2(x,y)|,
\]
where
\[
S^i(x,y)=\int_0^\infty e^{-\sigma t}|\cos(t\sqrt H) g_{\xi}^i(x,y)| dt
\]
for $i=1,2$.

 We  use the Minkowski inequality to get
\[
\begin{split}
\left( \int_{ A_R} |S^1(x,y)|^2\, dx \right)^{1/2} &
 =    \,
\left( \int_{ A_R} \left(\int_0^\infty e^{-\sigma t}|\cos(t\sqrt H) g^1_{\xi}(x,y)| dt\right)^2 \right)^{1/2} \\
& \leq    \,  \int_{0}^\infty e^{-\sigma t} \left( \int_{A_R}|\cos (t\sqrt{H})g^1_\xi(x,y)|^2 dx \right)^{1/2} dt.
\end{split}
\]
Since the support of $g^1_\xi(x,y)$ is within   $B_y(R')$, by the finite propagation speed of the wave kernel (Theorem~\ref{thmFP}) and the boundedness of the operator $\cos t\sqrt H$  on $L^2$  we obtain
\[
\begin{split}
\int_{0}^\infty & e^{-\sigma t} \left( \int_{A_R} |\cos (t\sqrt{H})g^1_\xi(x,y)|^2 dx \right)^{1/2} dt\\&
= \int_{t>R-R'} e^{-\sigma t} \left( \int_{A_R}  |\cos (t\sqrt{H})g^1_\xi(x,y)|^2  dx \right)^{1/2} dt\\
&= \int_{t>R-R'} e^{-\sigma t}\left( \int_{M}  |\cos (t\sqrt{H})g^1_\xi(x,y)|^2  dx \right)^{1/2} dt\\
&\leq \int_{t>R-R'} e^{-\sigma t}\left( \int_M  |g^1_\xi(x,y)|^2  dx \right)^{1/2} dt.
\end{split}
\]
By estimate \eqref{upper-e} of Corollary \ref{corl32} we have
  \[
 \int_{M}  |g^1_\xi(x,y)|^2  dx \leq  \int_{M}  |\vec g_{m,\xi}(x,y)|^2  dx \leq C_6 \, \phi^2(y)
  \]
 for all $\xi$ large enough. Substituting this in the above estimate gives
\begin{equation}\label{5_e1}
\left( \int_{ A_R} |S^1(x,y)|^2\, dx \right)^{1/2} \leq C\, \phi(y)\sigma ^{-1}e^{-\sigma(R-R')}.
\end{equation}

On the other hand, using the Minkowski inequality once again, we get
\[
 \|S^2(x,y)\|_{L^2}\leq C\sigma^{-1} \, \|g^2_\xi(x,y)\|_{L^2}.
\]
From Corollary \ref{corl32}, estimate~\eqref{upper-e2},  we have
\[
  \int_{M}  |g^2_\xi(x,y)|^2  dx\leq C\phi^2(y) e^{-  C_7 \xi R'}.
 \]

Combining the  above two estimates,  we obtain
\[
\left( \int_{A_R} |S(x,y)|^2\, dx \right)^{1/2}  \leq C \phi(y) \sigma^{-1}(e^{-\sigma(R-R')}+e^{-\frac 12  C_7\xi R'}).
\]

 Fix $\eps>0$ small and  choose $R'=\eps'R$ fixed such that $\sigma\eps'<\eps$. Then whenever $\xi$ is large enough, ~\eqref{est-9} holds.
 This proves the first part of the lemma.

For the second part, we let $\sigma>\gamma/2$ and assume that $\sigma=\gamma/2+2\eps$. By the definition of $\gamma$, we have ${\rm vol}(A_R)\leq C (\eps)\phi(y)^{-2}e^{(\gamma+\eps)R}$.
By the Cauchy-Schwarz inequality
\[
\int_{A_R} |S(x,y)|\, dx \leq \sqrt{\int_{A_R} |S(x,y)|^2\, dx}\cdot \sqrt{{\rm vol}(A_R)}< C  \sigma^{-1}\, e^{-\frac\eps 2R}.
\]
It follows that
\[
\begin{split}
 \int_M|S(x,y)| \, dx    \leq C   \sigma^{-1} \int_0^{\infty} e^{-\frac\eps 2t}dt\leq C  (\sigma \eps)^{-1}.
 \end{split}
\]
This completes the proof.
\end{proof}

 We are now ready to prove  the  main result of this  paper, Theorem \ref{thm3}.
\begin{proof}[Proof of Theorem \ref{thm3}]
 We will first prove the theorem for $p=1$ and then proceed by interpolation for the remaining $p$.

 Let $\xi>0$ be a real number to be chosen big enough later.  For any positive integer $m>n/4$, we have the resolvent identity
\begin{equation} \label{5_e2}
(H-z^2)^{-1}=K
+(\xi^2+z^2)^{-m}(H-z^2)^{-1}(H+\xi^2)^{-m},
\end{equation}
where
\[
K=(H+\xi^2)^{-1}+\cdots+(\xi^2+z^2)^{m-1}(H+\xi^2)^{-m}.
\]

From now on we assume $z=\alpha+i\beta$ with $\beta=\gamma/2+\eps_o$ for some $\eps_o>0$. By  Corollary \ref{lem31},  if $\xi\gg 0$,   the operator
$K$ is bounded on $L^1(\Lambda^k(M))$  for any $m \geq 1$.

Let $S^z(x,y)$ be the kernel of the operator $(H-z^2)^{-1}(H+\xi^2)^{-m}$ and ${\vec g}_{m,\xi}$ as in Lemma \ref{lem41}.   Then
\[
S^z(x,y)=(H-z^2)^{-1} {\vec g}_{m,\xi}(x,y)= (-iz)^{-1}\int_0^\infty e^{izt}\cos(t\sqrt H)  {\vec g}_{m,\xi}(x,y) dt
\]
(see for example \cite{DST}*{Eq (2.3)}), and as a result,
\[
 |S^z(x,y) | \leq  C  \int_0^\infty e^{-(\frac\gamma 2+\eps_o)t}|\cos(t\sqrt H)  {\vec g}_{m,\xi} (x,y)| dt.
\]
By Lemma~\ref{lem41}  with $\sigma =\frac\gamma 2+\eps_o$, we conclude that
\[
\sup_{y\in M}\int_M|S^z(x,y)|dx\leq C<\infty.
\]
Thus the operator $(H-z^2)^{-1}(H+\xi^2)^{-m}$ is bounded on $L^1(\Lambda^k(M))$  whenever ${\rm Im}(z)>\gamma/2$.  Since both operators on the right side of \eqref{5_e2} are bounded on $L^1(\Lambda^k(M))$  we have that  $(H-z^2)^{-1}$ is also bounded on $L^1(\Lambda^k(M))$ whenever ${\rm Im}(z)>\gamma/2$. By replacing $z$ with $-z$ we get the theorem  in the case $p=1$ for $|{\rm Im}(z)|>\gamma/2$.

Since $(H-z^2)^{-1}=(H-\alpha^2+\beta^2-2\alpha\beta i)^{-1}$, whenever $\alpha\neq 0$  the operator has a nonzero imaginary component and is therefore bounded on $L^2$. If on the other hand $\alpha=0$, then $(H-z^2)^{-1}=(H+\beta^2)^{-1}$ which is bounded on $L^2$ if $\beta\neq 0$.    Hence, $(H-z^2)^{-1}$ is a bounded operator on $L^2$ whenever $|{\rm Im}\, z|>0$. Alternatively, replacing $z$ with $iz$, we have that $(H+z^2)^{-1}$ is a bounded operator on $L^2$ whenever $|{\rm Re}\, z| >0$  and that $(H+z^2)^{-1}$ is bounded on $L^1(\Lambda^k(M))$ whenever $|{\rm Re} \,z |>\gamma/2.$

We fix $a >0$ and $b \in \mathbb{R}$ and define the operator
\[
T(x+iy) =  \left(H + \tfrac{\gamma^2}{4}(x +a + iy+ib )^2\,\right)^{-1}.
\]
From the above, it is clear that $T(iy)$  is a bounded operator on $L^2$ and that $T(1+iy)$  is a bounded operator on $L^1$.

We fix $p$ such that $1<p <2$ and let $\lambda\in(0,1)$ be the unique constant with the property $\frac 1p = \lambda + \frac 12 (1-\lambda)$. By the Stein Interpolation Theorem of Lemma \ref{SInt} we get that
\[
T(\lambda) =T( \tfrac 2p -1)=  \left(H + \tfrac{\gamma^2}{4} (\tfrac 2p -1+a +i b )^2\,\right)^{-1}
\]
is bounded on $L^p(\Lambda^k(M))$.    In other words, $(H+z^2)^{-1}$ is bounded in $L^p$ whenever
${\rm Re} \,z = \gamma(\frac 1p -\frac 12+a)$. Since this is true for any $a>0$ and $b\in \mathbb{R}$, we have that $(H+z^2)^{-1}$ is bounded in $L^p(\Lambda^k(M))$ whenever
${\rm Re} \,z >\gamma(\frac 1p -\frac 12)$. By replacing $z$ with $-z$ and $z$ with $-iz$ as before, we conclude that $(H-z^2)^{-1}$ is bounded in $L^p(\Lambda^k(M))$   for any $|{\rm Im} \,z|>(1/p-1/2)\gamma$.

For $p>2$, note that $\left|\frac 1p-\frac 12\right|= \left|\frac{1}{p^*}-\frac 12\right|$ whenever $1-1/p =1/p^*$. Hence the result also holds by the duality of the $L^p$ spaces. This concludes the proof of the theorem.
\end{proof}

Using the above method we will generalize our result to functions of the Laplacian. More general functions were considered by Taylor \cite{tay}, but again, we remove the assumption of bounded geometry for the manifold. We make the following assumptions.
\begin{enumerate}
\item  Let $\gamma_o>0$ be a positive number.  Let $W$ be a horizontal strip in the complex plane $\C$ defined by
\[
W=\{w\mid |{\rm Im}(w)|<\gamma_o/2+\eps_o\}
\]
for some $\eps_o>0$. Let  $f(w)$ be an even  holomorphic function on $W$ satisfying
\[
|f^{(j)}(w)|\leq\frac{C_j}{(1+|w|)^{j}}
\]
for all $0\leq j\leq  n/2+2$ and $w\in W$, where $f^{(j)}$ denotes the derivative of order $j$ of $f$.
\item
Let $g(w)=f(\sqrt w)$ and $c\gg 0$.  Assume that  the inverse Laplace transform $\tilde g(t)$ of $g(s)$ exists for $s\geq c$, that is, for any  real number $s\geq c$, we have
\[
 g(s)=\int_0^\infty e^{-s t} \tilde g(t) dt.
\]
We further assume that  $\tilde g$ is of at most exponential growth,
\[
|\tilde g(t)|\leq c_1 e^{c_2 t} \ \ \text{for \ \ all} \ \   t\in [0,\infty)
\]
for constants $c_1,c_2>0$.

\end{enumerate}
\vspace{.1in}

{\bf Example.} Let $z$ be a fixed complex number such that $|{\rm Im}(z)|>\gamma_o/2+2\eps_o$. Let
\[
f(w)=\frac{1}{w^2-z^2}, \qquad g(w)=\frac{1}{w-z^2}.
\]
Then $f,g$ satisfy the above assumptions. In particular, the inverse Laplace transform of $g$ exists for $w>|z^2|$.

 \begin{thm}\label{thm41-2}
Let $M$ be a manifold with exponential rate of volume growth $\gamma\leq\gamma_o$ and such that its Ricci curvature is   bounded below by $-K_1$  and the Weitzenb\"ock tensor on $k$-forms  is   bounded below by $-K_2$.  We consider functions $f,g$ which satisfy the above assumptions (1), (2), and let $L=\sqrt{\Delta-c}$,  where $c$ is a  constant such that $\Delta-c$ is   a nonnegative operator   on $L^2(\Lambda^k(M))$.
Then $f(L)$ is a bounded operator on $L^1(\Lambda^k(M))$.
   \end{thm}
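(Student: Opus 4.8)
The plan is to run the argument of the proof of Theorem~\ref{thm3}, with the resolvent $(H-z^2)^{-1}$ replaced by $f(L)=g(H)$ and the explicit amplitude $(-iz)^{-1}e^{izt}$ replaced by the cosine transform of $f$. Set $H=\Delta-c=L^2$, which is a nonnegative self-adjoint operator on $L^2(\Lambda^k(M))$, so that $f(L)=g(H)$ is defined by the spectral theorem and is bounded on $L^2(\Lambda^k(M))$ (since $f$ is bounded by hypothesis). By Lemma~\ref{lem33} with $r^*=1$ (Schur's test) it is enough to show that the integral kernel $\vec k_f(x,y)$ of $f(L)$ satisfies $\sup_{y\in M}\int_M|\vec k_f(x,y)|\,dx<\infty$.

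First I would fix an integer $m$ large enough (in particular $m>n/4$) and a real number $\xi$ to be taken large, and iterate the resolvent identity as in the derivation of \eqref{5_e2} to write $f(L)=g(H)=K_{m,\xi}+R_{m,\xi}$, where $K_{m,\xi}$ is a polynomial in $(H+\xi^2)^{-1}$ with no constant term and $R_{m,\xi}=h_m(H)(H+\xi^2)^{-m}$ collects the remainder. For $\xi$ large, $K_{m,\xi}$ is bounded on $L^1(\Lambda^k(M))$ by Corollary~\ref{lem31}, so everything reduces to the $L^1$-boundedness of $R_{m,\xi}$. To handle $R_{m,\xi}$ I would pass to the wave operator: since $f$ is even, $g(s)=\int_0^\infty e^{-st}\tilde g(t)\,dt$, and $e^{-\lambda^2 t}=(\pi t)^{-1/2}\int_0^\infty e^{-s^2/(4t)}\cos(\lambda s)\,ds$, so that formally $f(L)=\int_0^\infty a(s)\cos(s\sqrt H)\,ds$ with an amplitude $a$ built from $\tilde g$. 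These integrals diverge as written because $|\tilde g(t)|\le c_1 e^{c_2 t}$, but after composing with $(H+\xi^2)^{-m}$ and peeling off $K_{m,\xi}$ they become legitimate, giving
\[
R_{m,\xi}=\int_0^\infty a(s)\,\cos(s\sqrt H)\,(H+\xi^2)^{-m}\,ds ,
\]
with $a$ equal, up to a constant, to the cosine transform of $\lambda\mapsto h_m(\lambda^2)$. Holomorphy of $f$ on the strip $W=\{\,|\textup{Im}\,w|<\gamma_o/2+\eps_o\,\}$ forces the tail bound $|a(s)|\le C e^{-\sigma s}$ for $s\ge 1$ with $\sigma$ chosen so that $\gamma/2<\sigma<\gamma_o/2+\eps_o$ (here the hypothesis $\gamma\le\gamma_o$ enters), while the symbol bounds $|f^{(j)}(w)|\le C_j(1+|w|)^{-j}$ for $0\le j\le n/2+2$, together with the choice of $m$ large and, if necessary, integration by parts in $s$, keep the short-time range $0\le s\le 1$ integrable.

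It then remains to estimate the kernel of $R_{m,\xi}$. By the finite propagation speed of the wave operator (Theorem~\ref{thmFP}), the kernel of $\cos(s\sqrt H)(H+\xi^2)^{-m}$ at $(x,y)$ vanishes for $s<d(x,y)$, and by Corollary~\ref{corl32} its $L^2(dx)$-norm is at most $C\phi(y)$ uniformly in $s$; hence the kernel of $R_{m,\xi}$ is dominated, up to a constant, by $\int_0^\infty|a(s)|\,\bigl|\cos(s\sqrt H)\vec g_{m,\xi}(\cdot,y)\bigr|(x)\,ds$. For $s\ge 1$ this is precisely the quantity controlled by the second part of Lemma~\ref{lem41} (with $\sigma>\gamma/2$), which yields a bound uniformly integrable in $x$; for $0\le s\le 1$, the Cauchy--Schwarz inequality together with $\textup{vol}(B_y(s))\le\textup{vol}(B_y(1))=\phi(y)^{-2}$ and the local integrability of $a$ give a uniformly integrable bound as well. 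Adding the $L^1$-bound on $K_{m,\xi}$ completes the proof; no interpolation is needed here, since only $L^1$-boundedness is claimed.

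\textbf{The main obstacle} I expect to be the legitimization of the wave representation in the second paragraph, and in particular the control of the amplitude $a(s)$ as $s\to 0$. Analyticity of $f$ on the whole strip $W$ only governs the behaviour of $a$ at infinity (and is exactly where the hypothesis $\gamma\le\gamma_o$ is consumed); the short-time behaviour, which is nothing but the behaviour of $f(L)$ near the diagonal, has to be tamed using the symbol estimates on $f$ up to order $n/2+2$ --- the threshold $n/2$ being the usual dimensional/Sobolev barrier --- together with the exponential control on $\tilde g$ and the freedom to take $m$ large. Once this is done, finite propagation speed, Lemma~\ref{lem41} and Corollary~\ref{lem31} close the argument precisely as in the proof of Theorem~\ref{thm3}.
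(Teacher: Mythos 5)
Your overall architecture is the right one and matches the paper's: split $f(L)=g(H)$ into a ``main part'' that is $L^1$-bounded by heat-kernel estimates plus a remainder of the form $(\text{symbol of }H)\cdot(H+\xi^2)^{-m}$, represent the symbol by its cosine transform, get exponential decay of that transform from holomorphy of $f$ in the strip (this is exactly Lemma~\ref{blem41}), and close with finite propagation speed and the second part of Lemma~\ref{lem41}. The genuine gap is in the decomposition itself. A resolvent-identity-style splitting $g(H)=K_{m,\xi}+h_m(H)(H+\xi^2)^{-m}$ with $K_{m,\xi}$ a polynomial in $(H+\xi^2)^{-1}$ forces $h_m(\mu)=\bigl(g(\mu)-K_{m,\xi}(\mu)\bigr)(\mu+\xi^2)^m$, and since the hypotheses only give $|f(w)|\leq C_0$ (so $g$ is bounded but need not decay, nor admit an asymptotic expansion in powers of $\mu^{-1}$ at infinity), $h_m(\lambda^2)$ grows like $\lambda^{2m}$ and is nowhere near integrable on horizontal lines of the strip; the Paley--Wiener step then has nothing to act on. The same problem sinks the subordination formula in your second paragraph: $a(s)=\int_0^\infty(\pi t)^{-1/2}e^{-s^2/(4t)}\tilde g(t)\,dt$ diverges because $\tilde g$ grows exponentially, and composing with $(H+\xi^2)^{-m}$ on the operator side does not repair the divergence of the scalar amplitude.

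The missing device is Taylor's formula with integral remainder (Lemma~\ref{blem42}): the paper writes $g(H)=A_N+B_N$ with $A_N=\sum_{j<N}(-1)^j\frac{\alpha^j}{j!}g^{(j)}(H+\alpha)$ and $B_N=\frac{(-1)^N\alpha^N}{(N-1)!}\int_0^1 g^{(N)}(H+t\alpha)t^{N-1}dt$. The point is that the remainder involves $g^{(N)}$, and $|g^{(N)}(w^2)|\leq C(1+|w|)^{-N}$ by Lemma~\ref{blem40}, so the symbol $\varphi_s(t)=g^{(N)}(t^2)(t^2+(1-s)\alpha)^m$ really is integrable on the strip once $N$ is large relative to $2m$ (with $m>n/4$, which is where the order $n/2+2$ of derivatives in assumption (1) gets consumed); from there your cosine-transform/finite-propagation/Lemma~\ref{lem41} endgame goes through verbatim. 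Correspondingly, the main part $A_N$ is \emph{not} a sum of resolvent powers, so Corollary~\ref{lem31} does not apply to it; its $L^1$-boundedness is exactly where assumption (2) is used, via $\sum_j(-1)^j\frac{\alpha^j}{j!}g^{(j)}(s+\alpha)=\int_0^\infty e^{-st}\sum_j\frac{(\alpha t)^j}{j!}e^{-\alpha t}\tilde g(t)\,dt\leq C\int_0^\infty e^{-\alpha t/2}e^{-st}dt$ for $\alpha\gg0$, followed by the domination $|\vec h_t|\leq e^{K_2t}h(t,x,y)$ and $\int_M h(t,x,y)\,dx\leq1$. Without the Taylor expansion, neither half of your decomposition can be controlled under the stated hypotheses.
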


\vspace{.2in}
For any positive integer $N$, define
  \begin{align}
  &A_N=\sum_{j=0}^{N-1} (-1)^j\frac{\alpha^j}{j!} g^{(j)}(\Delta -c+\alpha);\label{dN-1}\\
  & B_N=\frac{(-1)^N\alpha^{N}}{(N-1)!}\int_0^1 g^{(N)}(\Delta  -c+t\alpha)t^{N-1} dt\label{dN-2}
    \end{align}
for  $\alpha>0$  as in~\eqref{46}. Then $f(L)=g(\Delta-c)=A_N+B_N$ from Appendix ~\ref{appB}. We begin by proving the following three lemmas.

 \begin{lem} Suppose that the Weitzenb\"ock tensor on $k$-forms is bounded below by $-K_2$ over $M$. Then for $\alpha\gg 0$, $A_N$ is a bounded operator on $L^1(\Lambda^k(M))$.
 \end{lem}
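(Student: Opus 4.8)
The plan is to reduce the boundedness of $A_N$ on $L^1(\Lambda^k(M))$ to the resolvent estimates already established, most notably Corollary~\ref{lem31} together with the key annular decay estimate in Lemma~\ref{lem41}. By the definition \eqref{dN-1}, $A_N$ is a finite sum of operators of the form $g^{(j)}(\Delta-c+\alpha)$ for $0\le j\le N-1$, so it suffices to show that each such operator is bounded on $L^1$ for $\alpha\gg 0$. Since $g(w)=f(\sqrt w)$ and $f$ satisfies the decay assumption $|f^{(j)}(w)|\le C_j(1+|w|)^{-j}$ on the strip $W$, a routine application of the chain rule and Cauchy estimates shows that $|g^{(j)}(w)|\le C_j'(1+|w|)^{-j}$ for $w$ in a half-plane $\{\,{\rm Re}(w)>c\,\}$, and in particular $g^{(j)}$ decays at least like $|w|^{-1}$ for $j\ge 1$; for $j=0$ one simply has a bounded holomorphic function.

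The first step I would carry out is to express $g^{(j)}(\Delta-c+\alpha)$ as an operator built from the resolvent. Writing $H=\Delta-c$ (nonnegative on $L^2$ by hypothesis), I would use the inverse Laplace transform of assumption (2): since $g(s)=\int_0^\infty e^{-st}\tilde g(t)\,dt$ with $|\tilde g(t)|\le c_1 e^{c_2 t}$, differentiating $j$ times in $s$ gives $g^{(j)}(s)=\int_0^\infty (-t)^j e^{-st}\tilde g(t)\,dt$, and hence, at the operator level,
\[
g^{(j)}(H+\alpha)=\int_0^\infty (-t)^j e^{-\alpha t}\,\tilde g(t)\,e^{-tH}\,dt,
\]
valid once $\alpha$ is large enough that $\alpha>c_2$ and the semigroup bound on forms (from $\mathcal W_k\ge -K_2$, i.e. \eqref{kheat2}) controls the integral. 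The next step is to pass to kernels: the kernel of $g^{(j)}(H+\alpha)$ is $\int_0^\infty (-t)^j e^{-\alpha t}\tilde g(t)\,\vec h_t^H(x,y)\,dt$, where $\vec h_t^H=e^{ct}\vec h_t$ is the heat kernel of $H$, and then I would invoke the Gaussian upper bound of Lemma~\ref{lemheat} together with the volume-comparison reduction of Corollary~\ref{cor211} to bound this kernel by $C\int_0^\infty t^j e^{-(\alpha-c-c_2-C_3)t}\,{\rm vol}(B_y(\sqrt t))^{-1}e^{-d^2(x,y)/(C_2 t)}\,dt$. Choosing $\alpha$ large enough that the exponent $\alpha-c-c_2-C_3>0$, the argument of Corollary~\ref{lem31} applies essentially verbatim: using $\int_M h(t,x,y)\,dx\le 1$ (\cite{GrHeat}*{Theorem 7.16}) together with the domination $|\vec h_t^H(x,y)|\le e^{(c+K_2)t}h(t,x,y)$, one gets $\int_M|g^{(j)}(H+\alpha)(x,y)|\,dx\le C\int_0^\infty t^j e^{-(\alpha-c-K_2-c_2)t}\,dt<\infty$ uniformly in $y$, and the same with the roles of $x$ and $y$ reversed. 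Schur's test (Lemma~\ref{lem33} with $r^*=1$, so $p=q=1$) then yields $L^1$-boundedness of each $g^{(j)}(H+\alpha)$, and summing over $0\le j\le N-1$ with the fixed powers $\alpha^j/j!$ gives boundedness of $A_N$.

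The main obstacle I anticipate is a bookkeeping one rather than a conceptual one: one must choose $\alpha$ large enough to simultaneously dominate the exponential growth rate $c_2$ of $\tilde g$, the curvature constants $c$ and $K_2$ appearing in the semigroup domination, and the constant $C_3$ from the volume-growth absorption in Corollary~\ref{cor211}, all while keeping the polynomial factor $t^j$ integrable against the resulting exponential decay — this is exactly why the statement reads ``for $\alpha\gg 0$''. A secondary technical point is justifying the interchange of the $\int_0^\infty dt$ integral with the spatial integral $\int_M dx$ (Fubini/Tonelli), which is legitimate because all the integrands are nonnegative after taking absolute values and the resulting double integral is finite by the estimate above. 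I do not expect the decay hypothesis $|f^{(j)}|\le C_j(1+|w|)^{-j}$ to be needed here at all for $A_N$ — that hypothesis is what will be needed later to control the remainder term $B_N$ — so the proof of this particular lemma should go through cleanly once the value of $\alpha$ is pinned down.
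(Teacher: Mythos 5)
Your proposal is correct and follows essentially the same route as the paper: represent the operator via the inverse Laplace transform of $g$, pass to the heat semigroup, dominate the form heat kernel by $e^{K_2t}h(t,x,y)$ using \eqref{kheat2}, and conclude from $\int_M h(t,x,y)\,dx\le 1$ for $\alpha\gg 0$. The only (immaterial) difference is that you bound each term $g^{(j)}(\Delta-c+\alpha)$ separately, producing factors $t^j\alpha^j/j!$, whereas the paper sums the series first so that $\sum_{j=0}^{N-1}\frac{(\alpha t)^j}{j!}e^{-\alpha t}$ is controlled at once; the appeals to Corollary~\ref{cor211} and Lemma~\ref{lem41} in your outline are not actually needed.
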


 \begin{proof}
 Let $\tilde g(t)$ be the inverse Laplace  transform of $g(s)$. By our assumption,
 \[
 g(s)= \int_0^\infty e^{-st} \tilde  g(t)  \,dt
 \]
 for any real number   $s\gg 0$.
Thus
 \[
 \sum_{j=0}^{N-1}(-1)^j\frac{\alpha^j}{j!}\, g^{(j)}(s+\alpha)= \int_0^\infty e^{-s t}\sum_{j=0}^{N-1} \frac{(\alpha t)^j}{j!} e^{-\alpha t}\, \tilde g(t)  dt.
 \]
 Since $\tilde g(t)$ is of exponential growth at most, there is a constant $C>0$ such that
 \[
 \left| \sum_{j=0}^{N-1}(-1)^j\frac{\alpha^j}{j!}\, g^{(j)}(s+\alpha)\right|\leq C\int_0^\infty e^{-\frac 12\alpha t} e^{-st} dt
 \]
 for $\alpha\gg 0$.
 Let $A_N(x,y)$ be the kernel of $A_N$. Then, by~\eqref{kheat2}  and functional calculus we have
 \[
 |A_N(x,y)|\leq C \int_0^\infty e^{-(\frac 12\alpha-K_2) t} \, h(t,x,y) dt,
 \]
where $h(t,x,y)$ is the heat kernel on functions.  As in the proof of Corollary \ref{lem31}, since $\int_M h(t,x,y) dx\leq 1$, we  have
\[
\int_M|A_N(x,y)| dx\leq C<\infty
\]
and hence $A_N$ is $L^1$ bounded whenever $\alpha$ is large enough.
\end{proof}

 \begin{lem}
 For any $m<N$, the operator $R=R_N=(\Delta   -c  +\alpha)^mB_N$ is bounded on $L^2(\Lambda^k(M))$.
 \end{lem}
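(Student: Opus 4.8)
The plan is to reduce the claim to the spectral theorem for the nonnegative self-adjoint operator $\Delta-c$ on $L^2(\Lambda^k(M))$. Using the integral remainder form of Taylor's formula recalled in Appendix~\ref{appB}, I would rewrite
\[
R_N=(\Delta-c+\alpha)^m B_N=\frac{(-1)^N\alpha^N}{(N-1)!}\int_0^1 t^{N-1}\,(\Delta-c+\alpha)^m\,g^{(N)}(\Delta-c+t\alpha)\,dt .
\]
For each fixed $t\in[0,1]$ both $(\Delta-c+\alpha)^m$ and $g^{(N)}(\Delta-c+t\alpha)$ are Borel functions of the single operator $\Delta-c$, whose spectrum lies in $[0,\infty)$; hence the functional calculus gives
\[
\bigl\|(\Delta-c+\alpha)^m\,g^{(N)}(\Delta-c+t\alpha)\bigr\|_{L^2\to L^2}\le\sup_{\lambda\ge0}(\lambda+\alpha)^m\,\bigl|g^{(N)}(\lambda+t\alpha)\bigr| .
\]
Thus, by the Minkowski integral inequality, it suffices to bound this supremum uniformly in $t\in[0,1]$, after which $\|R_N\|_{L^2\to L^2}<\infty$.

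The heart of the matter is the pointwise estimate $|g^{(N)}(\lambda)|\le C_N(1+\lambda)^{-N}$ for $\lambda\ge0$. Since $g(w)=f(\sqrt w)$ with $f$ even and holomorphic on the strip $W\supset\mathbb R$, writing $u=\sqrt w$ and using $\tfrac{d}{dw}=\tfrac1{2u}\tfrac{d}{du}$, one proves by induction that
\[
g^{(j)}(w)=\sum_{\ell=1}^{j}\frac{a^{(j)}_\ell}{u^{\,2j-\ell}}\,f^{(\ell)}(u)
\]
for suitable constants $a^{(j)}_\ell$ (the inductive step being $g^{(j+1)}(w)=\tfrac1{2u}\partial_u g^{(j)}(w)$). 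For $\lambda\ge1$, hypothesis~(1) applied to $f^{(\ell)}$ with $\ell\le N$ gives $|f^{(\ell)}(\sqrt\lambda)|\le C_\ell\lambda^{-\ell/2}$, so every term is $O\bigl(\lambda^{-(2j-\ell)/2}\lambda^{-\ell/2}\bigr)=O(\lambda^{-j})$. For $0\le\lambda\le1$ I would instead use that an even holomorphic function of $u$ is a holomorphic function of $w=u^2$, so $g$ is holomorphic in a neighbourhood of $0$ and $g^{(j)}$ is continuous, hence bounded, on $[0,1]$; this avoids the spurious singularities of the individual terms above. Taking $j=N$ gives the desired bound.

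Putting this together, for $t\in[0,1]$ and $\lambda\ge0$ one has $|g^{(N)}(\lambda+t\alpha)|\le C_N(1+\lambda+t\alpha)^{-N}\le C_N(1+\lambda)^{-N}$, so
\[
(\lambda+\alpha)^m\,|g^{(N)}(\lambda+t\alpha)|\le C_N\,(\lambda+\alpha)^m(1+\lambda)^{-N},
\]
and the right-hand side is bounded on $[0,\infty)$ precisely because $m<N$; this is the uniform bound required in the first step, and the lemma follows. The only delicate point is the pointwise decay of $g^{(N)}$: in the large-$\lambda$ regime the several derivative bounds on $f$ from hypothesis~(1) must be combined through the chain rule so that the powers of $u=\sqrt\lambda$ cancel correctly (which in particular is why one needs $f^{(\ell)}$ controlled up to order $\ell=N$), whereas near $\lambda=0$ one must argue through holomorphy of $g=f(\sqrt{\cdot})$ rather than term by term. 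Once such a bound on the derivatives of $g=f(\sqrt{\cdot})$ is available, the statement is immediate from the spectral-theorem reduction.
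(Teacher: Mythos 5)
Your proof is correct and follows essentially the same route as the paper: rewrite $R_N$ via the integral remainder, reduce by Minkowski's inequality and the spectral theorem to a uniform bound on $(\lambda+\alpha)^m\,|g^{(N)}(\lambda+t\alpha)|$, and control $g^{(N)}$ through the chain-rule expansion of the derivatives of $f(\sqrt{\cdot})$ (the paper's Lemma~\ref{blem40}). Your explicit decay $|g^{(N)}(\lambda)|\le C_N(1+\lambda)^{-N}$ is in fact the sharper form that the computation inside that lemma actually yields (its statement records only $(1+\sqrt{\lambda}\,)^{-N}$), and it is the form genuinely needed to conclude for all $m<N$ rather than merely $m\le N/2$.
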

 \begin{proof}
 Let
 \[
 \tilde \Delta_t=\Delta-c +t\alpha.
 \]
 Then we can rewrite $R$ as
 \[
 \begin{split}
 &
 R=\frac{(-1)^N\alpha^{N}}{(N-1)!}\int_0^1(\Delta   -c  +\alpha)^m g^{(N)}(\Delta  -c+t\alpha)t^{N-1} dt\\
 &=
 \frac{(-1)^N\alpha^{N}}{(N-1)!}\int_0^1 (\tilde\Delta_t+(1-t)\alpha)^m g^{(N)}(\tilde\Delta_t)t^{N-1} dt.
 \end{split}
 \]
 Using Minkowski's inequality,  it suffices to show that for any $t\in[0,1]$, the operator
 $
(\tilde\Delta_t+(1-t)\alpha)^m g^{(N)}(\tilde\Delta_t)
 $
is uniformly bounded on $L^2$  with respect to $t$. This can easily be seen by the Spectral Theorem since the functions $(x+(1-t)\alpha)^m g^{(N)}(x)$ are  uniformly bounded by Lemma \ref{blem40} whenever $m<N$.

 \end{proof}

 \begin{lem}
Suppose that the Ricci curvature of $M$ is bounded below by $-K_1$, and the Weitzenb\"ock tensor on $k$-forms is bounded below by $-K_2$ over $M$. Assume that the  exponential rate of volume growth $\gamma$ of $M$ satisfies $\gamma\leq\gamma_o$.
Then $B=B_N$ is bounded on $L^1(\Lambda^k(M))$.
 \end{lem}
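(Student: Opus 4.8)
The plan is to estimate the integral kernel $B_N(x,y)$ of $B_N$ directly and then apply Schur's test: by Lemma~\ref{lem33} with $r^*=1$ it suffices to show that $\sup_y\int_M|B_N(x,y)|\,dx<\infty$ together with the symmetric bound in $x$, which will follow in exactly the same way. Write $H=\Delta-c$, $L=\sqrt H$, and recall $g(w)=f(\sqrt w)$, so that $g^{(N)}$ is holomorphic on $\{w:\sqrt w\in W\}$ (a parabolic region containing a neighbourhood of $[0,\infty)$, since $f$ is even), and, because $N\le n/2+2$, the bounds on the derivatives of $f$ give $|g^{(N)}(w)|\le C(1+|w|)^{-N}$ throughout $\{w:\sqrt w\in W\}$. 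Fix a positive integer $m$ with $n/4<m<N$ and a large $\xi>0$, and use the identity
\[
g^{(N)}(H+t\alpha)=\Phi_t(\sqrt H)\,(H+\xi^2)^{-m},\qquad \Phi_t(s):=(s^2+\xi^2)^m\,g^{(N)}(s^2+t\alpha),
\]
where $\Phi_t$ is an even function and $(H+\xi^2)^{-m}$ has kernel $\vec g_{m,\xi}$ as in Lemma~\ref{lem41}.

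The main step is to show that $\widehat{\Phi_t}(\tau):=\int_{\mathbb R}\Phi_t(s)e^{-is\tau}\,ds$ decays like $e^{-\sigma|\tau|}$ with $\sigma:=\gamma_o/2+\eps_o/2$, uniformly in $t\in[0,1]$. For this I would check that $\sqrt{s^2+t\alpha}\in W$ whenever $|\mathrm{Im}\,s|<\gamma_o/2+\eps_o$ and $t\ge 0$, so that $\Phi_t$ extends holomorphically to the strip $\{|\mathrm{Im}\,s|<\gamma_o/2+\eps_o\}$; the bound $|g^{(N)}(w)|\le C(1+|w|)^{-N}$ together with $m<N$ then makes $\Phi_t$ integrable on every horizontal line inside the strip, with constants uniform in $t$. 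Shifting the contour defining $\widehat{\Phi_t}$ down to $\mathrm{Im}\,s=-\sigma\,\mathrm{sgn}(\tau)$ yields $|\widehat{\Phi_t}(\tau)|\le C e^{-\sigma|\tau|}$, uniformly in $t\in[0,1]$. Fourier inversion of the even function $\Phi_t$ gives $\Phi_t(\sqrt H)=\frac1{2\pi}\int_{\mathbb R}\widehat{\Phi_t}(\tau)\cos(\tau\sqrt H)\,d\tau$, so that by \eqref{dN-2}
\[
B_N(x,y)=\frac{(-1)^N\alpha^N}{2\pi(N-1)!}\int_0^1 t^{N-1}\!\int_{\mathbb R}\widehat{\Phi_t}(\tau)\,\cos(\tau\sqrt H)\,\vec g_{m,\xi}(x,y)\,d\tau\,dt ,
\]
the operator $\cos(\tau\sqrt H)$ acting in the first variable.

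The estimate is then immediate from Lemma~\ref{lem41}. Using $|\widehat{\Phi_t}(\tau)|\le C e^{-\sigma|\tau|}$, the evenness of $\tau\mapsto\cos(\tau\sqrt H)$, and Fubini,
\[
\int_M|B_N(x,y)|\,dx\ \le\ C\,\alpha^N\int_0^1 t^{N-1}\Big(\int_0^\infty e^{-\sigma\tau}\!\int_M\big|\cos(\tau\sqrt H)\,\vec g_{m,\xi}(x,y)\big|\,dx\,d\tau\Big)dt .
\]
The parenthesised integral is precisely $\int_M S(x,y)\,dx$ in the notation of Lemma~\ref{lem41} with this $\sigma$, and since $\gamma\le\gamma_o<2\sigma$ while $\mathrm{Ric}\ge-K_1$ and $\mathcal{W}_k\ge-K_2$, that lemma bounds it by $C\,\sigma^{-1}(2\sigma-\gamma)^{-1}$, uniformly in $y$ once $\xi$ is large. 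As $\int_0^1 t^{N-1}\,dt=1/N$ and $\alpha$ is a fixed constant, this gives $\sup_y\int_M|B_N(x,y)|\,dx<\infty$; the bound with $x$ and $y$ exchanged follows identically, since $\vec g_{m,\xi}$ is a symmetric kernel and Lemma~\ref{lem41} is symmetric in $x\leftrightarrow y$. Hence $B_N$ is bounded on $L^1(\Lambda^k(M))$.

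The delicate point is the holomorphic extension and the uniform-in-$t$ exponential decay of $\widehat{\Phi_t}$: the half-width $\gamma_o/2+\eps_o$ of the strip on which $f$ lives is calibrated precisely so that the extracted decay rate $\sigma$ exceeds $\gamma_o/2\ge\gamma/2$, matching the threshold $\sigma>\gamma/2$ required by Lemma~\ref{lem41}, and one must verify that all bounds on $\Phi_t$, both near and far from the origin, are uniform for $t\in[0,1]$ (the endpoint $t=0$, where $\Phi_t(s)=(s^2+\xi^2)^m g^{(N)}(s^2)$, being representative). The remaining ingredients — finite propagation speed (Theorem~\ref{thmFP}) and the weighted $L^2$ resolvent-kernel bounds (Corollary~\ref{corl32}) — are already built into Lemma~\ref{lem41}, so no further work with them is needed.
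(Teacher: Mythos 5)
Your argument is essentially the paper's own proof: you factor $g^{(N)}(H+t\alpha)$ into an even function of $\sqrt{H}$ times a power of the resolvent, use holomorphy of $f$ on the strip to get exponential decay of the Fourier transform at a rate exceeding $\gamma/2$, represent the even function via $\cos(\tau\sqrt H)$ by Fourier inversion, and then invoke Lemma~\ref{lem41}. The only (harmless) differences are bookkeeping: you keep the shift $t\alpha$ inside the argument of $g^{(N)}$ and use an auxiliary large $\xi$ in the resolvent, whereas the paper shifts the operator to $\sqrt{\Delta-c+s\alpha}$ and applies Lemma~\ref{lem41} with $c$ replaced by $c-s\alpha$ and $\xi$ by $\sqrt{(1-s)\alpha}$.
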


\begin{proof}
 Let $m>n/4$. For any $s\in[0,1]$, we define
\[
\varphi_s(t)=g^{(N)}(t^2)\, (t^2+(1-s)\alpha)^m.
\]
By definition $\varphi_s(x)$ and its Fourier tranformation $\hat\varphi_s(\xi)$ satisfy the relations
\[
\begin{split}
&\hat\varphi_s(\xi)=\int_\R\varphi_s(t) e^{-i t\xi} dt,\\
&\varphi_s(t)=(2\pi)^{-1}\int_\R\hat\varphi_s(\xi) e^{it\xi} d\xi.
\end{split}
\]
Since $\varphi_s(x)$ is an even function, the above two equations are reduced to
\[
\begin{split}
&\hat\varphi_s(\xi)=2\int_0^\infty \varphi_s(t) \cos(\xi t) dt,\\
&\varphi_s(t)=\pi^{-1}\int_0^\infty\hat\varphi_s(\xi) \cos(t\xi)d\xi.
\end{split}
\]
Using the definition of $\varphi_s(x)$, we can rewrite $B_N$ as
\[
B_N=\frac{(-1)^N\alpha^{N}}{(N-1)!}\left(\int_0^1 \varphi_s(\sqrt{\Delta-c+s\alpha})s^{N-1} ds\right) (\Delta-c+\alpha)^{-m}.
\]

In order to prove that $B_N$ is bounded on $L^1(\Lambda^k(M))$, it suffices to show that for any $s\in[0,1]$, the operators
\[
\varphi_s(\sqrt{\Delta-c+s\alpha}) (\Delta-c+\alpha)^{-m}
\]
are uniformly bounded on $L^1(\Lambda^k(M))$.

Let $r(x,y)=r_s(x,y)$ be the kernels of the above operators. Then
\[
\begin{split}
r(x,y)&=\varphi_s(\sqrt{\Delta-c+s\alpha}) \vec g_{m,c,\alpha}(x,y)\\
&= \pi^{-1}\int_0^\infty\hat \varphi_s(\xi) \cos \left( \xi \sqrt{\Delta   -c +s\alpha} \right)\,\,\vec g_{m,c,\alpha}(x,y)\,d\xi,
\end{split}
\]
where $\vec g_{m,c,\alpha}(x,y)$ is the kernel of $(\Delta-c+\alpha)^{-m}$.

By the assumption on $f$, the function $\varphi_s(w) =g^{(N)}(w^2)\, (w^2+(1-s)\alpha)^m$  satisfies $|\varphi_s(w)| \leq C \, (1+|w|)^{ 2m-N }$ in the strip $|{\rm Im}\, w |<\gamma_o/2+3\eps_o/4$ by Lemma~\ref{blem40}.  Choosing $m$ such that $n/2 < 2m < N +1\leq n/2+2$ we get $|\varphi_s(w)| \leq C \, (1+|w|)^{1+\beta}$ for some $\beta>0$. Therefore, $\varphi_s(w)$ satisfies the $L^1$-integrablity criterion of Lemma ~\ref{blem41} over $x$ when $w=x+i\tau$ for $|\tau|\leq \eps_o/2$. By the same lemma
\[
|\hat\varphi_s(\xi)|\leq C e^{-(\gamma_o+\eps_o)\xi/2}.
\]
As a result,
\[
 |r(x,y) | \leq  C  \int_0^\infty e^{-(\frac{\gamma_o} 2+\frac{\eps_o}{2}) \xi}|\cos(\xi\sqrt{\Delta  -c  +s\alpha})\,\, \vec g_{m,c,\alpha}(x,y)| d\xi.
\]
If we replace $c$ by $c-s\alpha$, and $\xi$ by $\sqrt{ (1-s)\alpha}$ in Lemma~\ref{lem41},  then
\[
\int_M|r(x,y)| dx\leq C
\]
uniform for $y$ and $s\in[0,1]$. This completes the proof of the lemma.
\end{proof}

\begin{proof}[Proof of Theorem~\ref{thm41-2}] As we have seen,
\[
f(L)= g(\Delta -c)=A_N+B_N,
\]
with $A_N, B_N$ defined by \eqref{dN-1} and \eqref{dN-2}.  By the above three lemmas $f(L)$ is bounded in $L^1(\Lambda^k(M))$ for sufficiently large $\alpha>0$.

\end{proof}

\begin{corl}
We consider functions $f,g$ which satisfy the above assumptions (1), (2).
Let $M$ be a manifold with exponential rate of volume growth $\gamma$ satisfying
\[
\left|\frac 1p-\frac 12\right|\gamma \leq\frac{\gamma_o}{2},
\]
as well as Ricci curvature and Weitzenb\"ock tensor bounded below.  Let $L=\sqrt{\Delta-c}$,  where $c$ is a  constant such that $\Delta-c$ is   a nonnegative operator   on $L^2(\Lambda^k(M))$. Then $f(L)$ is a bounded operator on $L^p(\Lambda^k(M))$.
\end{corl}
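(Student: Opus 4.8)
The plan is to obtain the corollary from Theorem~\ref{thm41-2} and the spectral theorem by complex interpolation, in the spirit of the proof of Theorem~\ref{thm3}. First I would reduce to the range $1\le p\le 2$: the $L^p$--$L^{p^*}$ adjoint of $f(L)$ is $\bar f(L)$, where $\bar f(w)=\overline{f(\bar w)}$ is again an even holomorphic function on $W$ satisfying assumptions (1) and (2) with the same $\gamma_o$, and $|\tfrac1{p^*}-\tfrac12|=|\tfrac1p-\tfrac12|$; so the case $p^*\le2$ yields the case $p\ge2$ by duality. The endpoint $p=2$ needs nothing: $\Delta-c$ is nonnegative and self-adjoint on $L^2(\Lambda^k(M))$, hence so is $L$, its $L^2$-spectrum lies in $[0,\infty)$, and $f$ is bounded there by (1) with $j=0$, so $f(L)$ is $L^2$-bounded by the spectral theorem. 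When $\gamma\le\gamma_o$ there is also nothing more to do: Theorem~\ref{thm41-2} gives boundedness on $L^1$, and Riesz--Thorin interpolation with the $L^2$ bound (and then duality) covers every $p$, the hypothesis $|\tfrac1p-\tfrac12|\gamma\le\tfrac{\gamma_o}2$ holding automatically. The substance of the corollary is therefore the regime $1<p<2$ with $\gamma>\gamma_o$, where $|\tfrac1p-\tfrac12|<\tfrac{\gamma_o}{2\gamma}<\tfrac12$ genuinely constrains $p$.

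For this, set $\theta=\tfrac2p-1\in(0,1)$, so that $\tfrac1p=\tfrac{1-\theta}2+\theta\cdot1$; interpolating an $L^2$ bound at ${\rm Re}\,\zeta=0$ against an $L^1$ bound at ${\rm Re}\,\zeta=1$ produces the $L^p$ bound at $\zeta=\theta$. I would construct an operator-valued family $\{T(\zeta)\}$, analytic on the closed strip $0\le{\rm Re}\,\zeta\le1$, with $T(\theta)=f(L)$, such that $T(i\tau)$ is uniformly (in $\tau$) bounded on $L^2(\Lambda^k(M))$ and $T(1+i\tau)$ is uniformly bounded on $L^1(\Lambda^k(M))$, with growth in $\tau$ admissible for Stein's theorem; the Stein Interpolation Theorem (Lemma~\ref{SInt}) then gives $f(L)=T(\theta)$ bounded on $L^p(\Lambda^k(M))$. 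Each $T(\zeta)$ should be $h_\zeta(L)$ for a holomorphic function $h_\zeta$ whose strip of holomorphy has half-width a small positive number at ${\rm Re}\,\zeta=0$ (so $h_{i\tau}$ is merely bounded on $[0,\infty)$ and $h_{i\tau}(L)$ is $L^2$-bounded by the spectral theorem), half-width exceeding $\gamma/2$ at ${\rm Re}\,\zeta=1$ (so $h_{1+i\tau}$ satisfies (1), (2) with an effective parameter $\gamma_o'\ge\gamma$ and Theorem~\ref{thm41-2} applies on $M$ directly), and half-width equal to $\tfrac{\gamma_o}2+\eps_o$ at $\zeta=\theta$ (so $h_\theta=f$). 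Exactly as in the proof of Theorem~\ref{thm3} — where $\{T(\zeta)\}$ is the family of resolvents $(H+(\tfrac\gamma2(\zeta+a+ib))^2)^{-1}$, whose effective half-width is $\tfrac\gamma2|{\rm Re}\,\zeta+a|$ — one wants this half-width to be affine in ${\rm Re}\,\zeta$ with real slope $\tfrac\gamma2$, running from $\tfrac{\gamma a}2$ at ${\rm Re}\,\zeta=0$ to $\tfrac\gamma2(1+a)$ at ${\rm Re}\,\zeta=1$; solving $\tfrac\gamma2(\theta+a)=\tfrac{\gamma_o}2+\eps_o$ makes the requirement $a>0$ read $\tfrac\gamma2\theta<\tfrac{\gamma_o}2+\eps_o$, i.e.\ $|\tfrac1p-\tfrac12|\gamma\le\tfrac{\gamma_o}2$, which is exactly the hypothesis.

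The hard part — and the reason the corollary is not a one-line consequence of Theorem~\ref{thm41-2} — is building this analytic family for a \emph{general} $f$. For the resolvent $f(w)=1/(w^2-z^2)$ the family is manifest and its effective $\gamma_o$ is simply $2|{\rm Re}\,z|$; for an arbitrary $f$ satisfying (1), (2) one cannot just rescale the argument, since a complex dilation of $L$ pushes its spectrum out of the strip on which $f$ is holomorphic. My expectation is that one instead threads the analytic parameter through the $A_N+B_N$ decomposition of Theorem~\ref{thm41-2}: express $T(\zeta)$ through $\cos(\xi\sqrt{H-c(\zeta)+s\alpha})$ acting on a resolvent power $(H-c(\zeta)+\alpha)^{-m}$, with the exponential weight in Lemma~\ref{lem41} carrying the $\zeta$-dependence, and then verify that Lemma~\ref{lem41} — whose $L^2$-per-annulus estimate \eqref{est-9} is valid for \emph{every} decay rate $\sigma>0$ — still delivers the $L^1$ bound at ${\rm Re}\,\zeta=1$, using the weighted $L^p\!\to\!L^q$ estimates of Theorem~\ref{corl31} to absorb the factors $\phi(y)$ that appear, and that all bounds depend admissibly on ${\rm Im}\,\zeta$. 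Establishing analyticity of $\zeta\mapsto T(\zeta)$ on the closed strip, together with this admissible growth, is where the care will be needed.
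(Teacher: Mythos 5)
Your proposal follows the same route as the paper, whose entire proof of this corollary is the single sentence ``This follows from the Stein Interpolation Theorem (Lemma~\ref{SInt}) as in Taylor~\cite{tay}'': interpolate between the $L^2$ bound coming from the spectral theorem and the $L^1$ bound coming from (the proof of) Theorem~\ref{thm41-2}, with the hypothesis $\left|\tfrac1p-\tfrac12\right|\gamma\le\tfrac{\gamma_o}{2}$ entering exactly through the slope-$\tfrac{\gamma}{2}$ bookkeeping you describe. The analytic family you are worried about in your last paragraph can simply be taken on the Fourier side as $\hat h_\zeta(\xi)=\hat f(\xi)\,e^{-\frac{\gamma}{2}(\zeta-\theta)|\xi|}$, which at ${\rm Re}\,\zeta=0$ is still an integrable transform (hence a bounded multiplier and an $L^2$-bounded operator) precisely because $\tfrac{\gamma}{2}\theta\le\tfrac{\gamma_o}{2}$, and at ${\rm Re}\,\zeta=1$ decays faster than $e^{-\gamma\xi/2}$ so that Lemma~\ref{lem41} and the $B_N$-argument of Theorem~\ref{thm41-2} give the $L^1$ bound; with that one addition your plan is correct and in fact more detailed than the paper's.
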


\begin{proof}
This follows from the Stein  Intepolarion Theorem (Lemma~\ref{SInt}) as in Taylor \cite{tay}.
\end{proof}

\begin{remark}
If $\gamma=0$, our result is reduced to the result of Sturm~\cite{sturm}. The main difference is that in the proof of Sturm's result, only
heat kernel estimates are needed, whereas in our proof, we need to use both the heat kernel estimates and the wave kernel. The use of the wave kernel on manifolds that might not have bounded geometry seems new, and we expect that this method will have further applications in geometric analysis.
\end{remark}
\section{The $L^p$-spectrum of hyperbolic space} \label{S3}

In this and the following section, we provide a comprehensive study of the spectrum of the Hodge Laplacian on $L^p(\Lambda^k(\mathbb{H}^{N+1}))$, where $\mathbb{H}^{N+1}$ is the $N+1$-dimensional hyperbolic space. We would like to point out that for $p=2$, or $k=0$ (the Laplacian on functions), our results are partially known.

Since  $\mathbb{H}^{N+1}$ is a homogeneous manifold, the $L^2$ spectra and the essential spectra coincide, that is  $\sigma(2,k)=\sigma_{\rm ess}(2,k)$, and we will see that this is also true for the $L^p$ spectrum. Our main goal in this section will be to prove the first part of Theorem \ref{thm1} by computing $\sigma(p,k)$. We begin with the following observation regarding the parabola $Q_{p,k}$.
\begin{lem} \label{lemQp}
For $1\leq p \leq 2$ the   parabolic region $Q_{p,k}$   has as its boundary the parabola
\begin{equation}\label{Qp}
P_{p,k} = \left\{-{ \left(\tfrac Np + is -k\right)\left( N \left(\tfrac 1p-1\right) + is +k\right)}\;\Big|\, s\in \mathbb{R}\, \right\}
\end{equation}
 and is the region to the right of $P_{p,k}$.

Moreover, for $1\leq p\leq 2$, we have
\begin{equation}\label{containment}
Q_{p,k}=\bigcup_{p\leq q\leq 2} P_{q,k}.
\end{equation}
\end{lem}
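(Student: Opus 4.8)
The plan is to parametrize the two descriptions of $Q_{p,k}$ and check they agree. First I would analyze $P_{p,k}$: write $w(s) = -\left(\tfrac Np + is -k\right)\left(N\left(\tfrac 1p-1\right)+is+k\right)$. Setting $a = \tfrac Np - k$ and $b = N(\tfrac 1p -1)+k = -(\tfrac N2-k) - (\tfrac N{p^*}-\tfrac N2)$ (using $1/p+1/p^* = 1$), one computes $w(s) = -(a+is)(b+is) = s^2 - ab - i(a+b)s$. Now the key algebraic identity: $a + b = N/p + N(1/p-1) = N(2/p - 1) = 2(\tfrac Np - \tfrac N2)$, and $a - b = 2(\tfrac N2 - k) - \big(\text{cross terms}\big)$; more cleanly, $-ab = \left(\tfrac{a-b}{2}\right)^2 - \left(\tfrac{a+b}{2}\right)^2$ where $\tfrac{a-b}{2} = \tfrac N2 - k$ and $\tfrac{a+b}{2} = \tfrac Np - \tfrac N2$. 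Hence
\[
w(s) = \left(\tfrac N2-k\right)^2 + s^2 - \left(\tfrac Np-\tfrac N2\right)^2 - 2i\left(\tfrac Np-\tfrac N2\right)s.
\]
I then want to recognize this as $\left(\tfrac N2-k\right)^2 + z^2$ for a suitable $z$. Setting $z = z(s)$ with ${\rm Im}\,z = -s$ forces, from matching the imaginary part $2\,{\rm Re}(z)\,{\rm Im}(z) = -2(\tfrac Np-\tfrac N2)s$, that ${\rm Re}(z) = \tfrac Np - \tfrac N2$; and then ${\rm Re}(z^2) = (\tfrac Np-\tfrac N2)^2 - s^2$ matches. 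So $P_{p,k} = \{(\tfrac N2-k)^2 + z^2 : {\rm Re}(z) = \tfrac Np-\tfrac N2\}$, i.e. exactly the image of the \emph{boundary} line ${\rm Re}(z) = \tfrac Np - \tfrac N2 = -\left(\tfrac N{p^*}-\tfrac N2\right)$ of the vertical strip $|{\rm Re}(z)| \le |\tfrac Np-\tfrac N2|$ under the map $z \mapsto (\tfrac N2-k)^2 + z^2$. Comparing with \eqref{m-1-1}, which is the image of the strip $|{\rm Im}(z)|\le |\tfrac Np-\tfrac N2|$, I would note the map $z\mapsto iz$ sends one strip to the other and commutes with $z\mapsto z^2$ up to sign in a way that is absorbed; so $Q_{p,k}$ is the image of the closed strip and $P_{p,k}$ the image of its boundary line. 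Since $z\mapsto (\tfrac N2-k)^2+z^2$ maps the half-plane ${\rm Re}(z)\ge |\tfrac Np-\tfrac N2|$ (for $p<2$ this is a genuine half-plane as $\tfrac Np-\tfrac N2 \ne 0$) onto the closed region to the right of the parabola $P_{p,k}$, and likewise the mirror half-plane, and the whole strip image $Q_{p,k}$ is exactly that right region, the first assertion follows. I should treat $p=2$ (where the strip degenerates to the line ${\rm Re}(z)=0$ and $Q_{2,k}$ is the ray $[(\tfrac N2-k)^2,\infty)$) as a boundary case, but the statement is only claimed for $1\le p\le 2$ and the formula for $P_{p,k}$ still makes sense there.

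For the second claim, \eqref{containment}, I would use the strip description. As $q$ ranges over $[p,2]$, the quantity $\tfrac Nq - \tfrac N2$ ranges over $[0,\ \tfrac Np-\tfrac N2]$ (monotone and continuous, decreasing in $q$), so the boundary lines ${\rm Re}(z) = \pm(\tfrac Nq-\tfrac N2)$ sweep out exactly the closed strip $|{\rm Re}(z)|\le \tfrac Np-\tfrac N2$. Applying the map $z\mapsto (\tfrac N2-k)^2+z^2$ and using that it carries this strip onto $Q_{p,k}$ and each boundary line onto $P_{q,k}$, we get $\bigcup_{p\le q\le 2} P_{q,k} = Q_{p,k}$. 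The only point needing a word of care is surjectivity of the sweep: every point of the strip lies on exactly one of the vertical lines ${\rm Re}(z) = c$ with $|c|\le \tfrac Np-\tfrac N2$, and each such $c$ equals $\pm(\tfrac Nq-\tfrac N2)$ for a unique $q\in[p,2]$, which is immediate by the intermediate value theorem.

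The main obstacle is purely bookkeeping: keeping the two strip conventions ($|{\rm Re}\,z|$ vs.\ $|{\rm Im}\,z|$) straight and verifying the algebraic identity $-ab = (\tfrac N2-k)^2 - (\tfrac Np - \tfrac N2)^2$ together with the correct sign in $w(s) = s^2 - ab - i(a+b)s$, so that the parametrization of $P_{p,k}$ really matches the image of a line under $z\mapsto (\tfrac N2-k)^2+z^2$. Once that identity is in hand, everything else is elementary topology of parabolic regions in $\C$. I would also double-check the degenerate case $k=(N+1)/2$ with $N$ odd is not in the range $0\le k\le N/2$ being discussed here, so the convention $Q_{p,k}=Q_{p,N+1-k}$ does not interfere.
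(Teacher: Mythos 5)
The paper leaves this lemma as an exercise, so there is no official proof to compare against; your overall strategy (expand the product defining $P_{p,k}$ and match it with the image of the strip in \eqref{m-1-1} under $z\mapsto(\tfrac N2-k)^2+z^2$) is the natural one, and your displayed formula
\[
w(s)=\left(\tfrac N2-k\right)^2+s^2-\left(\tfrac Np-\tfrac N2\right)^2-2i\left(\tfrac Np-\tfrac N2\right)s
\]
is correct. But the step immediately after it contains a sign error that breaks the identification. With $\mathrm{Re}(z)=\tfrac Np-\tfrac N2$ and $\mathrm{Im}(z)=-s$ you get $\mathrm{Re}(z^2)=(\tfrac Np-\tfrac N2)^2-s^2$, whereas matching $w(s)$ requires $\mathrm{Re}(z^2)=s^2-(\tfrac Np-\tfrac N2)^2$; these agree only when $s^2=(\tfrac Np-\tfrac N2)^2$, so the claim that the real parts ``match'' is false. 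The correct factorization is $w(s)=(\tfrac N2-k)^2+\bigl(s-i(\tfrac Np-\tfrac N2)\bigr)^2$, i.e.\ $P_{p,k}$ is the image of the \emph{horizontal} line $\mathrm{Im}\,z=-(\tfrac Np-\tfrac N2)$ --- a boundary line of the strip in \eqref{m-1-1} itself --- not of the vertical line $\mathrm{Re}\,z=\tfrac Np-\tfrac N2$. The subsequent attempt to repair this via $z\mapsto iz$ (``up to sign in a way that is absorbed'') does not work: since $(iz)^2=-z^2$, the image of the vertical strip under $z\mapsto(\tfrac N2-k)^2+z^2$ is the region to the \emph{left} of a left-opening parabola with vertex at $(\tfrac N2-k)^2+(\tfrac Np-\tfrac N2)^2$, a genuinely different set from $Q_{p,k}$, whose boundary opens to the right and has vertex at $(\tfrac N2-k)^2-(\tfrac Np-\tfrac N2)^2$. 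The sign is exactly not absorbed.

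The fix is one line. Writing $c=\tfrac Np-\tfrac N2\ge0$ and $w(s)=(\tfrac N2-k)^2+(s-ic)^2$, the set $P_{p,k}$ is the image of the boundary line $\mathrm{Im}\,z=-c$ of the strip $|\mathrm{Im}\,z|\le c$; eliminating $s$ gives the right-opening parabola $x=(\tfrac N2-k)^2-c^2+y^2/(4c^2)$, and since for fixed $y$ the quantity $y^2/(4v^2)-v^2$ is decreasing in $|v|$, the images of the lines $\mathrm{Im}\,z=v$ with $|v|\le c$ fill exactly the closed region to the right of that parabola, which proves the first claim. Your sweeping argument for \eqref{containment} then goes through verbatim once ``$\mathrm{Re}$'' is replaced by ``$\mathrm{Im}$'': as $q$ runs over $[p,2]$, $\tfrac Nq-\tfrac N2$ runs monotonically over $[0,c]$, so the lines $\mathrm{Im}\,z=\pm(\tfrac Nq-\tfrac N2)$ sweep out the strip and their images sweep out $Q_{p,k}$.
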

 Observe that for a fixed $p$ the region   $Q_{p,k}$ increases in $k$ for $1\leq k \leq   (N+1)/2$.  The parabolic region reduces to the set $[(\frac N2-k)^2,\infty)$ in the case $p=2.$ We leave the proof of the Lemma as an exercise to the reader.

To prove the first part of Theorem \ref{thm1}  we will first need to construct a class of approximate eigenforms for every point in the $L^p$-spectrum. We begin by providing a set of smooth $k$-eigenforms which form a global basis of $k$-forms over $\mathbb{H}^{N+1}$.

Consider the upper-half plane model for $\mathbb{H}^{N+1}$
\[
\{(x,y)\,|\, x\in \mathbb{R}^N, y>0\}
\]
endowed with the metric
\begin{equation}\label{rmetric}
g=\frac{1}{y^2}(dx^2+dy^2),
\end{equation}
where $dx^2$ is the Euclidean metric on $\mathbb R^N$.
The Laplacian on smooth functions over $\mathbb{H}^{N+1}$ is given by
\[
\Delta f = - y^{N+1} \frac{\p}{\p y}\left(y^{1-N}\frac{\p f}{\p y}\right) + y^2\, \Delta_x f
\]
where $\Delta_x = -\sum_i \p^2 / {\p x_i}^2$ is the Laplacian over the Euclidian space $\mathbb{R}^N$.

It is clear that the functions $y^\mu$ for any $\mu\in \mathbb{C}$ are formal eigenfunctions of the Laplacian  and satisfy
\[
\Delta y^\mu = -\mu(\mu-N) y^\mu.
\]

If we denote as $dy, dx^1, \ldots, dx^N$  the dual 1-forms to the coordinate frame field ${\p y}, {\p x_1}, \ldots,  {\p x_N},$ then the space of $k$-forms over hyperbolic space has as a basis $k$-forms of the following two types
\[
dy\wedge dx^I \quad {\rm and} \quad dx^J
\]
where $dx^I= dx^{i_1}\wedge \cdots \wedge dx^{i_{k-1}}$ with $i_m \in \{1,\ldots, N\}$ and $i_1<\cdots <i_{k-1}$, whereas  $dx^J= dx^{j_1}\wedge \cdots \wedge dx^{j_{k}}$ with  $j_m \in \{1,\ldots, N\}$ and $j_1<\cdots <j_{k}$.

We observe that
\[
|dy|=|dx^i|= y,\ \ {\rm for  \ \  all}  \ \ i, \ \ \rm{and}
\]
\[
|dy\wedge dx^I|=|dx^J|= y^k
\]
where $|\cdot|$ is the norm induced by the Riemannian inner product.

If  $D$ is the Levi-Civita connection, a standard calculation gives
\begin{align*}
\begin{split}
D_{\p x_i} (dy) = -\frac{1}{y}dx^i,  \qquad &  D_{\p y} (dy) =  \frac{1}{y}dy; \\
D_{\p x_i} (dx^j) = \delta_{ij}\frac{1}{y}dy, \qquad &   D_{\p y} (dx^j) = \frac{1}{y}dx^j,
\end{split}
\end{align*}
and
\begin{align}\label{C1}
\begin{split}
D_{\p x_i} (dy\wedge dx^I) = - \frac{1}{y} dx^i\wedge dx^I,  \qquad  &  D_{\p y} (dy\wedge dx^I) =  \frac{k}{y}
dy\wedge dx^I; \\
D_{\p x_i} (dx^J) = -  \frac{1}{y} \iota(\p x_i) dy \wedge dx^J, \qquad  &   D_{\p y} (dx^J) = \frac{k}{y}dx^J,
\end{split}
\end{align}
where $\iota(\p x_i)$  denotes the contraction operator in the direction $\p x_i.$
Furthermore, for any complex number $\mu$, we have
\begin{align}\label{C3}
\begin{split}
&\delta (y^\mu dy\wedge dx^I )=-(\mu-N-1+2k) y^{\mu+1} \, dx^I\quad  \rm and\\
&\delta  (y^\mu dx^J)=0.
\end{split}
\end{align}

We thus  obtain
\begin{align} \label{D1}
\Delta  (y^\mu \, dy\wedge dx^I) &= d\delta (y^\mu \,dy\wedge dx^I) = -(\mu+1)(\mu-N-1+2k) \, y^\mu\, dy\wedge dx^I \\
\Delta  (y^\mu \, dx^J) &=  \delta  d (y^\mu \,dx^J)  = -\mu (\mu-N+2k)\,y^\mu \,dx^J. \label{D2}
\end{align}

It follows that $y^\mu\, dy\wedge dx^I, \ \ y^\mu\, dx^J$ are formal eigenforms of the Laplacian for any $\mu\in \mathbb{C}$.

We will first prove that the $L^p$-spectrum contains the parabolic region.
\begin{prop} \label{HypThm1}
For $0\leq k \leq (N+1)/2$ and  $1\leq p \leq 2$ the $L^p$-spectrum of the $k$-form Laplacian on  $\mathbb{H}^{N+1}$, $\sigma (p,k)$, contains $Q_{p,k}$ given in \eref{m-1-1}.

In the case when $N$ is odd and $k=(N+1)/2$,  the $L^p$-spectrum also contains the point $\{0\}$ for all $1\leq p \leq \infty$, which is in fact a point in the essential spectrum.
\end{prop}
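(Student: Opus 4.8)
The plan is to realize each point of $Q_{p,k}$ as a point of $\sigma(p,k)$ by exhibiting approximate eigenforms, organizing the argument around the \emph{matching} $L^q$-exponent. By Lemma~\ref{lemQp}, $Q_{p,k}=\bigcup_{p\le q\le 2}P_{q,k}$; moreover every point of $Q_{p,k}$ lies in the closure of $\bigcup_{p\le q<2}\bigl(P_{q,k}\setminus\mathbb{R}\bigr)$, since the real points of $Q_{p,k}$ are the vertices of the $P_{q,k}$ --- each a limit, as $s\to0$, of non-real points of the same parabola --- together with the half-line $P_{2,k}=[(N/2-k)^2,\infty)$, whose points are limits of non-real points of $P_{q,k}$ as $q\uparrow2$. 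Since $\sigma(p,k)$ is closed, it suffices to show that every non-real $\lambda\in P_{q,k}$ with $q\in[p,2)$ belongs to $\sigma(p,k)$. For this I would first show $\lambda\in\sigma(q,k)$ and then transport it: if $(\Delta-\lambda)^{-1}$ is bounded on $L^p$ and (by self-adjointness, since $\lambda\notin\mathbb{R}$) on $L^2$, then by interpolation (Lemma~\ref{SInt}) it is bounded on $L^q$ for $p\le q\le2$, so $\sigma(q,k)\subseteq\sigma(p,k)\cup\sigma(2,k)$; and $\sigma(2,k)\subseteq\mathbb{R}$, so a non-real point of $\sigma(q,k)$ lies in $\sigma(p,k)$.

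The heart of the matter is thus: every $\lambda\in P_{q,k}$, $q\in[p,2)$, lies in $\sigma(q,k)$. Comparing \eqref{Qp} (with $p$ replaced by $q$) with the formal eigenvalue $-\mu(\mu-N+2k)$ of $y^\mu\,dx^J$ in \eqref{D2}, one has $\lambda=-\mu(\mu-N+2k)$ with $\mu=\tfrac Nq-k+is$ for the appropriate $s\in\mathbb{R}$, so that $y^\mu\,dx^J$ is a smooth, globally defined formal eigenform with $|y^\mu\,dx^J|=y^{N/q}$. Writing $y=e^\tau$ and fixing a large $R$, I would cut off by $\eta_R=\eta_1(\tau)\,\eta_2(x)$, where $\eta_1$ is $1$ for $|\tau|\le R^2$ and $0$ for $|\tau|\ge R^2+R$, and $\eta_2$ is $1$ on the Euclidean ball $\{|x|\le e^{2R^2}\}$ and $0$ outside $\{|x|\le2e^{2R^2}\}$. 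The point of working in $\tau=\log y$ is that $|d\tau|=1$, so $|d\eta_1|+|\Delta\eta_1|\lesssim1/R$; the point of the exponentially large $x$-ball is that $|dx^i|=y\le e^{R^2+R}$ on the support, so $|d\eta_2|+|\Delta\eta_2|$ is super-exponentially small there. Using the Leibniz rule for $\Delta$ applied to $\eta_R\cdot(y^\mu\,dx^J)$ --- the eigenform equation cancels the $\eta_R\Delta(y^\mu\,dx^J)$ term --- and the bound $|\nabla(y^\mu\,dx^J)|\lesssim(1+|\mu|)\,y^{N/q}$, immediate from \eqref{C1}, one gets that $|(\Delta-\lambda)(\eta_R\,y^\mu\,dx^J)|$ is $\lesssim\tfrac{1+|\mu|}{R}\,y^{N/q}$ on the $\tau$-transition region $\{R^2\le|\tau|\le R^2+R\}$ and a super-exponentially small multiple of $y^{N/q}$ on the $x$-transition region $\{e^{2R^2}\le|x|\le2e^{2R^2}\}$.

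The decisive observation is scale invariance at the matching exponent: $|y^\mu\,dx^J|^q\,dV=y^{-1}\,dx\,dy=dx\,d\tau$, so the $L^q$-density is translation invariant in $\tau$. Hence the $L^q$-mass of $y^\mu\,dx^J$ over the bulk $\{|\tau|\le R^2,\ |x|\le e^{2R^2}\}$ exceeds its mass over the $\tau$-transition window by a factor $\sim R^2/R$, while over the $x$-transition the two masses are merely comparable but the coefficient there is super-exponentially small. Assembling the pieces one obtains $\|(\Delta-\lambda)(\eta_R\,y^\mu\,dx^J)\|_{L^q}\le C(1+|\mu|)\bigl(R^{-1-1/q}+e^{R-R^2}\bigr)\,\|\eta_R\,y^\mu\,dx^J\|_{L^q}$, which tends to $0$ as $R\to\infty$; since $\eta_R\,y^\mu\,dx^J$ is smooth and compactly supported, hence in ${\mathcal Dom}(\Delta_q)$, this rules out a bounded $(\Delta_q-\lambda)^{-1}$ and forces $\lambda\in\sigma(q,k)$, completing the chain. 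I expect this error bookkeeping to be the main obstacle --- one must track the sizes of $d\eta_R$, $\Delta\eta_R$, and $\nabla(y^\mu\,dx^J)$ over each piece of the transition set and verify that the bulk genuinely dominates. Scale invariance is exactly what makes this succeed, and is also why the argument is run at $L^q$ and then interpolated down: away from the matching exponent the $L^q$-density is $y^{\textup{const}}\,dx\,d\tau$ with a nonzero exponent, the mass concentrates precisely on the transition set, and the naive truncation fails.

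Finally, for $N$ odd and $k=(N+1)/2$: since $\mathbb{H}^{N+1}$ is even-dimensional and $k$ is its middle degree, it carries an infinite-dimensional space of $L^2$-harmonic $k$-forms, so $0$ is an $L^2$-eigenvalue of infinite multiplicity and $0\in\sigma_\textup{ess}(2,k)$. Interpolating a bounded resolvent between $L^p$ and $L^{p^*}$ (and using $\sigma(p,k)=\sigma(p^*,k)$) gives $\sigma(2,k)\subseteq\sigma(p,k)$ for every $p\in[1,\infty]$, so $0\in\sigma(p,k)$ for all such $p$ --- in the ranges $p\le 2N/(N+1)$ and $p\ge 2N/(N-1)$ this is in any case already contained in the $Q_{p,k}$ assertion. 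Moreover $\mathbb{H}^{N+1}$ is homogeneous, so $\sigma(p,k)=\sigma_\textup{ess}(p,k)$ --- a transitive isometry group commutes with $\Delta$ and hence cannot leave a nonzero finite-rank Riesz projection invariant --- whence $0\in\sigma_\textup{ess}(p,k)$.
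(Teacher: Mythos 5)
Your proof is correct, and for the containment $Q_{p,k}\subset\sigma(p,k)$ it is essentially the paper's argument: reduce to the boundary parabolas via $Q_{p,k}=\bigcup_{p\le q\le 2}P_{q,k}$, and realize each point of $P_{q,k}$ by truncating the formal eigenforms $y^{N/q-k+is}\,dx^J$, the key point in both versions being that at the matching exponent the $L^q$-density $|y^\mu\,dx^J|^q\,dV=d\tau\,dx$ (with $\tau=\log y$) is translation-invariant, so a logarithmic cutoff has bulk mass dominating the transition mass. The differences are in bookkeeping: the paper keeps the $x$-cutoff fixed and uses an asymmetric window $\log y\in[-n^{3p},\log n]$, so that the $x$-derivative errors (which carry extra powers of $y$) integrate to order $n^{2p}$ against a bulk of order $n^{3p}$, whereas you take a symmetric $\tau$-window and an $x$-cutoff of radius $e^{2R^2}$ so that those errors are suppressed by $e^{R-R^2}$; both succeed. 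Your transport step (interpolating between $L^p$ and $L^2$ and recovering the real points by a closure argument) is a more roundabout version of the paper's one-line appeal to the monotonicity $\sigma(q,k)\subseteq\sigma(p,k)$ for $p\le q\le 2$, which comes from interpolating between $L^p$ and $L^{p^*}$ and needs no case split; either way one quietly uses consistency of the resolvents on overlapping $L^r$-spaces, as the paper does too. Where you genuinely diverge is the point $\{0\}$ in middle degree: you import the infinite-dimensionality of the space of $L^2$-harmonic middle-degree forms (in effect Donnelly's result, Proposition \ref{HypThm2}) and then interpolate and invoke homogeneity, while the paper constructs an explicit exponentially decaying harmonic $(N+1)/2$-form and cuts it off in $x$ to produce Weyl sequences directly in $L^2$ and in $L^1$, which is self-contained and avoids the abstract claim that a transitive isometry group rules out nonzero finite-rank Riesz projections on $L^p$. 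If you keep your version, you should note that the isolated-point case arises only for $2N/(N+1)<p<2N/(N-1)$, where Theorem \ref{thm62} in fact supplies infinitely many genuine $L^p$-harmonic forms, so that $0$ is an eigenvalue of infinite multiplicity as required by the paper's definition of the essential spectrum.
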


\begin{proof}
We first observe  that we only need to prove that for  $1\leq p\leq 2$, we have $\sigma(p,k)\supset P_{p,k}$. This is because   for any  $1\leq p\leq q\leq 2$, by the monotonicity of the spectra,  we have $\sigma(p,k)\supset\sigma(q,k)\supset P_{q,k}$. Thus, if $\sigma(q,k)\supset P_{q,k}$, by ~\eqref{containment}
we have
\[
\sigma(p,k)\supset \bigcup_{p\leq q\leq 2} P_{q,k}=Q_{p,k}.
\]

Let $\omega=y^{\mu} \, f(x,y) \, \eta$, where $\eta$ is either $dy\wedge dx^I$ or $dx^J$, $f(x,y)$ is a smooth compactly supported function and $t,s \in \mathbb{R}$.  For $\mu=t+is$ and given that the volume element is $dv= y^{-N-1} dy dx$, we have
\begin{equation} \label{D0}
\|\omega\|_p^p=\int_{\mathbb H^{N+1}} y^{tp + kp-N-1} \,|f(x,y)|^p\; dy dx.
\end{equation}
Setting $\mu=N/p -k +is$ in \eref{D1} and \eref{D2}, the corresponding eigenvalues are
\begin{align} \label{D3}
&  -(N/p -k+1+is)(N/p -N+k-1+is) \\
& -(N/p -k+is)(N/p -N+k+is)  \label{D4}
\end{align}
respectively, for any $s\in \mathbb{R}$. Both of these represent parabolas in the complex plane, and \eqref{D4} is the optimal one for $k$ and $p$ as in the theorem, coinciding with the boundary of $Q_{p,k}$, which is the parabola  $P_{p,k}$ in \eqref{Qp} from Lemma \ref{lemQp}.

Therefore, we consider the approximate eigenforms
\[
\omega_n = f_n(x,y)\, y^{N/p -k+ is } \, dx^J
\]
with
\[
f_n(x,y)=  c_n(\log y) \, b(x)
\]
where $b(x)$ is an appropriate function with compact support in the $x$ variable and  $c_n$ is a function with support in $[-n^{3p}, \log n]$ which takes the value 1 on $[-n^{3p}+1, \log n -1]$ and such that $|c_n'|, |c_n''|\leq C.$ It follows that
\[
\|\omega_n\|_{L^p}^p = \int_{\mathbb{R}^N} \int_{e^{-n^{3p}}}^n y^{-1}\, c_n^p(\log y) \, b^p(x) \,dy dx \geq C' \int_{e^{-n^{3p}+1}}^{n/e} y^{-1} \, dy \geq C'\, n^{3p}.
\]

At the same time, for any smooth function  $f$ and $k$-form $\eta$ we have the identity
\[
\Delta (f\, \eta) = f\, \Delta \eta- 2 \n_{\!\n\! f}\, \eta + (\Delta f) \,\eta.
\]
Using \eref{D2} and the definition of the Laplacian on $\mathbb{H}^{N+1}$ we compute,
\begin{align*}
\Delta \omega_n  = & f_n \Delta (y^{\mu} \,dx^J) + (\Delta f_n\, ) \,y^{\mu} \,dx^J -2 \n_{\n f_n} (y^{\mu} \,dx^J) \\
= &-{ \mu (\mu+2k-N)}\,\omega_n  +  y^{\mu+2}\, c_n \, (\Delta_x b) \,dx^J -(2\mu-N+2k) \, y^{\mu} \, c_n' \,b\,dx^J \\
& \quad -   y^{\mu}  \, c_n''\,b \,dx^J  +2\sum_i  y^{\mu+1}  \frac{\p b}{ \p x_i}\, c_n \, \iota (\p x_i)(dy\wedge dx^J),
\end{align*}
where $\mu=N/p -k+ is $.  Taking $\lambda = -\mu (\mu+2k-N)$ we see that
\begin{align*}
\|\Delta \omega_n -\lambda\omega_n\|_{L^p}^{p} \leq & C\int_{\R^n} \int_{{\rm spt}\, c_n'} \, y^{-1} \,|b|\,dy +  C \int_{\mathbb{R}^n} \int_{{\rm spt}\, c_n} \, y^{2p-1}\, |\Delta_x b |\,dydx\\
&  + C \int_{\mathbb{R}^N} \int_{{\rm spt}\, c_n} \, y^{p-1} \, |\n_x b |\,dydx.
\end{align*}
The first integral on the right side is bounded. The remaining two integrals are of order $n^{2p}$ and $n^{p}$, respectively.

Therefore, for any $\eps>0$ we can find $n$ sufficiently large such that
\[
\|\Delta \omega_n -\lambda\omega_n\|_{L^p} \leq \eps \|\omega_n\|_{L^p}.
\]
We then conclude that $\lambda$ is in  $\sigma (p,k)$  for $1\leq p \leq 2$.

We will now show that $\{0\}$ is always a point in the essential spectrum for $k=(N+1)/2$, when $N$ is odd. Note that $dy\wedge dx^I$ and $*(dy\wedge dx^I) = dx^{\bar{I}}$ are $*$-dual forms which are both closed and co-closed by \eref{C3}.
Moreover, for any $\nu>0$ the  $(N+1)/2$-form
\begin{equation} \label{halfEF}
\phi = e^{-\sqrt{\nu} \, y } e^{ i\sqrt{\nu} \, x_j } \, dx^j \wedge dx^I + i \, e^{-\sqrt{\nu} \, y } e^{ i\sqrt{\nu} \, x_j } \, d y \wedge dx^I
\end{equation}
is also formally harmonic as it satisfies $\phi =\delta \phi= \Delta \phi =0 $
whenever $dx^I$ is an $(N-1)/2$ - form such that $dx^j \wedge dx^I \neq 0$. Let $\eps>0$. By multiplying $\phi$ with appropriate cut-off functions $b_\eps=b_\eps(x)$ which is  compactly supported   in $\mathbb{R}^N$, we can get
\[
\|\Delta(b_\eps \phi)\|_{L^2} \leq C\, \left\| \; \left(|y^2 \Delta b_\eps| +  |\nabla b_\eps| \right)  |\phi|\right\|_{L^2} \leq \eps \| b_\eps \phi \|_{L^2}
\]
since $\phi$ decreases exponentially in $y$ and $\int_0^\infty |\phi|^2 \, y^{-N-1} dy <\infty$.   As a result  $\{0\}$  belongs to $\sigma (2,(N+1)/2)$  (and in fact to $\sigma_\textup{ess}(2,(N+1)/2)$).

Similarly, given the exponential decay of $\phi$ in $y$, and the fact that $\{0\}$ belongs to the $L^1$-spectrum of the Laplacian on $\mathbb{R}^N$, we can also find cut-off functions $b_\eps=b_\eps(x)$ which are compactly supported  in $\mathbb{R}^N$, such that
\[
\|\Delta(b_\eps \phi)\|_{L^1} \leq C\, \left\| \; \left(|y^2 \Delta b_\eps| +  |\nabla b_\eps| \right)  |\phi|\right\|_{L^1} \leq \eps \| b_\eps \phi \|_{L^1}
\]
Therefore, $\{0\}$  belongs to $\sigma (1,(N+1)/2)$ and  in consequence it also belongs to  $\sigma (p,(N+1)/2)$ for all $p \in [1,\infty]$.
\end{proof}

 By the remark after Theorem \ref{thm62} the above argument is only required for $\frac{2N}{N+1}<p<\frac{2N}{N-1}$.

We observe that in the case $p=2$ the parabola $P_{2,k}$ collapses to an interval in the nonnegative real line. The minimum value of the coefficient   in \eref{D3} is
\begin{equation} \label{lam1}
\lambda_1=\left(\tfrac N2-k+1\right)^2,
\end{equation}
and the corresponding minimum value of the coefficient in \eref{D4} is
\begin{equation} \label{lam2}
\lambda_2=\left(\tfrac N2-k\right)^2.
\end{equation}
As Donnelly proved in \cite{Don2}, $\lambda_2$ is the bottom of $\sigma (2,k)$ for $0\leq k \leq N/2$, and of the spectral gap for $k=(N+1)/2$. In other words, our parabolic region coincides with the $L^2$-spectrum in this case.

\begin{prop}[Donnelly \cite{Don2}] \label{HypThm2}
The $L^2$-spectrum of the Laplacian on $k$-forms over the hyperbolic space $\mathbb{H}^{N+1}$ is given by
\[
\sigma (2,k) = \sigma  (2,N+1-k) =\left[\,\left(\tfrac{N}{2} - k\right)^2, \infty\,\right)
\]
for $0\leq k \leq N/2$, and whenever $N$ is odd
\[
\sigma (2,(N+1)/2)= \sigma_{\mathrm ess} (2,(N+1)/2) =  \{0\} \cup \left[\,\tfrac 14, \infty\,\right).
\]
\end{prop}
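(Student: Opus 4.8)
First dispose of the symmetry. The Hodge star $*\colon\Lambda^k\to\Lambda^{N+1-k}$ is a pointwise isometry that extends to a unitary operator on $L^2$ and commutes with $\Delta$, so the Laplacian on $k$-forms and on $(N+1-k)$-forms are unitarily equivalent; hence $\sigma(2,k)=\sigma(2,N+1-k)$ and it suffices to treat $0\le k\le(N+1)/2$. Moreover, since $\mathbb{H}^{N+1}$ is homogeneous, any $L^2$-eigenform can be translated to produce infinitely many orthogonal ones, so every eigenvalue has infinite multiplicity and $\sigma(2,k)=\sigma_{\textup{ess}}(2,k)$.

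For the inclusion ``$\supseteq$'', I would apply Proposition \ref{HypThm1} with $p=2$: the coexact formal eigenforms $y^{N/2-k+is}\,dx^J$, cut off in both variables, give an $L^2$ Weyl sequence at each $\lambda=(\tfrac N2-k)^2+s^2$, so $[(\tfrac N2-k)^2,\infty)\subseteq\sigma(2,k)$; and when $N$ is odd and $k=(N+1)/2$, cutting off the forms in \eqref{halfEF} yields an $L^2$ Weyl sequence at $\lambda=0$. That $0$ lies in the \emph{essential} spectrum then follows from the classical fact that $\mathbb{H}^{N+1}$ with $N+1$ even carries an infinite-dimensional space of genuine $L^2$ harmonic middle-degree forms (assembled as wave packets of the plane-wave solutions \eqref{halfEF}), so $0$ is an eigenvalue of infinite multiplicity.

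The substance is the reverse inclusion. In the upper half-space model $\mathbb{H}^{N+1}$ is homogeneous for the solvable group $\mathbb{R}^N\rtimes\mathbb{R}_{>0}$ acting by $(b,a)\cdot(x,y)=(ax+b,ay)$, and I would separate variables accordingly. Expanding an $L^2$ $k$-form in the global frame $\{dy\wedge dx^I,\ dx^J\}$ and applying the Euclidean Fourier transform in $x\in\mathbb{R}^N$, the Plancherel theorem identifies $L^2(\Lambda^k(\mathbb{H}^{N+1}))$ with a direct integral $\int^{\oplus}_{\mathbb{R}^N}\mathcal H_\xi\,d\xi$ on whose fibres $\Delta$ acts, by the explicit formulas \eqref{C1}--\eqref{D2}, as a system of second-order ordinary differential operators in $y$ relative to the weight $y^{-N-1}\,dy$. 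After the substitution $y=e^{-r}$ and the unitary change removing the weight, each fibre operator becomes a finite orthogonal direct sum --- indexed by the combinatorial types of the multi-indices, and block-diagonalised so that the exact and coexact sectors are separated --- of one-dimensional Schr\"odinger operators $-\tfrac{d^2}{dr^2}+V(r)$ on $L^2(\mathbb{R},dr)$ whose potential $V$ is smooth, satisfies $V(r)\ge(\tfrac N2-k)^2$ for all $r$ (the coexact blocks attaining this value as $r\to+\infty$, the exact blocks tending to the larger constant $(\tfrac N2-k+1)^2$, cf. \eqref{lam1}--\eqref{lam2}), and carries a confining barrier $\sim|\xi|^2e^{2r}\to\infty$ as $r\to-\infty$. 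By the standard decomposition method for such operators --- $V\to\mathrm{const}$ at $+\infty$ forces $[\lim_{r\to+\infty}V,\infty)\subseteq\sigma_{\textup{ess}}$, while $V\ge(\tfrac N2-k)^2$ forces $\sigma\subseteq[(\tfrac N2-k)^2,\infty)$ and leaves no bound state below that value --- each fibre has spectrum contained in $[(\tfrac N2-k)^2,\infty)$, with the coexact blocks making it exactly $[(\tfrac N2-k)^2,\infty)$ in the limit $\xi\to0$. Taking the union over $\xi$ gives $\sigma(2,k)\subseteq[(\tfrac N2-k)^2,\infty)$, which with ``$\supseteq$'' proves the claim for $k\le N/2$. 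In the middle degree $k=(N+1)/2$ the two thresholds collide at $\tfrac14$ and each fibre additionally contains the harmonic subspace at $0$, separated from $[\tfrac14,\infty)$ by the gap $(0,\tfrac14)$; hence $\sigma(2,(N+1)/2)=\{0\}\cup[\tfrac14,\infty)$ with $0\in\sigma_{\textup{ess}}$.

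The main obstacle is the fibrewise reduction and spectral analysis: block-diagonalising the fibre system of ODEs into decoupled scalar Schr\"odinger operators requires organising the frame by the action of the contractions $\iota(\partial x_i)$ and keeping careful track of the Kodaira--Hodge decomposition so that the exact and coexact sectors are not conflated; and one must verify, uniformly in the frequency $\xi$, both the pointwise lower bound $V\ge(\tfrac N2-k)^2$ on every block (equivalently, the sharpness of the relevant McKean-type inequality on hyperbolic space) and that this bound is saturated as $\xi\to0$, so that the union over all fibres is exactly the stated half-line. The coincidence of the two thresholds and the infinite-dimensional kernel in the middle degree also need separate attention.
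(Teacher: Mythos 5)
First, a point of orientation: the paper does not prove this proposition at all --- it is imported verbatim from Donnelly \cite{Don2}, which is why it is labelled as his result. So the relevant question is whether your sketch would stand on its own as a proof. The easy parts are fine and non-circular: the Hodge-star reduction, the homogeneity argument for $\sigma=\sigma_{\textup{ess}}$, and the inclusion $[(\tfrac N2-k)^2,\infty)\cup\{0\}\subseteq\sigma(2,k)$ via the Weyl sequences of Proposition \ref{HypThm1} (whose proof in the paper does not use the present proposition). Your strategy for the reverse inclusion --- separation of variables and reduction to one-dimensional operators --- is also the same general strategy Donnelly uses, though he works in geodesic polar coordinates and decomposes along co-closed/closed eigenforms of $S^N$, whereas you work in the half-space model and Fourier-transform along the horosphere.

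The genuine gap is that the decisive step of your argument is asserted rather than carried out. For $k\geq 1$ the fibre operator is a coupled system on the components along $dy\wedge dx^I$ and $dx^J$ (the coupling is visible in the $\iota(\partial x_i)(dy\wedge dx^J)$ terms of \eqref{C1}), and the entire content of the theorem is the claim that after splitting into exact and coexact sectors every resulting block is a Schr\"odinger operator whose (matrix) potential is bounded below by $(\tfrac N2-k)^2$ uniformly in $\xi$, with the bound saturated only as $\xi\to 0$, and that in the middle degree the operator restricted to the orthogonal complement of its kernel is bounded below by $\tfrac14$. None of this is computed: you flag it yourself as ``the main obstacle,'' but it is not an obstacle to be noted --- it \emph{is} the theorem, and in particular the spectral gap $(0,\tfrac14)$ at $k=(N+1)/2$ cannot be obtained from a pointwise lower bound on a scalar potential without first exhibiting the decoupling and the explicit blocks (this is exactly the delicate part of Donnelly's analysis, and it is also why the paper itself has to work to establish the gap on the $L^p$ level in Theorem \ref{thm51} via Proposition \ref{kspec} rather than by interpolation). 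A secondary slip: with $y=e^{-r}$ the confining term is $|\xi|^2e^{-2r}$, which blows up as $r\to-\infty$; as written, $|\xi|^2e^{2r}\to 0$ there. As it stands the proposal is a plausible roadmap to Donnelly's proof, not a proof.
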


In fact, Mazzeo and Phillips prove that the $L^2$ essential spectrum on $k$-forms is the same as the one above over quotients of hyperbolic space $\mathbb{H}^{N+1}/\Gamma$ that are geometrically finite and have infinite volume \cite{mazz}.  In these case, isolated eigenvalues can also appear in the $L^2$ spectrum.

To prove that the $L^p$-spectrum  $\sigma (p,k)$ is exactly the set of points in the parabolic region $Q_{p,k}$ for $0\leq k \leq N/2$, it suffices to show  that $(\Delta -\xi)^{-1}$ is a bounded operator on $L^p$ for the points $\xi \in  Q_{p,k}$. We will first compute the $L^1$-spectrum and then via interpolation find the $L^p$-spectrum using the known $L^2$-spectrum.  The case $k=(N+1)/2$ for $N$ odd needs to be treated separately as it requires a more delicate argument to prove the existence of a gap.

Since the hyperbolic space $\mathbb{H}^{N+1}$ is symmetric and has uniform exponential growth rate $\gamma=N$, Theorem \ref{thm3} immediately gives the following result.
\begin{lem} \label{lem7}
Let $0\leq k \leq \frac{N}{2}$. Whenever $|{\rm Im}\, z|>\frac N2$,  the resolvent operator $(\Delta -(\frac N2 -k)^2 -z^2)^{-1}$ is a bounded operator on $L^1$.
\end{lem}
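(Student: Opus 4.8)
The plan is to read the statement off Theorem~\ref{thm3} with $p=1$ and $M=\mathbb{H}^{N+1}$. First I would check the hypotheses: since $\mathbb{H}^{N+1}$ has constant sectional curvature $-1$, its Ricci curvature is the constant $-N$, and its curvature operator---hence the Weitzenb\"ock tensor $\mathcal{W}_k$ on $k$-forms for every $k$---is likewise constant, so both are bounded below. Moreover $\mathbb{H}^{N+1}$ is homogeneous, so ${\rm vol}(B_x(R))$ does not depend on $x$ and grows like $e^{NR}$; thus the bound \eqref{3-1a} holds with a constant independent of $x$ and the exponential rate of volume growth is $\gamma=N$.

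Next I would recall from Proposition~\ref{HypThm2} (Donnelly) that for $0\leq k\leq N/2$ the bottom of the $L^2$-spectrum of the Hodge Laplacian on $k$-forms over $\mathbb{H}^{N+1}$ is $\lambda_1=\left(\tfrac N2-k\right)^2$. Choosing $c=\left(\tfrac N2-k\right)^2$, which satisfies $c\leq\lambda_1$ (with equality), the operator $H:=\Delta-c=\Delta-\left(\tfrac N2-k\right)^2$ is a nonnegative operator on $L^2(\Lambda^k(\mathbb{H}^{N+1}))$. With $p=1$ the threshold in Theorem~\ref{thm3} is $\gamma\left|\tfrac1p-\tfrac12\right|=\tfrac N2$, so whenever $|{\rm Im}\,z|>\tfrac N2$ the operator $(H-z^2)^{-1}=\left(\Delta-\left(\tfrac N2-k\right)^2-z^2\right)^{-1}$ is bounded on $L^1(\Lambda^k(\mathbb{H}^{N+1}))$, which is exactly the assertion of the lemma.

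Since this is an immediate corollary there is no genuine obstacle; the only points deserving a sentence of justification are the identification of $\gamma=N$ via homogeneity (so that \eqref{3-1a} holds with a uniform constant, as needed to apply Theorem~\ref{thm3}) and the use of the sharp value of $\lambda_1$ from Proposition~\ref{HypThm2}. One could instead take any $c\leq\left(\tfrac N2-k\right)^2$ and recover the same conclusion after a harmless shift of the spectral parameter, but using $c=\lambda_1$ yields the cleanest statement.
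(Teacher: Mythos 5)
Your proposal is correct and matches the paper's argument exactly: the lemma is obtained by applying Theorem~\ref{thm3} with $p=1$, using that $\mathbb{H}^{N+1}$ has bounded curvature tensors and exponential volume growth rate $\gamma=N$, and taking $c=\lambda_1=\left(\tfrac N2-k\right)^2$ from Proposition~\ref{HypThm2}. Your added verifications of the hypotheses are exactly the points the paper leaves implicit.
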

Here we have also used the value for $\lambda_1$ as given by Proposition \ref{HypThm2}.  By Proposition~\ref{HypThm1} and Lemma \ref{lem7} we immediately get the $L^1$-spectrum.
\begin{corl}
For $0\leq k \leq N/2$, \ $\sigma(1,k)  $ is exactly  the closed parabolic region  $Q_{1,k}$. For $k>(N+1)/2$ the $L^1$-spectrum is given by duality as in Theorem \ref{thm1}.
\end{corl}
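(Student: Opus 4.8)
The plan is to obtain the Corollary by combining Proposition~\ref{HypThm1}, which already gives the inclusion $Q_{1,k}\subseteq \sigma(1,k)$ for $0\leq k\leq N/2$, with Lemma~\ref{lem7}, which supplies the reverse inclusion. So the only thing to prove is that every $\xi\in\mathbb C\setminus Q_{1,k}$ lies in the resolvent set $\rho(1,k)$.

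First I would record an elementary consequence of \eqref{m-1-1}: the region $Q_{1,k}$ is precisely the image of the closed strip $\{z\in\mathbb C:|\mathrm{Im}\,z|\leq N/2\}$ under the map $z\mapsto (\tfrac N2-k)^2+z^2$. Given $\xi\in\mathbb C$, the two square roots $\pm z$ of $\xi-(\tfrac N2-k)^2$ map to the same point $\xi$ and have the same value of $|\mathrm{Im}\,z|$; hence $\xi\in Q_{1,k}$ iff $|\mathrm{Im}\,z|\leq N/2$, and, equivalently, $\xi\notin Q_{1,k}$ iff one can write $\xi=(\tfrac N2-k)^2+z^2$ with $|\mathrm{Im}\,z|>N/2$ (the same description follows from the parabola $P_{1,k}$ in Lemma~\ref{lemQp}). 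For such a $\xi$, Lemma~\ref{lem7} states that $(\Delta-\xi)^{-1}=(\Delta-(\tfrac N2-k)^2-z^2)^{-1}$ is a bounded operator on $L^1(\Lambda^k(\mathbb H^{N+1}))$, so $\xi\in\rho(1,k)$. Therefore $\mathbb C\setminus Q_{1,k}\subseteq\rho(1,k)$, i.e. $\sigma(1,k)\subseteq Q_{1,k}$, and together with Proposition~\ref{HypThm1} this gives $\sigma(1,k)=Q_{1,k}$ for $0\leq k\leq N/2$.

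For $k>(N+1)/2$ I would invoke Hodge duality: the Hodge star operator is an isometric isomorphism of $L^1(\Lambda^k(\mathbb H^{N+1}))$ onto $L^1(\Lambda^{N+1-k}(\mathbb H^{N+1}))$ which commutes with $\Delta$, hence with the heat semigroup and with each resolvent $(\Delta-\xi)^{-1}$; consequently $\sigma(1,k)=\sigma(1,N+1-k)$. Since $k>(N+1)/2$ forces $N+1-k\leq N/2$, the first part applies and gives $\sigma(1,k)=Q_{1,N+1-k}$, which equals $Q_{1,k}$ by the convention $Q_{p,k}=Q_{p,N+1-k}$ adopted after \eqref{m-1-1}.

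There is essentially no obstacle in this final step: all of the analytic weight has already been carried by Theorem~\ref{thm3} (through Lemma~\ref{lem7}) and by the approximate-eigenform construction in Proposition~\ref{HypThm1}. The only item needing a short verification is the purely elementary identification of $\mathbb C\setminus Q_{1,k}$ with the image of $\{|\mathrm{Im}\,z|>N/2\}$ under $z\mapsto(\tfrac N2-k)^2+z^2$, together with the observation that a bounded everywhere-defined inverse means $\xi$ is not in the spectrum, which is immediate from the definitions in Section~\ref{S0}.
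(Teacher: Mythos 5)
Your proposal is correct and follows the paper's own route exactly: the inclusion $Q_{1,k}\subseteq\sigma(1,k)$ from Proposition~\ref{HypThm1}, the reverse inclusion from Lemma~\ref{lem7} via the elementary parametrization of $\mathbb C\setminus Q_{1,k}$ as the image of $\{|\mathrm{Im}\,z|>N/2\}$ under $z\mapsto(\tfrac N2-k)^2+z^2$, and Hodge (Poincar\'e) duality for $k>(N+1)/2$. The paper states this as an immediate consequence of those two results; you have merely filled in the same routine verifications it leaves implicit.
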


The $L^p$-spectrum is now found using interpolation.

\begin{prop} \label{thm41}
 $\sigma(p,k)$ for the Laplacian on $\mathbb{H}^{N+1}$ is exactly as in Theorem \ref{thm1}.
\end{prop}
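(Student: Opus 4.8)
The plan is to prove the two inclusions $\sigma(p,k)\supseteq Q_{p,k}$ and $\sigma(p,k)\subseteq Q_{p,k}$ separately, reducing everything to three already-established facts: Proposition~\ref{HypThm1} (the parabolic region and, in the middle degree, $\{0\}$ lie in the spectrum), Donnelly's computation of the $L^2$-spectrum (Proposition~\ref{HypThm2}), and the resolvent bound of Theorem~\ref{thm3} together with the Stein interpolation machinery used in its proof. The inclusion $Q_{p,k}\subseteq\sigma(p,k)$, and $Q_{p,(N+1)/2}\cup\{0\}\subseteq\sigma(p,(N+1)/2)$ when $N$ is odd, is immediate from Proposition~\ref{HypThm1} for $1\leq p\leq 2$ and, for $2<p\leq\infty$, from $\sigma(p,k)=\sigma(p^*,k)$ together with $Q_{p,k}=Q_{p^*,k}$ (the latter because $|\tfrac1p-\tfrac12|=|\tfrac1{p^*}-\tfrac12|$). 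The range $(N+1)/2<k\leq N+1$ is disposed of once and for all by the Hodge star, an isometry of $L^p(\Lambda^k(\mathbb H^{N+1}))$ onto $L^p(\Lambda^{N+1-k}(\mathbb H^{N+1}))$ that commutes with $\Delta$, so $\sigma(p,k)=\sigma(p,N+1-k)$. Thus the whole statement reduces to proving $\sigma(p,k)\subseteq Q_{p,k}$ for $0\leq k\leq N/2$ and $\sigma(p,(N+1)/2)\subseteq Q_{p,(N+1)/2}\cup\{0\}$ for $N$ odd.

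\textbf{The range $0\leq k\leq N/2$.} Following the narrative set up before the Proposition, I will compute the $L^1$-spectrum, recall the $L^2$-spectrum, and interpolate. By Proposition~\ref{HypThm2}, $\lambda_1=\inf\sigma(2,k)=(\tfrac N2-k)^2$, so with $c=(\tfrac N2-k)^2$ the operator $H=\Delta-c$ is nonnegative on $L^2$, and the exponential volume growth rate of $\mathbb H^{N+1}$ is $\gamma=N$. A point $w\notin Q_{p,k}$ is exactly a point of the form $w=(\tfrac N2-k)^2+z^2$ with $|\mathrm{Im}\,z|>N|\tfrac1p-\tfrac12|$ (the map $z\mapsto(\tfrac N2-k)^2+z^2$ is onto $\mathbb C$ and its two preimages of $w$ share the value $|\mathrm{Im}\,z|$). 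For such $w$, Lemma~\ref{lem7} gives $L^1$-boundedness of $(\Delta-w)^{-1}=(H-z^2)^{-1}$ when $p=1$, Proposition~\ref{HypThm2} gives $L^2$-boundedness when $p=2$, Stein interpolation (Lemma~\ref{SInt}) applied to the analytic family $w\mapsto(\Delta-w)^{-1}$ exactly as in the proof of Theorem~\ref{thm3} gives it for $1<p<2$, and duality handles $p>2$; equivalently, one may simply invoke Theorem~\ref{thm3} directly with the above $c$ and $\gamma$. Hence $w\notin\sigma(p,k)$, so $\sigma(p,k)\subseteq Q_{p,k}$, and combined with the first paragraph, $\sigma(p,k)=Q_{p,k}$ for all $1\leq p\leq\infty$ in this range.

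\textbf{The middle degree.} When $k=(N+1)/2$ with $N$ odd, Proposition~\ref{HypThm2} only gives $\inf\sigma(2,k)=0$, so Theorem~\ref{thm3} with $c=0$ merely places $\sigma(p,k)$ inside the parabolic region with vertex at the origin and does not see the gap between $0$ and $\tfrac14$; this is the delicate point. Instead I will use Proposition~\ref{kspec}: since the Ricci curvature and all Weitzenb\"ock tensors of $\mathbb H^{N+1}$ are bounded below and $(N+1)/2\leq N$, it gives
\[
\sigma(p,\tfrac{N+1}{2})\setminus\{0\}\subseteq\sigma(p,\tfrac{N-1}{2})\cup\sigma(p,\tfrac{N+3}{2}).
\]
By the previous paragraph $\sigma(p,\tfrac{N-1}{2})=Q_{p,(N-1)/2}$, and by Hodge duality $\sigma(p,\tfrac{N+3}{2})=\sigma(p,N+1-\tfrac{N+3}{2})=\sigma(p,\tfrac{N-1}{2})$. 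Since $(\tfrac N2-\tfrac{N-1}{2})^2=\tfrac14=(\tfrac N2-\tfrac{N+1}{2})^2$ and the $|\mathrm{Im}\,z|$ constraint is the same, $Q_{p,(N-1)/2}=Q_{p,(N+1)/2}$, so $\sigma(p,\tfrac{N+1}{2})\setminus\{0\}\subseteq Q_{p,(N+1)/2}$. Together with $Q_{p,(N+1)/2}\cup\{0\}\subseteq\sigma(p,\tfrac{N+1}{2})$ from the first paragraph, this yields $\sigma(p,\tfrac{N+1}{2})=Q_{p,(N+1)/2}\cup\{0\}$. The same argument applies verbatim to the essential spectrum, using the essential-spectrum versions of Propositions~\ref{HypThm1} and~\ref{kspec} and $\sigma(2,k)=\sigma_{\mathrm{ess}}(2,k)$; the finer question of which points of $Q_{p,k}$ are genuine $L^p$-eigenvalues is deferred to Section~\ref{S7}.

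\textbf{Main obstacle.} The one genuinely delicate step is the spectral gap in the middle degree, because the general resolvent estimate of Theorem~\ref{thm3} is anchored at $\lambda_1=0$ there and is therefore too coarse to separate $\{0\}$ from the parabola with vertex $\tfrac14$; the remedy is the three-term inclusion of Proposition~\ref{kspec}, which pushes the problem to degree $(N-1)/2$ where the answer is known and, crucially, carries the \emph{same} parabola. A secondary point requiring care is bookkeeping for small $N$ (e.g.\ $N=1$, where $(N+1)/2=N=n-1$) so that the hypothesis $k\leq n-1$ of Proposition~\ref{kspec} and the index ranges in the Hodge-duality reductions remain valid.
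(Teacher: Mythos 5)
Your proposal is correct and follows essentially the same route as the paper: Proposition~\ref{HypThm1} for the inclusion $Q_{p,k}\subseteq\sigma(p,k)$, then Lemma~\ref{lem7} (i.e.\ Theorem~\ref{thm3} with $c=\lambda_1=(\tfrac N2-k)^2$ and $\gamma=N$), Donnelly's $L^2$ result, Stein interpolation and duality for the reverse inclusion when $0\leq k\leq N/2$, Hodge duality for $k>(N+1)/2$, and Proposition~\ref{kspec} together with the coincidence $Q_{p,(N-1)/2}=Q_{p,(N+1)/2}$ for the middle degree (which the paper isolates as Theorem~\ref{thm51}). No substantive differences.
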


\begin{proof}
Let $0\leq k \leq N/2$.  We consider the operator $H=\Delta - \lambda_1$ with $\lambda_1=(\frac N2-k)^2$ since $\sigma (2,k)=[(\frac N2-k)^2,\infty).$ Hence, $(H-z^2)^{-1}$ is a bounded operator on $L^2$  for $|{\rm Im}\, z|>0$, and by Lemma \ref{lem7} it is bounded on $L^1$ for $|{\rm Im} \,z|>N/2.$ The same argument using Stein interpolation as in the proof of Theorem \ref{thm41}, with $\gamma=N$, gives us that $(H-z^2)^{-1}=(\Delta-(\frac N2-k)^2-z^2)^{-1}$ is bounded on $L^p$ whenever $|{\rm Im}\, z|> N\, \left(\frac 1p - \frac 12\right)$. It is in other words bounded on the complement of the region $Q_{p,k}$. Using Proposition \ref{HypThm1} we get that $\sigma(p,k)=Q_{p,k}$ for $0\leq k \leq N/2$. The case for $k>(N+1)/2$ follows by Poincar\'e duality.
 \end{proof}

We will now address the case of  $N$ odd and $k = (N+1)/2$. This is the more subtle of the cases and requires the analytical arguments or Section \ref{S0} to prove the existence of an isolated point in the $L^p$-spectrum. Recall from Proposition \ref{HypThm1} that  $\{0\}$ belongs to the essential spectrum  $\sigma_\textup{ess}(p,(N+1)/2)$ . On the other hand, since $\{0\}$ is a  point  outside  the parabola $Q_{p, (N+1)/2} $  {for $\frac{2N}{N+1}<p<\frac{2N}{N-1}$,  Stein Interpolation cannot be used in the computation of the spectrum. Instead, our argument relies on  Proposition \ref{kspec}  and observation \eqref{Qhalf-2} below.
\begin{thm} \label{thm51}
Let $1\leq p \leq 2$.
In the case of $N$ odd,   the $L^p$-spectrum (and in consequence the essential spectrum) of the Laplacian on $(N+1)/2$-forms over the hyperbolic space $\mathbb{H}^{N+1}$  is ${\displaystyle \sigma(p,(N+1)/2) = Q_{p,(N+1)/2} \cup \{0\} }$.

For $p\geq 2$  the spectrum is given by duality as in Theorem \ref{thm1}.
\end{thm}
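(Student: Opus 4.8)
The plan is to combine three ingredients: the already-established fact (Proposition~\ref{HypThm1}) that $Q_{p,(N+1)/2}\cup\{0\}\subseteq \sigma(p,(N+1)/2)$, the containment result for consecutive orders (Proposition~\ref{kspec}), and the known spectra for the neighboring orders $k=(N-1)/2$ and $k=(N+3)/2$ coming from Proposition~\ref{thm41}. So the only thing left to prove is the reverse inclusion
\[
\sigma(p,(N+1)/2)\subseteq Q_{p,(N+1)/2}\cup\{0\},
\]
i.e.\ that every $\lambda\notin Q_{p,(N+1)/2}\cup\{0\}$ lies in the resolvent set. Fix such a $\lambda$; in particular $\lambda\neq 0$. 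By Proposition~\ref{kspec} (applied with $k=(N+1)/2$, whose neighboring Weitzenb\"ock tensors on $\mathbb H^{N+1}$ are certainly bounded below),
\[
\sigma(p,\tfrac{N+1}{2})\setminus\{0\}\subseteq \sigma\!\left(p,\tfrac{N-1}{2}\right)\cup\sigma\!\left(p,\tfrac{N+3}{2}\right).
\]
By Poincar\'e duality $\sigma(p,\tfrac{N+3}{2})=\sigma(p,\tfrac{N-1}{2})$, so it suffices to show $\lambda\notin\sigma(p,\tfrac{N-1}{2})$. Since $(N-1)/2\le N/2$, Proposition~\ref{thm41} gives $\sigma(p,\tfrac{N-1}{2})=Q_{p,(N-1)/2}$, and one checks directly from the definition \eqref{m-1-1} that $Q_{p,(N-1)/2}\subseteq Q_{p,(N+1)/2}$: indeed $(\tfrac N2-\tfrac{N-1}{2})^2=\tfrac14 = (\tfrac N2-\tfrac{N+1}{2})^2$, so the two parabolas $Q_{p,(N-1)/2}$ and $Q_{p,(N+1)/2}$ have the same vertex and the same horizontal opening (both governed by the same admissible range $|\mathrm{Im}\,z|\le |N/p-N/2|$), hence coincide. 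This is the content of observation~\eqref{Qhalf-2}. Therefore $\lambda\notin Q_{p,(N+1)/2}$ forces $\lambda\notin Q_{p,(N-1)/2}=\sigma(p,\tfrac{N-1}{2})$, and combined with $\lambda\neq 0$ and the containment above we get $\lambda\notin\sigma(p,\tfrac{N+1}{2})$, as desired.

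Putting the two inclusions together yields $\sigma(p,(N+1)/2)=Q_{p,(N+1)/2}\cup\{0\}$ for $1\le p\le 2$. The essential-spectrum statement then follows because $\mathbb H^{N+1}$ is homogeneous: the non-essential part of the spectrum over a homogeneous space is empty (any eigenvalue would have infinite multiplicity by translating an eigenform, hence lies in $\sigma_{\mathrm{ess}}$), so $\sigma(p,(N+1)/2)=\sigma_{\mathrm{ess}}(p,(N+1)/2)$. Finally, for $p\ge 2$ we invoke the duality $\sigma(p,k)=\sigma(p^*,k)$ with $1/p+1/p^*=1$ established in Section~\ref{S0}, which transfers the result from the range $1\le p^*\le 2$ already handled.

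The main obstacle — and the reason this case is singled out from Proposition~\ref{thm41} — is that the point $\{0\}$ lies \emph{outside} $Q_{p,(N+1)/2}$ for $\tfrac{2N}{N+1}<p<\tfrac{2N}{N-1}$, so the clean Stein-interpolation argument that produced the $L^p$-spectrum for $k\le N/2$ cannot locate $\{0\}$: interpolation between the $L^1$ and $L^2$ resolvent bounds would wrongly suggest $0\in\rho(p,\tfrac{N+1}{2})$. The resolution is precisely that one must \emph{not} try to prove $0$ is in the resolvent set (it is not — Proposition~\ref{HypThm1} shows $0\in\sigma_{\mathrm{ess}}$), and instead isolate it: the analytic machinery of Proposition~\ref{kspec} only controls $\sigma(p,k)\setminus\{0\}$, which is exactly what is needed, and the harmonic-form construction \eqref{halfEF} supplies the matching lower bound showing $0$ genuinely belongs to the spectrum. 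The one computational point to verify carefully is the equality $Q_{p,(N-1)/2}=Q_{p,(N+1)/2}$ (observation~\eqref{Qhalf-2}), which hinges on the symmetry $|N/2-k|$ taking the same value $1/2$ at $k=(N\pm1)/2$.
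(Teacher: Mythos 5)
Your proposal is correct and follows essentially the same route as the paper: the lower bound from Proposition~\ref{HypThm1}, the reduction via Proposition~\ref{kspec} and Poincar\'e duality to the order $(N-1)/2$, and the coincidence $Q_{p,(N-1)/2}=Q_{p,(N+1)/2}$ of observation~\eqref{Qhalf-2} to close the argument. The verification that both parabolas share the vertex $1/4$ and the final duality step for $p\geq 2$ match the paper's treatment.
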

\begin{proof}
We saw that $\{0\}$ belongs to $\sigma_\textup{ess}(p,(N+1)/2)$ in  Proposition \ref{HypThm1}.  We now make the following key observation. In the case that $N$ is odd and for any $p$
\begin{equation}\label{Qhalf-2}
P_{p, (N+1)/2} = P_{p, (N-1)/2} = P_{p, (N+3)/2} = \left\{- \left(\tfrac Np -\tfrac N2 + is\right)^2  + \frac 14 \;\Big|\, s\in \mathbb{R}\, \right\}.
\end{equation}
(Note that all three parabolas cross the $x$-axis at $1/4$ when $p=2$.)

 Moreover, since the orders $(N-1)/2$ and $(N+3)/2$ for forms are dual in this case, then  $\sigma (p,(N-1)/2)=\sigma (p,(N+3)/2)$, and these spectra consist of all points to the right of the parabola  $P_{p, (N-1)/2} = P_{p, (N+3)/2}$   by what we have proved so far.

By applying Proposition \ref{kspec} for the case $k=(N+1)/2$, and using the fact that $\sigma  (p,(N-1)/2) = \sigma (p,(N+3)/2)$,  we get
\[
\sigma(p,(N+1)/2) \setminus\{0\} \subset \sigma (p,(N-1)/2)
\]
for all $p$. By Proposition \ref{HypThm1} and \eref{Qhalf-2}, the boundary of the set in the right side is contained in the set on the left side, and the two sets must be equal.
\end{proof}

\begin{proof}[Proof of first part of Theorem \ref{thm1}]
The first part of Theorem  \ref{thm1} now follows from Propositions \ref{HypThm1}, \ref{thm41} and Theorem \ref{thm51}.
\end{proof}

\section{Eigenvalues in the $L^p$-spectrum of $\mathbb{H}^{N+1}$} \label{S7}

Our main goal in this section is to prove that any point in the $L^p$-spectrum of the $k$-form Laplacian over $\mathbb{H}^{N+1}$ is in fact an eigenvalue if and only if $p>2$,  and to study the existence of harmonic $k$-forms when $k$ is half the dimension of the manifold.
We use the spherically symmetric model for the hyperbolic space $\mathbb H^{N+1}=[0,\infty)\times S^{N}$ with metric
\[
ds^2=dr^2+ f^2(r) d\omega^2,\qquad \text{with} \ \ f(r)=\sinh r
\]
and where $d\omega^2$ is the standard metric on the unit sphere $S^N$.

Using equation (1.3) from Donnelly \cite{Don2}  we can simplify the action of the Laplacian on certain types of forms.
\begin{lem} Let $\eta$ be a $k$-form over $S^{N}$. For any smooth $k$-form of the type $\omega=\phi(r)\,\eta$ we have  
\begin{equation*}
\begin{split}
\Delta \omega
&= -(\phi''  + (N-2k) \phi' \, f' \, f^{-1})\,\eta  -2 \phi \, f' \,f^{-3} \, dr \wedge  \delta_S \eta +  \phi f^{-2} \, \Delta_S\eta
\end{split}
\end{equation*}
 where $\delta_S$ and $\Delta_S$ are the adjoint to the exterior derivative and the Laplacian on $S^N$, respectively.  \end{lem}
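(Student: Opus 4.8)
The plan is to work directly from the warped product structure $ds^2 = dr^2 + f^2(r)\,d\omega^2$ and the formula $\Delta = d\delta + \delta d$, specializing Donnelly's computation \cite{Don2}*{(1.3)} to forms of the separated type $\omega = \phi(r)\,\eta$, with $\eta$ pulled back from $S^N$. First I would record the elementary facts about this pullback: since each dual $1$-form of the sphere factor is rescaled by $f^{-1}$, one has $|\eta|^2 = f^{-2k}|\eta|_S^2$, and the Hodge stars are related by $*(\phi\,\eta) = (-1)^k\,\phi\, f^{N-2k}\, dr\wedge *_S\eta$ and $*(dr\wedge\beta) = f^{\,N-2j}\,*_S\beta$ for a $j$-form $\beta$ on $S^N$. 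From these I would extract two consequences: $(i)$ $d\omega = \phi'\, dr\wedge\eta + \phi\, d_S\eta$, since $d$ commutes with the pullback and $\phi$ depends only on $r$; and $(ii)$ for any form $\beta$ on $S^N$ (with no $dr$-component), $\delta(\phi\,\beta) = \phi\, f^{-2}\,\delta_S\beta$, because applying $\delta = \pm * d *$ produces one compensating factor $f^{\,N-2j}\cdot f^{\,2j-N-2} = f^{-2}$ and no $r$-derivative survives.

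The genuinely new ingredient is the codifferential of a form containing $dr$. Applying $\delta = \pm * d *$ to $\psi(r)\,dr\wedge\eta$ and using $\bigl(\psi f^{N-2k}\bigr)'\, f^{\,2k-N} = \psi' + (N-2k)\,\psi\, f'f^{-1}$ together with the $f^{-2}$ computation from $(ii)$, one obtains, after fixing the overall sign by Donnelly's conventions,
\[
\delta\bigl(\psi\, dr\wedge\eta\bigr) = -\bigl(\psi' + (N-2k)\,\psi\, f'f^{-1}\bigr)\,\eta \;-\; \psi\, f^{-2}\, dr\wedge\delta_S\eta .
\]
Taking $\psi = \phi'$ gives one piece of $\delta\,d\omega$, while $\delta(\phi\, d_S\eta) = \phi\, f^{-2}\,\delta_S d_S\eta$ by $(ii)$; on the other side, $\delta\omega = \phi\, f^{-2}\,\delta_S\eta$ by $(ii)$, hence $d\delta\omega = (\phi f^{-2})'\, dr\wedge\delta_S\eta + \phi\, f^{-2}\, d_S\delta_S\eta$. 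Summing $\Delta\omega = \delta\,d\omega + d\delta\omega$: the angular terms combine through $\delta_S d_S + d_S\delta_S = \Delta_S$ into $\phi\, f^{-2}\Delta_S\eta$; the $\eta$-terms give $-\bigl(\phi'' + (N-2k)\,\phi'\, f'f^{-1}\bigr)\eta$; and the coefficient of $dr\wedge\delta_S\eta$ is $-\phi'f^{-2} + (\phi f^{-2})' = -2\phi\, f'f^{-3}$, which is exactly the asserted identity (valid for $r>0$; the behaviour at $r=0$ is forced by the smoothness of $\omega$).

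The only real obstacle is the bookkeeping: pinning down the Hodge-star sign conventions on the warped product so that the three groups of terms assemble with precisely the stated coefficients — in particular checking that the warping-derivative term in $\delta(dr\wedge\eta)$ carries the factor $N-2k$ and not $N-k$, and that the mixed term collapses to $-2\phi f'f^{-3}$. Everything else is routine warped-product calculus, which is exactly what \cite{Don2}*{(1.3)} packages; alternatively, one can sidestep the signs by decomposing $\eta$ via the Hodge decomposition on $S^N$ into $\Delta_S$-eigenforms and matching term by term against Donnelly's formulas, the general case following by linearity.
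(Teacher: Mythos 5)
Your proposal is correct: I checked the warped-product Hodge-star identities you state ($*(\phi\,\eta)=(-1)^k\phi f^{N-2k}dr\wedge *_S\eta$, $*(dr\wedge\beta)=f^{N-2j}*_S\beta$), the two codifferential formulas $\delta(\phi\,\beta)=\phi f^{-2}\delta_S\beta$ and $\delta(\psi\,dr\wedge\eta)=-(\psi'+(N-2k)\psi f'f^{-1})\eta-\psi f^{-2}dr\wedge\delta_S\eta$ (the signs do come out as you claim once the $\pm$ in $\delta=\pm *d*$ is tracked on both the $(N+1)$-manifold and on $S^N$), and the final assembly $-\phi'f^{-2}+(\phi f^{-2})'=-2\phi f'f^{-3}$; everything matches the stated identity. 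The paper itself offers no proof of this lemma --- it simply specializes Donnelly's general formula (1.3) for the Laplacian on a warped product --- so your route is the more self-contained one: you rederive from scratch exactly the computation that Donnelly's (1.3) packages, at the cost of the sign bookkeeping you flag, while the paper's approach buys brevity by outsourcing that bookkeeping to the reference. Your fallback of decomposing $\eta$ into $\Delta_S$-eigenforms and matching against Donnelly term by term is essentially what the paper does implicitly, so either reading closes the argument.
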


We now assume that the $k$-form $\eta$ is a co-closed eigenform on the sphere such that $\delta_S\eta_1=0$ and $\Delta_S\eta=\lambda\eta$.  Then from the  above lemma, we get
\[
\Delta (\phi  \eta) = \Delta_1(\phi )  \eta,
\]
where $\Delta_1$ is the operator on functions in one variable given by
\begin{equation}\label{S7eq2}
\Delta_1 \phi= -(\phi''  + (N-2k) \phi' \, f' \, f^{-1})   + \lambda \,  f^{-2}\,\phi \,.
\end{equation}


We will begin by completing the proof of Theorem \ref{thm1}.

\begin{proof}[Proof of second part of Theorem \ref{thm1}]
Let $\Lambda=x+iy\in Q_{p,k}$. We try to find a suitable (complex-valued) function $\phi=\phi(r)$ such that $\phi\eta$ is an eigenform corresponding to the eigenvalue $\Lambda$. By  \eqref{S7eq2} we need to solve the equation.
\begin{equation}\label{ode-2}
\phi''  + (N-2k) \phi' \, f' \, f^{-1}   - \lambda \,  f^{-2}\,\phi +\Lambda\phi=0.
\end{equation}

The above equation has a regular-singular point at $r=0$.  To study the behavior of the solution near $r=0$ we first observe that $f\sim r$ and $f'\sim 1$ for $r$ small, hence for these values the characteristic equation  corresponding to \eqref{ode-2} is given by
\[
\alpha(\alpha-1)+(N-2k)\alpha -\lambda=0.
\]
 This equation has a solution of order $\phi\sim r^\alpha$ with
\[
\alpha=\left(-(N-2k-1)+\sqrt{(N-2k-1)^2+4\lambda}\right)/2.
\]

At the same time, by  the results of Gallot and Meyer \cite{GalMey75} we know that   the eigenvalues of $S^N$ for closed $k$-forms  are given by $\lambda^c=(s+k)(s+N-k+1)$ where $s$ is any nonnegative integer. Hence by duality we get that all eigenvalues for co-closed  $k$-forms are of the type
\[
\lambda = (N-k +s)(s+k+1) \quad \text{with} \quad s\geq 0.
\]
A simple computation for the above $\lambda$ yields $\alpha=k+1+s\geq k+1$. Thus $\phi \eta$ is bounded near the origin. By elliptic regularity, $\phi \eta$  is therefore smooth.

As $r\to\infty$, Equation~\eqref{ode-2} is asymptotic to
\[
\tilde \phi''+(N-2k)\tilde \phi'+\Lambda\tilde \phi=0.
\]
 Let $m=(N-2k)/2$. The above differential equation has two linearly independent solutions, $e^{-m
\pm \lambda_o r}$ where
 \begin{equation}\label{S7eq7}
\begin{split}
&
\lambda_o = \sqrt{m^2 -x-iy} = a + i b\quad \text{with} \,\,
 a>0, \quad \text{and}\\
 & a^2= \frac 12 \left[m^2-x + \sqrt{(m^2-x)^2+y^2} \right].
 \end{split}
\end{equation}
By \cite{Har}*{Theorem 9.1,  p.379}  and the remark that addresses the complex case we get that $\phi$  has growth of order  $e^{ (-m+a)r}$,  at most.

To check that  $\phi(r) \eta $ is $L^p$ integrable it therefore suffices to check its integrability at infinity:
\[
\|h \eta\|_p^p \leq C \int_1^\infty |h\eta|^p f^{N} dr  \leq C \int_1^\infty e^{(-m+a)p r} f^{-kp+N} dr.
\]
For the right side to be bounded it suffices to have
\begin{align} \label{S7eq6}
  ap& -mp-kp+N<0 \Leftrightarrow a<  N  \left(\tfrac 12 -\tfrac{1}{p}\right) \notag \\
  &\Leftrightarrow  m^2-x + \sqrt{(m^2-x)^2+y^2}  <   2 N^2 \left(\tfrac 12 -\tfrac 1p\right)^2 \ \ \text{and} \ \   p>2 \notag \\
  &\Leftrightarrow y^2  < 4 \,\left[x+ N^2 \left(\tfrac 12 -\tfrac 1p\right)^2 - (\tfrac N2 - k)^2\right] N^2(\tfrac 12-\tfrac 1p)^2
\end{align}
after substituting for $m$. It is a straightforward exercise to show that all points $x+iy$ in the complex plane that lie in  the parabolic region $Q_{p,k}$   are precisely the set of points that satisfy inequality \eref{S7eq6}.

For $1\leq p \leq 2$, suppose that $\lambda$ is an eigenvalue in the $L^p$ spectrum with corresponding eigenform $\omega$. By \cite{Char2}*{Theorem 4.2} the heat operator of the Laplacian on $k$-forms is ultracontractive over the hyperbolic space, since the volume of balls of radius 1 is uniformly bounded below and it has constant curvature. By duality the heat operator is also bounded from $L^1$ to $L^2$, and using interpolation it is also bounded from $L^p$ to $L^q$ for any $1\leq p\leq q\leq \infty$.  Since $e^{-t\Delta} \omega = e^{-t\lambda} \omega$, it then follows that $\omega \in L^q$ for all $q\geq 2$, and in particular $\lambda$ must also be an $L^2$-eigenvalue. Since Donnelly \cite{Don2}  has proved that other than $\{0\}$ there are no other eigenvalues in the $L^2$ spectrum, we have a contradiction.
\end{proof}

Note that a similar result was proved by Taylor for the Laplacian on functions over symmetric spaces \cite{tay}.

 Our final goal is to prove Theorem \ref{thm62} about the existence of harmonic $(N+1)/2$-forms when $N$ is odd.


\begin{proof}[Proof of Theorem \ref{thm62}]

We let  $\{(\lambda_k, \eta_k)\}$ be a pair of eigenvalue and   co-closed $(N-1)/2$-eigenform of  $S^N$  with respect to the standard metric of the sphere, such that $\{\eta_k\}$ is a complete orthonormal basis  for the $L^2$-integrable co-closed $(N-1)/2$-eigenforms of  $S^N$. Note that $\lambda_k>0$ for all $k$. Donnelly~\cite{Don2} proved that any  harmonic $L^2$ form of order $(N+1)/{2}$ must be of the type
\begin{equation}\label{eta1}
\omega=\sum_{k=1}^\infty a_k \omega_k+\sum_{k=1}^\infty b_k \omega_k' ,
\end{equation}
with $a_k, b_k$  constants, and $\omega_k, \omega_k'$ defined by
\[
\begin{split}
&
\omega_k=\frac{1}{\sqrt \lambda_k} f(r)^{1/2} w_k(r) d_S\eta_k+(-1)^{(N+1)/2} (f(r))^{-1/2} w_k(r) \eta_k\wedge dr,\\
& \omega_k'=*_S \, \omega_k,
\end{split}
\]
where $d_S$ and $*_S$ are the differential  and star operators on $S^N$, respectively; and $w_k(r)$ is defined as
\[
w_k(r)=\frac{(\tanh r/2)^{\sqrt\lambda_k-1/2}}{\cosh r/2}.
\]

Now assume that $p\leq 2N/(N+1)$.  Let $\omega$ be an $L^p$ harmonic $(N+1)/2$-form.  Recall from the proof of Theorem \ref{thm1} earlier in this section that the heat operator is bounded from $L^p$ to $L^q$ for all $q\geq p$ over the hyperbolic space.   Since the heat operator preserves harmonic forms, $e^{-t \Delta} \omega = \omega$, we have that  $\omega$ must also belong to $L^2$ and hence must be of the type ~\eqref{eta1}.

We identify $\mathbb H^{N+1}\backslash \{0\}$ to $\R^{N+1}\backslash \{0\}$ and then to $\R^+\times S^N$. Let $\langle\,,\,\rangle_0$ and $|\cdot|_0$ be the pointwise inner product with respect to the product metric of $\R^+\times S^N$.
 Assume that the coefficient $a_\ell$ in $\omega$ is not zero. We consider
\begin{equation} \label{6_e1}
\left|\int_{[1,\infty)\times S^N}\langle d_S\eta_\ell,\omega\rangle_0\,\frac{1}{1+r} dm\right|\leq C \int_{[1,\infty)\times S^N}|\omega|_0\,\frac{1}{1+r}\, dm,
\end{equation}
where $dm=dr\wedge dV_{S^N}$, and $ dV_{S^N}$ denotes the standard measure on the sphere of radius 1.
Using H\"older's inequality, we have
\begin{align*}
&\qquad
\int_{[1,\infty)\times S^N}|\omega|_0\,\frac{1}{1+r}\, dm\\
&\leq
\left(\int_{[1,\infty)\times S^N}|\omega|^p_0\, f^{-\frac{N+1}{2}p+N}\, dm\right)^{1/p}\cdot
\left(\int_{[1,\infty)\times S^N}f^{(\frac{N+1}{2}-\frac Np)q}\,\frac{1}{(1+r)^q}dm\right)^{1/q}\\
&\leq C\|\omega\|_{L^p(\mathbb H^{N+1})}\cdot \left(\int_{[1,\infty)\times S^N}\frac{1}{(1+r)^q}\,dm\right)^{1/q}<\infty,
\end{align*}
where $q=p/(p-1)$ is the conjugate of $p$.

However, the left side in \eqref{6_e1} is equal to
\[
a_\ell  \sqrt{2 \lambda_\ell} \int_{1}^\infty \left(\tanh\frac r2\right)^{\sqrt{\lambda_\ell}}\,\frac{1}{1+r} dr=\infty
\]
unless $a_\ell=0$ for all positive integers $\ell$. Similarly, $b_\ell=0$ for all positive integers $\ell$. This is a contradiction.

On the other hand, if $p>2N/(N+1)$ then for any fixed $k$, the form $\omega_k$ defined above is $L^p$-integrable by a straighforward computation. This completes the proof of the theorem.

\end{proof}

\appendix

\section{Stein Interpolation}

We state a version of the Stein Interpolation Theorem that we  use thoughout the paper. A more general version can be found in Davies~\cite{Davies}*{Section 1.1.6} and Stein and Weiss~\cite{StWe}. For a preliminary introduction we refer to the book of Reed and Simon~\cite{ReSi}.

Denote by $\|A\|_{p,q}$ the norm of a linear operator $A: L^{p} \rightarrow L^q $ and by $p^*$ the real number such that $1/p+ 1/p^*=1$.

\begin{lem}[The Stein Interpolation Theorem] \label{SInt} Let $1\leq p_0, p_1\leq \infty$ and $S=\{0\leq \textup{Re}\,z \leq 1\}$.
Suppose that  for all $z\in S$,  $T(z)$ is linear operator from $L^{p_0}(\Lambda^k(M))\cap L^{p_1}(\Lambda^k(M))$ to $L^{p_0}(\Lambda^k(M))+L^{p_1}(\Lambda^k(M)).$ Furthermore, assume that

\begin{enumerate}
\item $\<T(z) \omega, \eta\>$ is uniformly bounded and continuous on $S$ and analytic in the interior of $S$ whenever $\omega \in L^{p_0}(\Lambda^k(M))\cap L^{p_1}(\Lambda^k(M))$ and $ \eta \in L^{p_0^*}(\Lambda^k(M))\cap L^{p_1^*}(\Lambda^k(M))$.

\item For all $y\in \mathbb{R}$
\[
\|T(iy) \omega \|_{p_0} \leq  M_0 \, \|\omega \|_{p_0}
\]
for all $\omega \in L^{p_0}(\Lambda^k(M))\cap L^{p_1}(\Lambda^k(M))$.

\item  For all $y\in \mathbb{R}$
\[
\|T(1+iy) \omega \|_{p_1} \leq  M_1 \, \|\omega \|_{p_1}
\]
for all $\omega \in L^{p_0}(\Lambda^k(M))\cap L^{p_1}(\Lambda^k(M))$.
\end{enumerate}

Then for each $t\in (0,1)$  and $\omega \in L^{p_0}(\Lambda^k(M))\cap L^{p_1}(\Lambda^k(M))$
\[
\|T(t)\omega \|_{p_t} \leq M_0^{1-t}\, M_1^{t} \,\|\omega \|_{p_t}
\]
where $1/p_t = t/p_1 + (1-t)/p_0$.  Hence $T(t)$ can be extended to a bounded operator on $L^{p_t}(\Lambda^k(M))$ with norm at most $ M_0^{1-t}\, M_1^{t}$.
\end{lem}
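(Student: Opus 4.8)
The plan is to reduce Lemma~\ref{SInt} to the Hadamard three-lines theorem applied to a scalar holomorphic function built out of $T(z)$, following the classical argument of Stein and Weiss. Fix $t\in(0,1)$ and $\omega\in L^{p_0}(\Lambda^k(M))\cap L^{p_1}(\Lambda^k(M))$; by homogeneity we may assume $\|\omega\|_{p_t}=1$. Since $L^{p_0^*}(\Lambda^k(M))\cap L^{p_1^*}(\Lambda^k(M))$ is dense in the unit ball of $L^{p_t^*}(\Lambda^k(M))$ (in the appropriate sense), it suffices to bound $|\langle T(t)\omega,\eta\rangle|$ by $M_0^{1-t}M_1^{t}$ for every such $\eta$ with $\|\eta\|_{p_t^*}=1$, and then take a supremum over $\eta$.

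For $z\in S$ set $\frac{1}{p(z)}=\frac{1-z}{p_0}+\frac{z}{p_1}$ and $\frac{1}{q(z)}=\frac{1-z}{p_0^*}+\frac{z}{p_1^*}$, so that $p(t)=p_t$ and $q(t)=p_t^*$. Writing $\omega=|\omega|\,u$ and $\eta=|\eta|\,v$ with $u,v$ unit sections (defined where $\omega$, resp.\ $\eta$, is nonzero), I would introduce the analytic families
\[
\omega_z=|\omega|^{\,p_t/p(z)}\,u,\qquad \eta_z=|\eta|^{\,p_t^*/q(z)}\,v,
\]
which satisfy $\omega_t=\omega$, $\eta_t=\eta$, and whose pointwise norms $|\omega_z|=|\omega|^{{\rm Re}(p_t/p(z))}$, $|\eta_z|=|\eta|^{{\rm Re}(p_t^*/q(z))}$ interpolate between the integrable powers $|\omega|^{p_t/p_0},|\omega|^{p_t/p_1}$ and $|\eta|^{p_t^*/p_0^*},|\eta|^{p_t^*/p_1^*}$; hence $\omega_z\in L^{p_0}\cap L^{p_1}$ and $\eta_z\in L^{p_0^*}\cap L^{p_1^*}$ for all $z\in S$. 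Define
\[
F(z)=\langle T(z)\,\omega_z,\ \eta_z\rangle .
\]
I would first establish the estimate when $|\omega|$ and $|\eta|$ are simple functions (finite combinations of characteristic functions of sets of finite measure), in which case $\omega_z$ and $\eta_z$ are finite sums $\sum_j a_j^{\,p_t/p(z)}\chi_{E_j}u$ and $\sum_{j'} b_{j'}^{\,p_t^*/q(z)}\chi_{E_{j'}'}v$ with coefficients that are bounded holomorphic scalars on $S$ (their exponents have bounded real part there). Then $F$ is a finite sum of such scalars times the quantities $\langle T(z)(\chi_E u),\chi_{E'}v\rangle$, which by hypothesis~(1) are uniformly bounded and continuous on $S$ and analytic in its interior; hence $F$ itself is bounded and continuous on $S$ and analytic in the interior.

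Next I evaluate $F$ on the two bounding lines. On ${\rm Re}\,z=0$ one has $|\omega_{iy}|=|\omega|^{p_t/p_0}$, so $\|\omega_{iy}\|_{p_0}^{p_0}=\|\omega\|_{p_t}^{p_t}=1$ and likewise $\|\eta_{iy}\|_{p_0^*}=1$; hypothesis~(2) together with H\"older's inequality then gives $|F(iy)|\le\|T(iy)\omega_{iy}\|_{p_0}\,\|\eta_{iy}\|_{p_0^*}\le M_0$. Symmetrically, hypothesis~(3) gives $|F(1+iy)|\le M_1$ on ${\rm Re}\,z=1$. By the Hadamard three-lines theorem, $|F(t)|\le M_0^{1-t}M_1^{t}$, i.e.\ $|\langle T(t)\omega,\eta\rangle|\le M_0^{1-t}M_1^{t}$ for all simple $\omega,\eta$ normalized as above. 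Taking the supremum over $\eta$, undoing the normalization, and removing the simplicity assumption on $\omega$ by density (using that $\|T(t)\omega\|_{p_t}\le M_0^{1-t}M_1^{t}\|\omega\|_{p_t}$ already holds on the dense set of simple sections) yields the claim on $L^{p_0}(\Lambda^k(M))\cap L^{p_1}(\Lambda^k(M))$; since the latter is dense in $L^{p_t}(\Lambda^k(M))$, $T(t)$ extends to a bounded operator of norm at most $M_0^{1-t}M_1^{t}$.

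The main obstacle is the bookkeeping around $F$: one must know that it is holomorphic in the open strip, continuous up to the boundary, and --- crucially for the three-lines theorem --- bounded on all of $S$. Hypothesis~(1) only furnishes such control for a \emph{fixed} pair of sections, not for the $z$-dependent family $\omega_z,\eta_z$, so the reduction to simple sections --- where the dependence of $\omega_z,\eta_z$ on $z$ enters only through finitely many elementary exponentials with bounded real part on $S$ --- is exactly what makes hypothesis~(1) applicable; once that is in place, the boundary estimates and the passage to the limit are routine.
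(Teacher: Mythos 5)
The paper does not prove this lemma at all: it is quoted as a known result, with the proof deferred to Davies, Stein--Weiss, and Reed--Simon. Your argument is precisely the classical three-lines proof found in those references --- the analytic families $\omega_z,\eta_z$ obtained by complexifying the exponents, reduction to simple sections so that hypothesis (1) yields boundedness and analyticity of $F(z)=\langle T(z)\omega_z,\eta_z\rangle$, the boundary estimates via H\"older, Hadamard's three-lines theorem, and duality --- and it is correct. The only point deserving a remark is the final passage from simple sections to general $\omega\in L^{p_0}(\Lambda^k(M))\cap L^{p_1}(\Lambda^k(M))$: knowing the bound on a dense set produces a bounded extension, and identifying that extension with $T(t)$ on the intersection requires a mild continuity observation; this is handled (or glossed over) in exactly the same way in the cited sources, so it is not a gap relative to the paper.
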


\section{Some elementary results}\label{appB}
\begin{lem}\label{blem40}
Using the assumptions on functions $f$ and $g$ in \S~\ref{S4}, we have
\[
|g^{(j)}(w^2)|\leq\frac{C'_j}{(1+|w|)^{j}}
\]
 \end{lem}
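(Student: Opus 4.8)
The plan is elementary; the one thing to watch is that the naive formula for $g^{(j)}(w^{2})$ obtained from the chain rule degenerates at $w=0$, so the bounded range of $w$ must be handled by holomorphy rather than by that formula. First I would record the chain-rule identity: differentiating the relation $g(t^{2})=f(t)$ (which holds because $f$ is even) repeatedly in $t$, an easy induction produces, for each $j\ge 1$, constants $c_{j,1},\dots,c_{j,j}$ such that
\[
g^{(j)}(t^{2})=\sum_{k=1}^{j}c_{j,k}\,t^{\,k-2j}\,f^{(k)}(t),
\]
an identity of holomorphic functions on $W\setminus\{0\}$. (The individual terms blow up as $t\to0$, but the left-hand side does not.)

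Next, for $w\in W$ with $|w|\ge 1$ and $1\le j\le n/2+2$, I would substitute the hypothesis $|f^{(k)}(w)|\le C_k(1+|w|)^{-k}\le C_k$ into this identity. Since $k\le j$ we have $k-2j\le -j$, so $|w|^{\,k-2j}\le|w|^{-j}\le 2^{\,j}(1+|w|)^{-j}$ because $|w|\ge1$; summing the finitely many terms gives $|g^{(j)}(w^{2})|\le C'_j(1+|w|)^{-j}$. Note that this already covers every $w\in W$ with $|w|\ge1$, with no shrinking of the strip.

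Finally, for the bounded range I would use holomorphy. Since $f$ is even, $u\mapsto f(\sqrt u)$ is single-valued and $g(u)=f(\sqrt u)$ is holomorphic on the open set $\Omega=\{0\}\cup\{u\neq0:\sqrt u\in W\}$ (the membership being well defined because $W$ is symmetric about $0$), and $\Omega$ contains $\{w^{2}:w\in W\}$. Fixing a closed sub-strip $W'=\{|{\rm Im}\,w|\le\beta'\}\subset W$ — the range in which the lemma is applied — the set $\{w^{2}:w\in W',\ |w|\le1\}$ is a compact subset of $\Omega$, so the holomorphic function $g^{(j)}$ is bounded on it by some $M_j$; since $(1+|w|)^{-j}\ge2^{-j}$ there, $|g^{(j)}(w^{2})|\le 2^{\,j}M_j(1+|w|)^{-j}$. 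Taking $C'_j$ to be the larger of the two constants finishes the proof (the case $j=0$ being simply $|g(w^{2})|=|f(w)|\le C_0$). There is no deep obstacle here; the only genuinely non-routine point is the remark that the evenness of $f$ makes $g$ holomorphic across $u=0$, which is exactly what repairs the singular chain-rule formula near the origin.
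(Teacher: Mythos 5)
Your proof is correct and follows essentially the same route as the paper's: the inductive chain-rule identity $g^{(j)}(t^2)=\sum_{k=1}^{j}c_{j,k}t^{k-2j}f^{(k)}(t)$, the bound $|w|^{k-2j}\leq|w|^{-j}$ for $|w|\geq 1$, and holomorphy of $g^{(j)}(w^2)$ across $w=0$ to handle the bounded range. You simply spell out in more detail (evenness of $f$, compactness of the sub-strip) what the paper compresses into the single remark that $g^{(j)}(w^2)$ is holomorphic near $w=0$.
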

 \begin{proof}
 We can prove, inductively, that
 \begin{equation}
 g^{(j)}(x)=\sum_{k=1}^j \sigma_{j,k}f^{(k)}(\sqrt{x}) x^{\frac 12 k-j}
 \end{equation}
hence
\begin{equation}
| g^{(j)}(w^2)|=|\sum_{k=1}^j \sigma_{j,k}f^{(k)}(w) w^{k-2j}|\leq C_j |w|^{-j}
 \end{equation}
 where $w$ is in the horizontal strip. Since $g^{(j)}(w^2)$ is holomorphic near $w=0$, we conclude the result of the lemma.
 \end{proof}

 \begin{lem}\label{blem41}
 Let $\sigma(w)$ be a bounded  holomorphic function defined on the strip
 \[
\{w\mid |{\rm Im}\,w|<\gamma_o/2+\eps_o\}\subset\C.
 \]
 Assume that $\sigma(w)\to 0$ as $w\to\infty$, and for any $| \tau |\leq (\gamma_o+\eps_o)/2$
 \[
 \int_\R |\sigma(x+i\tau )|dx\leq C
 \]
 for a constant $C$ independent of $\tau$.
 Then Fourier transform $\hat \sigma(\xi)$ of $\sigma(t)$ satisfies
\[
|\hat \sigma(\xi)|\leq Ce^{-(\frac{\gamma_o}{2}+\frac{\eps_o}{2}) \xi}
\]
for any real number $\xi$.
 \end{lem}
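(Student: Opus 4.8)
\noindent\textbf{Proof strategy for Lemma~\ref{blem41}.} The plan is a Paley--Wiener type contour shift: move the line of integration in the Fourier integral down (for $\xi>0$) or up (for $\xi<0$) into the strip on which $\sigma$ is holomorphic and has uniformly bounded $L^1$ norm on horizontal translates, and simply read off the exponential factor produced by the shift. Write $\hat\sigma(\xi)=\int_\R\sigma(t)\,e^{-it\xi}\,dt$ and set $\tau_0=(\gamma_o+\eps_o)/2$. Since $\eps_o>0$ we have $\tau_0<\gamma_o/2+\eps_o$, so $\sigma$ is holomorphic on an open neighborhood of the closed strip $\{\,|{\rm Im}\,w|\le\tau_0\,\}$, and by hypothesis $\int_\R|\sigma(x\pm i\tau_0)|\,dx\le C$. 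The case $\tau=0$ of the hypothesis already gives $\sigma\in L^1(\R)$, so $\hat\sigma$ is well defined and $|\hat\sigma(0)|\le C$.

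First I would treat $\xi>0$. For $R>0$ apply Cauchy's theorem to the holomorphic function $w\mapsto\sigma(w)\,e^{-iw\xi}$ on the rectangle with vertices $\pm R$ and $\pm R-i\tau_0$. On the bottom side, parametrized by $w=x-i\tau_0$, one has $e^{-iw\xi}=e^{-ix\xi}e^{-\tau_0\xi}$, so the bottom contributes $e^{-\tau_0\xi}\int_{-R}^{R}\sigma(x-i\tau_0)\,e^{-ix\xi}\,dx$. On each vertical side $w=\pm R+is$ with $s\in[-\tau_0,0]$ one has $|e^{-iw\xi}|=e^{s\xi}\le 1$ (this is the only place $\xi>0$ is used), the side has length $\tau_0$, and $\sup_{|s|\le\tau_0}|\sigma(\pm R+is)|\to 0$ as $R\to\infty$ because $\sigma(w)\to 0$ at infinity within the strip; hence the vertical contributions vanish in the limit. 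Letting $R\to\infty$ yields
\[
\hat\sigma(\xi)=e^{-\tau_0\xi}\int_\R\sigma(x-i\tau_0)\,e^{-ix\xi}\,dx,
\]
so that $|\hat\sigma(\xi)|\le e^{-\tau_0\xi}\int_\R|\sigma(x-i\tau_0)|\,dx\le C\,e^{-(\gamma_o/2+\eps_o/2)\xi}$, which is exactly the claimed bound. For $\xi<0$ I would repeat the argument shifting the contour up, using the rectangle with vertices $\pm R$ and $\pm R+i\tau_0$; the same computation gives $|\hat\sigma(\xi)|\le C\,e^{-\tau_0|\xi|}$. Combining the three cases $\xi>0$, $\xi<0$, $\xi=0$ gives $|\hat\sigma(\xi)|\le C\,e^{-(\gamma_o/2+\eps_o/2)|\xi|}$ for all real $\xi$, which in particular contains the stated inequality.

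The argument is essentially routine, and the only step that deserves care is the vanishing of the two vertical-side integrals as $R\to\infty$; this is precisely what the hypothesis ``$\sigma(w)\to 0$ as $w\to\infty$'' is there to guarantee. (If one wanted to weaken that hypothesis, it would suffice to pass to the limit along a subsequence $R_j\to\infty$ with $\sup_{|s|\le\tau_0}|\sigma(\pm R_j+is)|\to 0$, i.e.\ to assume only $\liminf_{R\to\infty}\sup_{|s|\le\tau_0}|\sigma(\pm R+is)|=0$.) Everything else is Cauchy's theorem together with the uniform $L^1$ bound on horizontal translates of $\sigma$.
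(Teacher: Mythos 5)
Your proof is correct and follows essentially the same route as the paper: a Cauchy-theorem contour shift of the Fourier integral to the line ${\rm Im}\,w=-(\gamma_o+\eps_o)/2$, using the decay of $\sigma$ at infinity to discard the vertical sides and the uniform $L^1$ bound on horizontal translates to control the shifted integral. Your version is merely more explicit about the rectangle and the vanishing of the vertical contributions, and adds the (stronger, two-sided) bound $Ce^{-\tau_0|\xi|}$ by also shifting upward for $\xi<0$.
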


 \begin{proof} By definition,
 \[
\hat \sigma(\xi)=\int_{\R} \sigma(t)e^{-i\xi t} dt.
\]
Since $\sigma$ is holomorphic  and $\sigma(w)\to 0$ as $w\to\infty$,  by  Cauchy's  integral theorem over appropriate contours, we
can write
\[
\hat \sigma(\xi)=\int_{\R} \sigma (t-  i(\frac{\gamma_o} 2+\frac{\eps_o}{2}))e^{-i\xi (t- i(\frac{\gamma_o}2+\frac{\eps_o}{2}))} dt.
\]
Thus
\[
|\hat \sigma(\xi)|\leq e^{-(\frac{\gamma_o}{2}+\frac{\eps_o}{2}) \xi} \int_{\R}\left| \sigma (t-  i(\frac{\gamma_o} 2+\frac{\eps_o}{2}))\right|dt
\]
The lemma follows from the assumption on $\sigma(z)$.
 \end{proof}

 \begin{lem}[Taylor's formula]\label{blem42} Let $\alpha> 0$ and $j$ an integer.
For $j\geq 1$  define
 \[
 b_j=\frac{\alpha^{j}}{(j-1)!}\int_0^1 g^{(j)}(x+t\alpha)\, t^{j-1} dt.
 \]
 Then for any $N>1$, we have
 \[
 g(x)=\sum_{j=0}^{N-1}(-1)^{j}\frac{\alpha^j}{j!}\, g^{(j)}(x+\alpha)+(-1)^Nb_N.
 \]
 \end{lem}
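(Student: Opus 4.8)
The plan is to prove the identity by induction on $N$, using nothing beyond the fundamental theorem of calculus and one integration by parts; in particular, no property of $g$ is needed other than that it be $C^N$ on an interval containing $x$ and $x+\alpha$, which holds in our application since $g(w)=f(\sqrt w)$ is smooth on the relevant half-line and $\alpha\gg0$ is chosen so that $x$ and $x+\alpha$ lie in its domain. I would in fact establish the formula for all integers $N\geq 1$, the case $N=1$ serving as the base, which in particular covers the stated range $N>1$.

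For the base case $N=1$, substituting $s=x+t\alpha$ in the identity $g(x)=g(x+\alpha)-\int_x^{x+\alpha}g'(s)\,ds$ gives $g(x)=g(x+\alpha)-\alpha\int_0^1 g'(x+t\alpha)\,dt=g(x+\alpha)-b_1$, which is precisely the claimed formula for $N=1$.

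For the inductive step, assume $g(x)=\sum_{j=0}^{N-1}(-1)^j\frac{\alpha^j}{j!}g^{(j)}(x+\alpha)+(-1)^N b_N$. Integrating $b_N$ by parts with $u=g^{(N)}(x+t\alpha)$ and $dv=t^{N-1}\,dt$, and noting that the boundary term at $t=0$ vanishes, yields
\[
\int_0^1 g^{(N)}(x+t\alpha)\,t^{N-1}\,dt=\frac{1}{N}\,g^{(N)}(x+\alpha)-\frac{\alpha}{N}\int_0^1 g^{(N+1)}(x+t\alpha)\,t^{N}\,dt,
\]
so that $b_N=\frac{\alpha^N}{N!}\,g^{(N)}(x+\alpha)-b_{N+1}$. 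Substituting this into the inductive hypothesis and absorbing the term $(-1)^N\frac{\alpha^N}{N!}g^{(N)}(x+\alpha)$ into the sum produces the identity with $N$ replaced by $N+1$, completing the induction.

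I do not anticipate any genuine obstacle here: the argument is entirely routine, and the only points requiring attention are the consistent use of the substitution $s=x+t\alpha$ so that the remainder integral matches the definition of $b_N$, the bookkeeping of the sign $(-1)^N$, and the (harmless) smoothness requirement on $g$. As an alternative, one can read the identity directly off Taylor's theorem with integral remainder expanded about the point $x+\alpha$, followed by the substitutions $s=x+(1-t)\alpha$ and then $t\mapsto 1-t$; I would note this in a single sentence.
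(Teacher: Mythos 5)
Your proof is correct and follows essentially the same route as the paper: both rest on the single integration by parts giving the recursion $b_N=\frac{\alpha^N}{N!}g^{(N)}(x+\alpha)-b_{N+1}$ together with the base identity $b_1=g(x+\alpha)-g(x)$; the paper merely writes the recursion in the downward direction and telescopes, while you phrase it as induction on $N$.
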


 \begin{proof}
 If $j>1$, we have
 \[
 b_j=\frac{\alpha^{j-1}}{(j-1)!}\int_0^1 t^{j-1} d \left(g^{(j-1)}(x+t\alpha)\right)=\frac{\alpha^{j-1}}{(j-1)!}g^{(j-1)}(x+\alpha)-b_{j-1},
 \]
 and
 \[
 b_1=g(x+\alpha)-g(x).
 \]
 The lemma is proved.
 \end{proof}

Let $c\leq \lambda_1=\inf \sigma(2,k)$  and $\alpha>0$. Let $L=\sqrt{\Delta-c}$. Then by the above lemma  and functional calculus, we have
\begin{equation}\label{functioncalculus}
f(L)= g(\Delta -c)=A_N+B_N
\end{equation}
for any positive integer $N$,
  where
  \begin{align}\label{46}
  \begin{split}
  &A_N=\sum_{j=0}^{N-1} (-1)^j\frac{\alpha^j}{j!} g^{(j)}(\Delta  -c+\alpha);\\
  & B_N=\frac{(-1)^N\alpha^{N}}{(N-1)!}\int_0^1 g^{(N)}(\Delta  -c+t\alpha)t^{N-1} dt.
  \end{split}
    \end{align}

\begin{bibdiv}

\begin{biblist}

\bib{CharJFA}{article}{
   author={Charalambous, Nelia},
   title={On the $L^p$ independence of the spectrum of the Hodge
   Laplacian on non-compact manifolds},
   journal={J. Funct. Anal.},
   volume={224},
   date={2005},
   number={1},
   pages={22--48},
   issn={0022-1236},
   review={\MR{2139103 (2006e:58044)}},
   doi={10.1016/j.jfa.2004.11.003},
}

\bib{Char2}{article}{
   author={Charalambous, Nelia},
   title={On the equivalence of heat kernel estimates and logarithmic
   Sobolev inequalities for the Hodge Laplacian},
   journal={J. Differential Equations},
   volume={233},
   date={2007},
   number={1},
   pages={291--312},
   issn={0022-0396},
   review={\MR{2290281}},
   doi={10.1016/j.jde.2006.10.007},
}

\bib{ChGr}{article}{
   author={Charalambous, Nelia},
   author={Gro\ss e, Nadine},
   title={On the $L^p$ Spectrum of the Dirac Operator},
   journal={J. Geom. Anal.},
   volume={33},
   date={2023},
   number={2},
   pages={44},
   issn={1050-6926},
   review={\MR{4523513}},
   doi={10.1007/s12220-022-01102-y},
}

\bib{Cha-Lu-2}{article}{
   author={Charalambous, Nelia},
   author={Lu, Zhiqin},
   title={The spectrum of continuously perturbed operators and the Laplacian
   on forms},
   journal={Differential Geom. Appl.},
   volume={65},
   date={2019},
   pages={227--240},
   issn={0926-2245},
   review={\MR{3948872}},
   doi={10.1016/j.difgeo.2019.05.002},
}

\bib{ChLu5}{article}{
   author={Charalambous, Nelia},
   author={Lu, Zhiqin},
   title={The spectrum of the Laplacian on forms over flat manifolds},
   journal={Math. Z.},
   volume={296},
   date={2020},
   number={1-2},
   pages={1--12},
   issn={0025-5874},
   review={\MR{4140728}},
   doi={10.1007/s00209-019-02407-5},
}

\bib{ChLu6}{unpublished}{
   author={Charalambous, Nelia},
   author={Lu, Zhiqin},
   title={Connected essential spectrum: the case of differential forms},
   note={arXiv:2106.01992, To appear in Israel Journal of Mathematics},
}

\bib{ChRo}{unpublished}{
   author={Charalambous, Nelia},
   author={Rowlett, Julie},
   title={The Laplace spectrum on conformally compact manifolds},
   note={arXiv:2306.09291, To appear in Transactions of the AMS},
}

\bib{CLY}{article}{
   author={Cheng, Siu Yuen},
   author={Li, Peter},
   author={Yau, Shing Tung},
   title={On the upper estimate of the heat kernel of a complete Riemannian
   manifold},
   journal={Amer. J. Math.},
   volume={103},
   date={1981},
   number={5},
   pages={1021--1063},
   issn={0002-9327},
   review={\MR{630777}},
   doi={10.2307/2374257},
}

\bib{Davies}{book}{
   author={Davies, E. B.},
   title={Heat kernels and spectral theory},
   series={Cambridge Tracts in Mathematics},
   volume={92},
   publisher={Cambridge University Press},
   place={Cambridge},
   date={1989},
   pages={x+197},
}

\bib{DST}{article}{
   author={Davies, E. B.},
   author={Simon, Barry},
   author={Taylor, Michael},
   title={$L^p$ spectral theory of Kleinian groups},
   journal={J. Funct. Anal.},
   volume={78},
   date={1988},
   number={1},
   pages={116--136},
   issn={0022-1236},
}

\bib{Don2}{article}{
   author={Donnelly, Harold},
   title={The differential form spectrum of hyperbolic space},
   journal={Manuscripta Math.},
   volume={33},
   date={1980/81},
   number={3-4},
   pages={365--385},
}

\bib{DLi}{article}{
   author={Donnelly, Harold},
   author={Li, Peter},
   title={Lower bounds for the eigenvalues of Riemannian manifolds},
   journal={Michigan Math. J.},
   volume={29},
   date={1982},
   number={2},
   pages={149--161},
   issn={0026-2285},
   review={\MR{654476}},
}

\bib{GalMey75}{article}{
   author={Gallot, S.},
   author={Meyer, D.},
   title={Op\'erateur de courbure et laplacien des formes diff\'erentielles
   d'une vari\'et\'e riemannienne},
   language={French},
   journal={J. Math. Pures Appl. (9)},
   volume={54},
   date={1975},
   number={3},
   pages={259--284},
}

\bib{GrHeat}{book}{
   author={Grigor'yan, Alexander},
   title={Heat kernel and analysis on manifolds},
   series={AMS/IP Studies in Advanced Mathematics},
   volume={47},
   publisher={American Mathematical Society, Providence, RI; International
   Press, Boston, MA},
   date={2009},
   pages={xviii+482},
   isbn={978-0-8218-4935-4},
   review={\MR{2569498}},
   doi={10.1090/amsip/047},
}

\bib{Har}{book}{
   author={Hartman, Philip},
   title={Ordinary differential equations},
   publisher={John Wiley \& Sons, Inc., New York-London-Sydney},
   date={1964},
   pages={xiv+612},
   review={\MR{0171038}},
}
	
\bib{HV1}{article}{
   author={Hempel, Rainer},
   author={Voigt, J{\"u}rgen},
   title={The spectrum of a Schr\"odinger operator in $L_p({\bf R}^\nu)$ is $p$-independent},
   journal={Comm. Math. Phys.},
   volume={104},
   date={1986},
   number={2},
   pages={243--250},
   issn={0010-3616},
   review={\MR{836002 (87h:35247)}},
}

\bib{HV2}{article}{
   author={Hempel, Rainer},
   author={Voigt, J{\"u}rgen},
   title={On the $L_p$-spectrum of Schr\"odinger operators},
   journal={J. Math. Anal. Appl.},
   volume={121},
   date={1987},
   number={1},
   pages={138--159},
   issn={0022-247X},
   review={\MR{869525 (88i:35114)}},
   doi={10.1016/0022-247X(87)90244-7},
}

\bib{HSU}{article}{
   author={Hess, H.},
   author={Schrader, R.},
   author={Uhlenbrock, D. A.},
   title={Domination of semigroups and generalization of Kato's inequality},
   journal={Duke Math. J.},
   volume={44},
   date={1977},
   number={4},
   pages={893--904},
   issn={0012-7094},
   review={\MR{458243}},
}

\bib{Jaf}{unpublished}{
author={Jaffe, Ethan Y.},
title={Wave equation on manifolds and finite speed of propagation},
note={Unpublished manuscript},}

\bib{JW1}{article}{
   author={Ji, Lizhen},
   author={Weber, Andreas},
   title={$L^p$ spectral theory and heat dynamics of locally symmetric
   spaces},
   journal={J. Funct. Anal.},
   volume={258},
   date={2010},
   number={4},
   pages={1121--1139},
   issn={0022-1236},
   review={\MR{2565835}},
   doi={10.1016/j.jfa.2009.11.011},
}

\bib{JW2}{article}{
   author={Ji, Lizhen},
   author={Weber, Andreas},
   title={The $L^p$ spectrum and heat dynamics of locally symmetric spaces
   of higher rank},
   journal={Ergodic Theory Dynam. Systems},
   volume={35},
   date={2015},
   number={5},
   pages={1524--1545},
   issn={0143-3857},
   review={\MR{3365732}},
   doi={10.1017/etds.2014.3},
}

\bib{LiPe}{article}{
   author={Liskevich, V. A.},
   author={Perel\cprime muter, M. A.},
   title={Analyticity of sub-Markovian semigroups},
   journal={Proc. Amer. Math. Soc.},
   volume={123},
   date={1995},
   number={4},
   pages={1097--1104},
   issn={0002-9939},
   review={\MR{1224619}},
   doi={10.2307/2160706},
}

\bib{LR}{article}{
   author={Lohou{\'e}, No{\"e}l},
   author={Rychener, Thomas},
   title={Die Resolvente von $\Delta $ auf symmetrischen R\"aumen vom
   nichtkompakten Typ},
   language={German},
   journal={Comment. Math. Helv.},
   volume={57},
   date={1982},
   number={3},
   pages={445--468},
   issn={0010-2571},
   review={\MR{689073 (84m:58163)}},
   doi={10.1007/BF02565869},
}

\bib{mazz}{article}{
   author={Mazzeo, Rafe},
   author={Phillips, Ralph S.},
   title={Hodge theory on hyperbolic manifolds},
   journal={Duke Math. J.},
   volume={60},
   date={1990},
   number={2},
   pages={509--559},
   issn={0012-7094},
   review={\MR{1047764}},
}

\bib{RSI}{book}{
   author={Reed, Michael},
   author={Simon, Barry},
   title={Methods of modern mathematical physics. I},
   edition={2},
   note={Functional analysis},
   publisher={Academic Press, Inc. [Harcourt Brace Jovanovich, Publishers],
   New York},
   date={1980},
   pages={xv+400},
   isbn={0-12-585050-6},
   review={\MR{751959}},
}

\bib{ReSi}{book}{
   author={Reed, Michael},
   author={Simon, Barry},
   title={Methods of modern mathematical physics. II. Fourier analysis,
   self-adjointness},
   publisher={Academic Press [Harcourt Brace Jovanovich Publishers]},
   place={New York},
   date={1975},
   pages={xv+361},
   review={\MR{0493420 (58 \#12429b)}},
}

\bib{Ros}{article}{
   author={Rosenberg, Steven},
   title={Semigroup domination and vanishing theorems},
   conference={
      title={Geometry of random motion},
      address={Ithaca, N.Y.},
      date={1987},
   },
   book={
      series={Contemp. Math.},
      volume={73},
      publisher={Amer. Math. Soc., Providence, RI},
   },
   date={1988},
   pages={287--302},
   review={\MR{954646}},
   doi={10.1090/conm/073/954646},
}

\bib{SaCo}{article}{
   author={Saloff-Coste, Laurent},
   title={Uniformly elliptic operators on Riemannian manifolds},
   journal={J. Differential Geom.},
   volume={36},
   date={1992},
   number={2},
   pages={417--450},
   issn={0022-040X},
   review={\MR{1180389 (93m:58122)}},
}

\bib{Sik}{article}{
   author={Sikora, Adam},
   title={Riesz transform, Gaussian bounds and the method of wave equation},
   journal={Math. Z.},
   volume={247},
   date={2004},
   number={3},
   pages={643--662},
   issn={0025-5874},
}

\bib{Sog}{book}{
   author={Sogge, Christopher D.},
   title={Fourier integrals in classical analysis},
   series={Cambridge Tracts in Mathematics},
   volume={105},
   publisher={Cambridge University Press, Cambridge},
   date={1993},
   pages={x+237},
   isbn={0-521-43464-5},
   review={\MR{1205579}},
   doi={10.1017/CBO9780511530029},
}

\bib{StWe}{book}{
   author={Stein, Elias M.},
   author={Weiss, Guido},
   title={Introduction to Fourier analysis on Euclidean spaces},
   note={Princeton Mathematical Series, No. 32},
   publisher={Princeton University Press},
   place={Princeton, N.J.},
   date={1971},
   pages={x+297},
   review={\MR{0304972 (46 \#4102)}},
}
	
\bib{sturm}{article}{
   author={Sturm, Karl-Theodor},
   title={On the $L^p$-spectrum of uniformly elliptic operators on
   Riemannian manifolds},
   journal={J. Funct. Anal.},
   volume={118},
   date={1993},
   number={2},
   pages={442--453},
   issn={0022-1236},
   review={\MR{1250269 (94m:58227)}},
   doi={10.1006/jfan.1993.1150},
}
\bib{tay}{article}{
   author={Taylor, Michael E.},
   title={$L^p$-estimates on functions of the Laplace operator},
   journal={Duke Math. J.},
   volume={58},
   date={1989},
   number={3},
   pages={773--793},
   issn={0012-7094},
   review={\MR{1016445}},
   doi={10.1215/S0012-7094-89-05836-5},
}

\bib{We}{article}{
   author={Weber, Andreas},
   title={A Riemannian manifold with maximal $L^p$ spectrum},
   journal={Arch. Math. (Basel)},
   volume={91},
   date={2008},
   number={3},
   pages={280--283},
   issn={0003-889X},
   review={\MR{2439602}},
   doi={10.1007/s00013-008-2759-9},
}

\end{biblist}

\end{bibdiv}


\end{document}